\documentclass[11pt]{preprint}
\usepackage{difftrees} 
\usepackage[full]{textcomp}
\usepackage[osf]{newtxtext} 
\usepackage[cal=boondoxo]{mathalfa}
\usepackage{colortbl}
\usepackage{tikz,tikz-cd}
\usetikzlibrary{cd} 
\usepackage[all,cmtip]{xy}
\usepackage{comment}

\usepackage{amssymb}
\usepackage{mathtools}
\usepackage{mhenvs}
\usepackage{mhequ} 
\usepackage{mhsymb}
\usepackage{mathtools}
\usepackage{hyperref}
\usepackage{breakurl}
\usepackage{bm}

\usepackage{booktabs}
\usepackage{tikz}
\usepackage{tcolorbox}
\usepackage{mathrsfs}
\usepackage[utf8]{inputenc}
\usepackage{longtable}
\usepackage{wrapfig}
\usepackage{rotating} 
\usepackage{subcaption}
\usepackage{mathrsfs}
\usepackage{epsfig}
\usepackage{microtype}
\usepackage{comment}
\usepackage{wasysym}
\usepackage{centernot}
\usepackage{enumitem}
\usepackage{bm}
\usepackage{stackrel}

\usepackage{graphicx}
\usetikzlibrary{calc}
\usetikzlibrary{decorations}
\usetikzlibrary{positioning}
\usetikzlibrary{shapes}
\usetikzlibrary{external}
\usetikzlibrary{decorations.pathmorphing}
\usetikzlibrary{decorations.text}
\usetikzlibrary{snakes}

\def\geo{{\text{\rm\tiny geo}}}
\def\ngeo{{\text{\rm\tiny non-geo}}}
\def\id{\mathrm{id}}

\tikzstyle{tinydots}=[dash pattern=on \pgflinewidth off \pgflinewidth]
\tikzstyle{superdense}=[dash pattern=on 4pt off 1pt]

\newif\ifdark
\darkfalse

\ifdark

\definecolor{darkred}{rgb}{0.9,0.2,0.2}
\definecolor{darkblue}{rgb}{0.7,0.3,1}
\definecolor{darkgreen}{rgb}{0.1,0.9,0.1}
\definecolor{franck}{rgb}{0,0.8,1}
\definecolor{pagebackground}{rgb}{.15,.21,.18}
\definecolor{pageforeground}{rgb}{.84,.84,.85}
\pagecolor{pagebackground}
\AtBeginDocument{\globalcolor{pageforeground}}
\definecolor{symbols}{rgb}{0,0.7,1}
\colorlet{connection}{red!80!black}
\colorlet{boxcolor}{blue!50}

\else

\definecolor{darkred}{rgb}{0.7,0.1,0.1}
\definecolor{darkblue}{rgb}{0.4,0.1,0.8}
\definecolor{darkgreen}{rgb}{0.1,0.7,0.1}
\definecolor{franck}{rgb}{0,0,1}
\definecolor{pagebackground}{rgb}{1,1,1}
\definecolor{pageforeground}{rgb}{0,0,0}
\colorlet{symbols}{blue!90!black}
\colorlet{connection}{red!30!black}
\colorlet{boxcolor}{blue!50!black}

\fi

\makeatletter
\pgfdeclareshape{crosscircle}
{
	\inheritsavedanchors[from=circle] 
	\inheritanchorborder[from=circle]
	\inheritanchor[from=circle]{north}
	\inheritanchor[from=circle]{north west}
	\inheritanchor[from=circle]{north east}
	\inheritanchor[from=circle]{center}
	\inheritanchor[from=circle]{west}
	\inheritanchor[from=circle]{east}
	\inheritanchor[from=circle]{mid}
	\inheritanchor[from=circle]{mid west}
	\inheritanchor[from=circle]{mid east}
	\inheritanchor[from=circle]{base}
	\inheritanchor[from=circle]{base west}
	\inheritanchor[from=circle]{base east}
	\inheritanchor[from=circle]{south}
	\inheritanchor[from=circle]{south west}
	\inheritanchor[from=circle]{south east}
	\inheritbackgroundpath[from=circle]
	\foregroundpath{
		\centerpoint%
		\pgf@xc=\pgf@x%
		\pgf@yc=\pgf@y%
		\pgfutil@tempdima=\radius%
		\pgfmathsetlength{\pgf@xb}{\pgfkeysvalueof{/pgf/outer xsep}}%
		\pgfmathsetlength{\pgf@yb}{\pgfkeysvalueof{/pgf/outer ysep}}%
		\ifdim\pgf@xb<\pgf@yb%
		\advance\pgfutil@tempdima by-\pgf@yb%
		\else%
		\advance\pgfutil@tempdima by-\pgf@xb%
		\fi%
		\pgfpathmoveto{\pgfpointadd{\pgfqpoint{\pgf@xc}{\pgf@yc}}{\pgfqpoint{-0.707107\pgfutil@tempdima}{-0.707107\pgfutil@tempdima}}}
		\pgfpathlineto{\pgfpointadd{\pgfqpoint{\pgf@xc}{\pgf@yc}}{\pgfqpoint{0.707107\pgfutil@tempdima}{0.707107\pgfutil@tempdima}}}
		\pgfpathmoveto{\pgfpointadd{\pgfqpoint{\pgf@xc}{\pgf@yc}}{\pgfqpoint{-0.707107\pgfutil@tempdima}{0.707107\pgfutil@tempdima}}}
		\pgfpathlineto{\pgfpointadd{\pgfqpoint{\pgf@xc}{\pgf@yc}}{\pgfqpoint{0.707107\pgfutil@tempdima}{-0.707107\pgfutil@tempdima}}}
	}
}
\makeatother

\DeclareMathOperator{\NT}{\nabla{}Trees}
\DeclareMathOperator{\Tw}{Tw}

\DeclareMathOperator{\LA}{LieAdm}
\DeclareMathOperator{\ComMag}{ComMag}
\DeclareMathOperator{\PL}{PreLie}
\DeclareMathOperator{\MC}{MC}
\DeclareMathOperator{\Lie}{Lie}

\tikzset{
	cross/.style={path picture={ 
			\draw[symbols]
			(path picture bounding box.south east) -- (path picture bounding box.north west) (path picture bounding box.south west) -- (path picture bounding box.north east);
	}},
	root/.style={circle,fill=green!50!black,inner sep=0pt, minimum size=1.2mm},
	dot/.style={circle,fill=pageforeground,inner sep=0pt, minimum size=1mm},
	blank/.style={circle,fill=white,inner sep=0pt, minimum size=1mm},
	dotred/.style={circle,fill=pageforeground!50!pagebackground,inner sep=0pt, minimum size=2mm},
	var1/.style={circle,fill=pagebackground,draw=pageforeground,inner sep=0pt, minimum size=2mm},
	var/.style={circle,fill=pageforeground!10!pagebackground,draw=pageforeground,inner sep=0pt, minimum size=3mm},
	sqvar/.style={rectangle,fill=pageforeground!10!pagebackground,draw=pageforeground,inner sep=0pt, minimum size=3mm},
	kernel/.style={semithick,shorten >=2pt,shorten <=2pt},
	kernels/.style={snake=zigzag,shorten >=2pt,shorten <=2pt,segment amplitude=1pt,segment length=4pt,line before snake=2pt,line after snake=5pt,},
	kernelsbl/.style={snake=zigzag,shorten >=2pt,shorten <=2pt,segment amplitude=1pt,segment length=4pt,line before snake=2pt,line after snake=5pt,bend left=60},
	rho/.style={densely dashed,semithick,shorten >=2pt,shorten <=2pt},
	testfcn/.style={dotted,semithick,shorten >=2pt,shorten <=2pt},
	renorm/.style={shape=circle,fill=pagebackground,inner sep=1pt},
	labl/.style={shape=rectangle,fill=pagebackground,inner sep=1pt},
	xic/.style={very thin,circle,draw=symbols,fill=symbols,inner sep=0pt,minimum size=1.2mm},
	g/.style={very thin,rectangle,draw=symbols,fill=symbols!10!pagebackground,inner sep=0pt,minimum width=2.5mm,minimum height=1.2mm},
	xi/.style={very thin,circle,draw=symbols,fill=symbols!10!pagebackground,inner sep=0pt,minimum size=1.2mm},
	xies/.style={very thin,rectangle,fill=green!50!black!25,draw=symbols,inner sep=0pt,minimum size=1.1mm},
	xiesf/.style={very thin,rectangle,fill=green!50!black,draw=symbols,inner sep=0pt,minimum size=1.1mm},
	xix/.style={very thin,crosscircle,fill=symbols!10!pagebackground,draw=symbols,inner sep=0pt,minimum size=1.2mm},
	X/.style={very thin,cross,rectangle,fill=pagebackground,draw=symbols,inner sep=0pt,minimum size=1.2mm},
	xib/.style={thin,circle,fill=symbols!10!pagebackground,draw=symbols,inner sep=0pt,minimum size=1.6mm},
	xie/.style={thin,circle,fill=green!50!black,draw=symbols,inner sep=0pt,minimum size=1.6mm},
	xid/.style={thin,circle,fill=symbols,draw=symbols,inner sep=0pt,minimum size=1.6mm},
	xibx/.style={thin,crosscircle,fill=symbols!10!pagebackground,draw=symbols,inner sep=0pt,minimum size=1.6mm},
	kernels2/.style={very thick,draw=connection,segment length=12pt},
	keps/.style={thin,draw=symbols,->},
	kepspr/.style={thick,draw=connection,->},
	krho/.style={thin,draw=symbols,superdense,->},
	krhopr/.style={thick,draw=connection,superdense,->},
	triangle/.style = { regular polygon, regular polygon sides=3},
	not/.style={thin,circle,draw=connection,fill=connection,inner sep=0pt,minimum size=0.5mm},
	diff/.style = {very thin,draw=symbols,triangle,fill=red!50!black,inner sep=0pt,minimum size=1.6mm},
	diff1/.style = {very thin,dectriangle={1}{0},fill=red!50!black,draw=symbols,inner sep=0pt,minimum size=1.6mm},
	diff2/.style = {very thin,dectriangle={1}{1},fill=red!50!black,draw=symbols,inner sep=0pt,minimum size=1.6mm},
	diffmini/.style = {very thin,rectangle,fill=black,draw=black,inner sep=0pt,minimum size=0.75mm},
	kernelsmod/.style={very thick,draw=connection,segment length=12pt},
	rec/.style = {very thin,rectangle,fill=black,draw=black,inner sep=0pt,minimum size=2mm},
	cerc/.style={very thin,circle,draw=black,fill=symbols,inner sep=0pt,minimum size=2mm},
	stars/.style={very thin,star,star points=6,star point ratio=0.5, draw=black,fill=red,inner sep=0pt,minimum size=0.7mm},
	>=stealth,
}

\newcommand{\derivatives}{
	\begin{tikzpicture}[scale=0.1,baseline=-2]
		\coordinate (root) at (0,-0.4);
		\coordinate (t1) at (-.8,1.3);
		\coordinate (t2) at (.8,1.3);
		\draw[kernels2] (t1) -- (root);
		\draw[kernels2] (t2) -- (root);
		\node[not] (rootnode) at (root) {};
	\end{tikzpicture}
}

\newcommand{\diffh}{\begin{tikzpicture}[scale=0.2,baseline=-2]
		\coordinate (root) at (0,0);
		\node[diff] (rootnode) at (root) {};
	}
	
	\newcommand{\generic}{\begin{tikzpicture}
			\draw (0,0.6) node[xi] {};
		\end{tikzpicture}
	}
	
	\newcommand{\genericb}{\begin{tikzpicture}
			\draw (0,0.6) node[xic] {};
		\end{tikzpicture}
	}
	
	\newcommand{\IXi}{ \begin{tikzpicture}[scale=0.2,baseline=-2] \draw (0,-0.25) node[not] {} -- (0,1) node[xi] {};
		\end{tikzpicture}
	}
	
	\newcommand{\IiXi}{ \begin{tikzpicture}[scale=0.2,baseline=-2] \draw[kernels2] (0,-0.25) node[not] {} -- (0,1) node[xi] {};
		\end{tikzpicture}
	}

	\newcommand{\Xii}{ \begin{tikzpicture}[scale=0.2,baseline=-2] \draw (-1,-0.25) node[xi] {} -- (0,1) node[xi] {};
		\end{tikzpicture}
	}
	
	\newcommand{\IXitwo}{
		\begin{tikzpicture}[scale=0.2,baseline=-2]
			\draw[kernels2] (0,0) node[not] {} -- (-1,1) node[xi] {};
			\draw[kernels2] (0,0) -- (1,1) node[xi] {};
		\end{tikzpicture}
	}
	
	\newcommand{\Xiii}{\begin{tikzpicture}[scale=0.2,baseline=-2] \draw (0,0) node[xi] {} -- (-1,1) node[xi] {} -- (0,2) node[xi] {};
	\end{tikzpicture}	}
	
	\newcommand{\Xiiia}{\begin{tikzpicture}[scale=0.2,baseline=-2]
			\draw (-1,1)  node[xi] {} -- (0,0); 
			\draw (0,0) node[xi] {}  -- (1,1) node[xi] {};
		\end{tikzpicture}
	}
	
	\newcommand{\IiIXiiib}{
		\begin{tikzpicture}[scale=0.2,baseline=-2]
			\draw[kernels2] (0,0) node[not] {} -- (-1,1) ; \draw[kernels2] (0,0)   -- (1,1) node[xi] {} ;
			\draw (-1,1) node[xi] {} -- (0,2) node[xi] {};
		\end{tikzpicture}
	}
	
	\newcommand{\IiIXiiic}{	\begin{tikzpicture}[scale=0.2,baseline=-2]\draw[kernels2] (0,0) node[not] {} -- (-1,1) ; \draw[kernels2] (0,0)   -- (1,1) node[xi] {} ;
			\draw[kernels2] (-1,1) node[not] {} -- (0,2) node[xi] {};
			\draw[kernels2] (-1,1) node[not] {} -- (-2,2) node[xi] {};
	\end{tikzpicture}}
	
	\newcommand{\Xiiii}{\begin{tikzpicture}[scale=0.2,baseline=-2] \draw (0,0) node[xi] {} -- (-1,1) node[xi] {} -- (0,2) node[xi] {} -- (-1,3) node[xi] {};\end{tikzpicture}}
	
	\newcommand{\Xiiiib}{\begin{tikzpicture}[scale=0.2,baseline=-2] \draw(0,1.5) node[xi] {} -- (0,0); \draw (-1,1) node[xi] {} -- (0,0) node[xi] {} -- (1,1) node[xi] {};\end{tikzpicture}}
	
	\newcommand{\Xiiiic}{\begin{tikzpicture}[scale=0.2,baseline=-2] \draw (0,1) -- (0.8,2.2) node[xi] {};\draw (0,-0.25) node[xi] {} -- (0,1) node[xi] {} -- (-0.8,2.2) node[xi] {};\end{tikzpicture}}
	
	\newcommand{\Xiiiie}{\begin{tikzpicture}[scale=0.2,baseline=-2] \draw (0,2) node[xi] {} -- (-1,1) node[xi] {} -- (0,0) node[xi] {} -- (1,1) node[xi] {};\end{tikzpicture}}
	
	\newcommand{\IiIXiii}{\begin{tikzpicture}[scale=0.2,baseline=-2]\draw (0,0) node[xi] {} -- (-1,1) ; 
			\draw[kernels2] (-1,1) node[not] {} -- (0,2) node[xi] {};
			\draw[kernels2] (-1,1) node[not] {} -- (-2,2) node[xi] {};\end{tikzpicture}}
	
	\newcommand{\IiXiiic}{\begin{tikzpicture}[scale=0.2,baseline=-2]\draw[kernels2](0,1.5) node[xi] {} -- (0,0) node[not] {}; \draw (-1,1) node[xi] {} -- (0,0) ; \draw[kernels2] (0,0) -- (1,1) node[xi] {};\end{tikzpicture}}
	
	\newcommand{\IiXiiiia}{\begin{tikzpicture}[scale=0.2,baseline=-2]\draw[kernels2] (0,0) node[not] {} -- (-1,1) ; \draw[kernels2] (0,0) node[not] {} -- (1,1) node[xi] {} ;
			\draw (-1,1) node[xi] {} -- (0,2) node[xi] {} -- (-1,3) node[xi] {};\end{tikzpicture}}
	
	\newcommand{\IiXiiiiacc}{\begin{tikzpicture}[scale=0.2,baseline=-2]\draw[kernels2] (0,0) node[not] {} -- (-1,1) ; \draw[kernels2] (0,0) node[not] {} -- (1,1) node[xic] {} ;
			\draw (-1,1) node[xic] {} -- (0,2) node[xi] {} -- (-1,3) node[xi] {};\end{tikzpicture}}
	
	\newcommand{\IiXiiiib}{\begin{tikzpicture}[scale=0.2,baseline=-2]\draw (0,0) node[xi] {} -- (-1,1) node[xi] {} -- (0,2) ; \draw[kernels2] (0,2) node[not] {} -- (-1,3) node[xi] {};\draw[kernels2] (0,2)  -- (1,3) node[xi] {};
	\end{tikzpicture}	}
	
	\newcommand{\IiXiiiic}{\begin{tikzpicture}[scale=0.2,baseline=-2]\draw (0,0) node[xi] {} -- (-1,1) node[not] {}; \draw[kernels2] (-1,1) -- (0,2) ; 
			\draw[kernels2] (-1,1) -- (-2,2) node[xi] {} ;
			\draw (0,2) node[xi] {} -- (-1,3) node[xi] {};\end{tikzpicture}}

	\newcommand{\Xiiiieabis}{\begin{tikzpicture}[scale=0.2,baseline=-2] \draw (-1,2.5) node[xi] {} -- (-1,1) ; \draw[kernels2] (-1,1) node[xi] {} -- (0,0); 
			\draw (0,0)  -- (1,1) node[xi] {};
			\draw[kernels2] (0,0) node[not] {} -- (0,1.5) node[xi] {}; \end{tikzpicture} }
	
	\newcommand{\Xiiiiea}{\begin{tikzpicture}[scale=0.2,baseline=-2] \draw (-1,2.5) node[xi] {} -- (-1,1) node[xi] {} -- (0,0); 
			\draw[kernels2] (0,0)  -- (1,1) node[xi] {};
			\draw[kernels2] (0,0) node[not] {} -- (0,1.5) node[xi] {}; \end{tikzpicture} }
	
	\newcommand{\Xiiiica}{\begin{tikzpicture}[scale=0.2,baseline=-2] \draw (0,1) -- (-1,2.2) node[xi] {};\draw (0,-0.25) node[xi] {} -- (0,1) ; \draw[kernels2] (0,1) node[not] {} -- (1,2.2) node[xi] {};
			\draw[kernels2] (0,1) {} -- (0,2.7) node[xi] {}; \end{tikzpicture}
	}

	\newcommand{\Xiiiiba}{\begin{tikzpicture}[scale=0.2,baseline=-2] \draw(-0.5,1.5) node[xi] {} -- (0,0); \draw (-1.5,1) node[xi] {} -- (0,0) node[not] {}; \draw[kernels2] (0,0) -- (1.5,1) node[xi] {};
			\draw[kernels2] (0,0) -- (0.5,1.5) node[xi] {} ; \end{tikzpicture}}
	
	\newcommand{\Xiiiicab}{\begin{tikzpicture}[scale=0.2,baseline=-2] \draw (-1,1) -- (-2,2) node[xi] {};\draw[kernels2] (0,0)  -- (-1,1); \draw[kernels2] (0,0) node[not] {} -- (1,1) node[xi] {} ; 
			\draw[kernels2] (-1,1)  {} -- (0,2) node[xi] {};
			\draw[kernels2] (-1,1) node[not] {} -- (-1,2.5) node[xi] {};
		\end{tikzpicture}
	}
	
	\newcommand{\Xiiiieabbis}{\begin{tikzpicture}[scale=0.2,baseline=-2] \draw[kernels2] (-1,2.5) node[xi] {} -- (-1,1) ; \draw[kernels2] (-2,2)  node[xi] {} -- (-1,1) ; \draw[kernels2] (-1,1)  node[not] {} -- (0,0); 
			\draw (0,0)  -- (1,1) node[xi] {};
			\draw[kernels2] (0,0) node[not] {} -- (0,1.5) node[xi] {};
		\end{tikzpicture} 
	}

	\newcommand{\Xiiiieab}{\begin{tikzpicture}[scale=0.2,baseline=-2] \draw[kernels2] (-1,2.5) node[xi] {} -- (-1,1) ; \draw[kernels2] (-2,2)  node[xi] {} -- (-1,1) ; \draw (-1,1)  node[not] {} -- (0,0); 
			\draw[kernels2] (0,0)  -- (1,1) node[xi] {};
			\draw[kernels2] (0,0) node[not] {} -- (0,1.5) node[xi] {}; 
		\end{tikzpicture}
	}

	\newcommand{\iiIiXiiii}{\begin{tikzpicture}[scale=0.2,baseline=-2] \draw[kernels2] (0,0) node[not] {} -- (-1,1) node[not] {};
			\draw[kernels2] (0,0) -- (1,1) node[not] {};
			\draw[kernels2] (-1,1) -- (-1.5,2.5) node[xi] {};
			\draw[kernels2] (-1,1) -- (-0.5,2.5) node[xi] {};
			\draw[kernels2] (1,1) -- (0.5,2.5) node[xi] {};
			\draw[kernels2] (1,1) -- (1.5,2.5) node[xi] {};
		\end{tikzpicture}
	}
	
	\newcommand{\iiIiXiiiiaa}{\begin{tikzpicture}[scale=0.2,baseline=-2] \draw[kernels2] (0,0) node[not] {} -- (-1,1) node[not] {};
			\draw[kernels2] (0,0) -- (1,1) node[not] {};
			\draw[kernels2] (-1,1) -- (-1.5,2.5) node[xic] {};
			\draw[kernels2] (-1,1) -- (-0.5,2.5) node[xic] {};
			\draw[kernels2] (1,1) -- (0.5,2.5) node[xi] {};
			\draw[kernels2] (1,1) -- (1.5,2.5) node[xi] {};
		\end{tikzpicture}
	}
	
	\newcommand{\iiIiXiiiibb}{\begin{tikzpicture}[scale=0.2,baseline=-2] \draw[kernels2] (0,0) node[not] {} -- (-1,1) node[not] {};
			\draw[kernels2] (0,0) -- (1,1) node[not] {};
			\draw[kernels2] (-1,1) -- (-1.5,2.5) node[xic] {};
			\draw[kernels2] (-1,1) -- (-0.5,2.5) node[xi] {};
			\draw[kernels2] (1,1) -- (0.5,2.5) node[xic] {};
			\draw[kernels2] (1,1) -- (1.5,2.5) node[xi] {};
		\end{tikzpicture}
	}

	\newcommand{\Xitwo}
	{\begin{tikzpicture}[scale=0.2,baseline=-2] \draw[kernels2] (0,0) node[not] {} -- (-1,1) node[not] {}
			-- (-2,2) node[not]{} -- (-3,3) node[xi]  {};
			\draw[kernels2] (0,0) -- (1,1) node[xi] {};
			\draw[kernels2] (-1,1) -- (0,2) node[xi] {};
			\draw[kernels2] (-2,2) -- (-1,3) node[xi] {};
	\end{tikzpicture} }
	
	\newcommand{\Xitwoo}
	{\begin{tikzpicture}[scale=0.2,baseline=-2] \draw[kernels2] (0,0) node[not] {} -- (-1,1) node[not] {}
			-- (-2,2) node[not]{} -- (-3,3) node[xic]  {};
			\draw[kernels2] (0,0) -- (1,1) node[xi] {};
			\draw[kernels2] (-1,1) -- (0,2) node[xic] {};
			\draw[kernels2] (-2,2) -- (-1,3) node[xi] {};
	\end{tikzpicture} }
	
	\newcommand{\Xitwooo}
	{\begin{tikzpicture}[scale=0.2,baseline=-2] \draw[kernels2] (0,0) node[not] {} -- (-1,1) node[not] {}
			-- (-2,2) node[not]{} -- (-3,3) node[xic]  {};
			\draw[kernels2] (0,0) -- (1,1) node[xi] {};
			\draw[kernels2] (-1,1) -- (0,2) node[xi] {};
			\draw[kernels2] (-2,2) -- (-1,3) node[xic] {};
	\end{tikzpicture} }
	
	\newcommand{\iiIiXiiiic}{\begin{tikzpicture}[scale=0.2,baseline=-2] \draw[kernels2] (0,0) node[not] {} -- (-1,1);
			\draw[kernels2] (0,0) -- (1,1) node[not] {};
			\draw (-1,1)  node[xi] {} -- (-1,2.5) node[xi] {};
			\draw[kernels2] (1,1) -- (0.4,2.5) node[xi] {};
			\draw[kernels2] (1,1) -- (1.6,2.5) node[xi] {};
		\end{tikzpicture}
	}

	\newcommand{\IiXiiiiac}{ \begin{tikzpicture}[scale=0.2,baseline=-2]\draw[kernels2] (0,0) node[not] {} -- (-1,1) ; \draw[kernels2] (0,0) node[not] {} -- (1,1) node[xi] {}; 
			\draw[kernels2] (-1,1) node[not] {} -- (0,2) ; 
			\draw[kernels2] (-1,1) -- (-2,2) node[xi] {} ;
			\draw (0,2) node[xi] {} -- (-1,3) node[xi] {};
		\end{tikzpicture}
	}

	\newcommand{\IiXiiiibc}{\begin{tikzpicture}[scale=0.2,baseline=-2] \draw (0,0) node[xi] {} -- (-1,1) node[not] {}; \draw[kernels2] (-1,1) -- (0,2) ; 
			\draw[kernels2] (-1,1) -- (-2,2) node[xi] {} ; \draw[kernels2] (0,2) node[not] {} -- (-1,3) node[xi] {};\draw[kernels2] (0,2)  -- (1,3) node[xi] {};
		\end{tikzpicture}
	}

	\newcommand{\IiXiiiiab}{\begin{tikzpicture}[scale=0.2,baseline=-2] \draw[kernels2] (0,0) node[not] {} -- (-1,1) ; \draw[kernels2] (0,0) node[not] {} -- (1,1) node[xi] {};\draw (-1,1) node[xi] {} -- (0,2) ; \draw[kernels2] (0,2) node[not] {} -- (-1,3) node[xi] {};\draw[kernels2] (0,2)  -- (1,3) node[xi] {}; 
	\end{tikzpicture}}

	\newcommand{\Xiiiicb}{\begin{tikzpicture}[scale=0.2,baseline=-2] \draw (-1,1) -- (-2,2) node[xi] {};\draw[kernels2] (0,0)  -- (-1,1) node[xi] {} ; \draw[kernels2] (0,0) node[not] {} -- (1,1) node[xi] {} ; 
			\draw (-1,1) node[xi] {} -- (0,2) node[xi] {}; \end{tikzpicture}}
	
	\newcommand{\Xiiiieb}{
		\begin{tikzpicture}[scale=0.2,baseline=-2] 	\draw[kernels2] (0,2) node[xi] {} -- (-1,1) ; \draw[kernels2] (-2,2)  node[xi] {} -- (-1,1) ; \draw (-1,1)  node[not] {} -- (0,0); 
			\draw (0,0) node[xi] {}  -- (1,1) node[xi] {};
		\end{tikzpicture}
	}
	
	\newcommand{\iiIiXiiiib}{\begin{tikzpicture}[scale=0.2,baseline=-2] \draw[kernels2] (0,0) node[not] {} -- (-1,1) ;
			\draw[kernels2] (0,0) -- (1,1);
			\draw (-1,1) node[xi] {} -- (-1,2.5) node[xi] {};
			\draw (1,1)  node[xi] {} -- (1,2.5) node[xi] {};
		\end{tikzpicture}
	}

\begin{document}
	\def\st{\mathsf{fgt}}
	\def\mail#1{\burlalt{#1}{mailto:#1}}
	\title{Chain rule symmetry for singular SPDEs from multi-indices to decorated trees}
	\author{Yvain Bruned}
	\institute{ 
		Université de Lorraine, CNRS, IECL, F-54000 Nancy, France
		\\
		Email:\ \begin{minipage}[t]{\linewidth}
			\mail{yvain.bruned@univ-lorraine.fr}
	\end{minipage}}
	\def\dsqcup{\sqcup\mathchoice{\mkern-7mu}{\mkern-7mu}{\mkern-3.2mu}{\mkern-3.8mu}\sqcup}

	\maketitle

	\begin{abstract}
		
		\ \ \ \ In these lecture notes, we review the recent results around the chain rule for the generalised KPZ equation and the geometric KPZ equation. We introduce the main ideas of Regularity Structures via a B-series type expansion and the two main combinatorial sets for encoding this expansion: decorated trees and multi-indices. Then, we explain how the chain rule was first understood via integration by parts and diagrammatic computations. 
	For the generalised KPZ equation,  a scalar-valued equation, we use multi-indices to derive a characterisation in terms of iterations of covariant derivatives. The same characterisation for decorated trees is more involved and requires the use of operadic and homological tools. We summarise the main arguments of this proof. 
	{\scriptsize \\
		\textit{Keywords}: Chain rule Symmetry, Decorated trees, Geometric KPZ equation, Multi-indices, Operads, Regularity Structures. 
		\\
		\textit{MSC classification}: 60H17,
		60L30, 60L70,
		18M80.
		} 
	\end{abstract}

	\setcounter{tocdepth}{1}
	\tableofcontents

	\section{ Introduction }
		

		

		In these lecture notes, we focus on the following singular stochastic partial differential equation (SPDE):
		\begin{equation} \label{equ_gKPZ}
			\partial_t u = \partial_x^2 u + f(u) (\partial_x u)^2 + g(u) \xi, \quad (t,x) \in \mathbb{R}_+ \times \mathbb{T}
		\end{equation}
		where $ \mathbb{T} $ is the one-dimensional torus, $ \xi $ is a  space-time noise, a space-time random distribution. We will consider noises that belong to the space-time Hölder space $\mathcal{C}^{-2+ \kappa}$ with $\kappa > 0$. Here, one uses the parabolic scaling $\mathfrak{s} = (2,1)$ and work with the anisotropic norm: 
		\begin{equation*}
			\Vert (t,x) \Vert_{\mathfrak{s}} = |x| + \sqrt{|t|}.
		\end{equation*}
		The exponent $-2$ is critical and one cannot work with noises with regularity below this threshold. A main example of a subcritical noise is the space-time white noise which is a Gaussian process whose covariance is formally described by
		\begin{equation*}
			\mathbb{E}(\xi(t,x) \xi(s,y)) = \delta(t-s) \delta(x-y)
		\end{equation*}
		where $ \delta(x) $ is equal to one when $x=0$ and zero otherwise. This means that one has independence between space-time points. The trajectories of this noise belong to $\mathcal{C}^{-\frac{3}{2}- \kappa}$ for $\kappa > 0$.
		Equation \eqref{equ_gKPZ} is called the generalised KPZ equation as if one takes $f=g=1$, one recovers the KPZ equation. This equation models one dimensional random growing interfaces.  This generalisation is built such that one expects to have the chain rule in the sense that if $u$ solves \eqref{equ_gKPZ} then for some diffeomorphism $\varphi$, $\varphi(u)$ solves the same type of equation but with different coefficients $f$ and $g$.
		
		One can write a vector-valued extension of \eqref{equ_gKPZ} called geometric KPZ equation given below
		\begin{equation}
			\label{geo_KPZ}
			\partial_t u^{\alpha} = \partial_x^2 u^{\alpha} + \Gamma_{\beta \gamma}^{\alpha}(u) \partial_x u^{\beta} \partial_x u^{\gamma} + \sigma_{i}^{\alpha}(u) \xi_i, \quad (t,x) \in \mathbb{R}_+ \times \mathbb{T}
		\end{equation}
		where $ \alpha, \beta, \gamma \in \{  1,...,d \} $, $ i\in \{  1,...,n \} $. We have used Einstein notation for the indices omitting the sum over the indices $ \beta, \gamma $ and $i$. 
		The $\xi_i$ are independent space-time noises.
		We suppose that $\Gamma_{\beta \gamma}^{\alpha}(u)$ and  $ \sigma_{i}^{\alpha}(u)$ are smooth functions in $u$ such that $ \Gamma^{\alpha}_{\beta \gamma} = \Gamma^{\alpha}_{ \gamma \beta} $. This system has a geometric interpretation as it corresponds in local coordinates to a natural stochastic evolution taking values in loops on a compact Riemannian manifold. It was first considered for coloured noises in \cite{Fun92} and singular noises in \cite{proc,BGHZ,BD24}. The $ \Gamma$ are the Christoffel symbols and the $\sigma_i$ are the smooth vector fields connected to the metric $ g$. One has $ g^{\alpha \beta} = \sigma_i^{\alpha} \sigma_i^{\beta} $. One main property of the equation \eqref{geo_KPZ} is the invariance under change of coordinates that we call the chain rule symmetry.
		
		Both equations \eqref{equ_gKPZ} and \eqref{geo_KPZ} contain singular distributional products and one has to modify such equations by subtracting counter-terms. One wants to guarantee that the chain rule symmetry will not break with such a renormalisation procedure. The main aim of this course is to study this symmetry. The course is divided into three parts:
		\begin{enumerate}
			\item Regularity Structures, decorated trees and multi-indices. In this section, we recall the main ideas behind the different methods for solving singular SPDEs. Then, we focus on Regularity Structures by presenting the B-series perspective and the main combinatorial sets for encoding this expansion are decorated trees and multi-indices. 
			\item Cancellations via integration by parts. We introduce graphical rules fully justified by an important identity on the heat kernel $K$. Then, using integration by parts one observes compensations between renormalisation constants. It was the first attempt at obtaining the chain rule.
			\item Characterisation of the chain rule symmetry for multi-indices and decorated trees. In this section, we review the proofs of the characterisation of the chain rule given in \cite{BD24,CBB}. This characterisation is based on iterated covariant derivatives. The multi-indices case in \cite{BD24} relies on elementary arguments whereas the high-dimensional case uses operadic and homological tools such as the operadic twisting.
		\end{enumerate}
		\subsection*{Acknowledgments}
		
		{\small
			These lecture notes are based on a series of lectures given in the YMCN Spring School: Recent Advances in SPDEs, 24-28 March 2025 at the
			University of Münster in Germany. Y.B. thanks the organisers of this school, Fabian Höfer,
			Sarah-Jean Meyer and
			Xiaohao Ji for the opportunity to give this lecture in front of such a large audience. 
			Y.B. acknowledges funding support from the European Research Council (ERC) through the ERC Starting Grant Low Regularity Dynamics via Decorated Trees (LoRDeT), grant agreement No.\ 101075208 is acknowledged. Views and opinions expressed are however those of the author only and do not necessarily reflect those of the European Union or the European Research Council. Neither the European Union nor the granting authority can be held responsible for them. 
		}

		\section{Regularity Structures, decorated trees and multi-indices.}
		
		Let us summarise the main ideas for solving the singular equations \eqref{equ_gKPZ} and \eqref{geo_KPZ}. One starts with a \textbf{regularisation step} by considering a mollifier $  \varrho$, a smooth compactly supported function symmetric in space 
		\begin{equation} \label{sym_rho}
			\varrho(t,-x) = \varrho(t,x) 
		\end{equation}
		and rescaled according to the parabolic scaling for every $ \varepsilon > 0 $
		$$  \varrho_{\varepsilon}(t,x) = \varepsilon^{-3} \varrho(\varepsilon^{-1} t, \varepsilon^{-2}x).  $$
		Then, one replaces the rough noise $ \xi $ by a smooth one $ \xi_{\varepsilon} = \varrho_{\varepsilon} * \xi $ where $ * $ is the space-time convolution and is such that $ \xi_{\varepsilon} $ converges to $ \xi $ when $\varepsilon$ tends to zero.
		The symmetric condition \eqref{sym_rho} is crucial in the sequel for studying the chain rule symmetry as it allows one to disregard terms that break the symmetry.
		One considers $u_{\varepsilon}$ the solution of the mollified equation
		\begin{equation}
			\partial_t u_{\varepsilon} = \partial_x^2 u_{\varepsilon} + f(u_{\varepsilon}) (\partial_x u_{\varepsilon})^2 + g(u_{\varepsilon}) \xi_{\varepsilon} + F(u_{\varepsilon}, \partial_x u_{\varepsilon})
		\end{equation}
		where $ F(u_{\varepsilon}, \partial_x u_{\varepsilon}) $ is a counter-term to renormalise the ill-defined distributional products in the equation. Here, we assume locality of the counter-terms in the solution and its derivatives.
		Let us briefly explain  the \textbf{distributional products} of \eqref{equ_gKPZ} which are $ f(u) (\partial_x u)^2 $ and $ g(u) \xi $.
		By  a scaling argument one can show that
		\begin{equation*}
			\text{Regularity of } u    = \text{Regularity of } v
		\end{equation*}
		where $v$ solves the linear stochastic heat equation. One has
		\begin{equation*}
			\partial_t v = \partial_x^2 v + \xi, \quad v= K * \xi.
		\end{equation*}
		Here, one has an explicit well-defined expression for $v$ given by the space-time convolution of the heat kernel with the noise $\xi$. One can make sense of this non-local product called stochastic convolution and show that $ v \in \mathcal{C}^{\frac{1}{2} - \kappa} $ when $\xi$ is the space-time white noise. In general, one gets a $+2$ gain in Hölder regularity provided by the Schauder estimate for the convolution with the heat kernel. Then, one expects $u$  not to be differentiable and therefore $ \partial_x u $ is a distribution and one cannot define the square of this object. The product $g(u) \xi$ is also ill-defined as the regularity of the noise is $ -\frac{3}{2} - \kappa $ and therefore the sum of the Hölder exponents of both $u$ and $\xi$ is negative. 
		
		With the regularisation, one gets a solution map that depends on the smooth noise $ \xi_{\varepsilon}  $. Unfortunately, this map is not continuous and one has to add extra stochastic data to obtain continuity. This extra data is an ansatz for the solution.  One has
		\begin{equation*}
			u_{\varepsilon} = \Phi( \xi_{\varepsilon}, \text{Ansatz} ), \quad \Phi \, \, \text{  continuous}.
		\end{equation*}
		Several  ansatz have been proposed for constructing this continuous solution map: 
		\begin{itemize}
			\item Regularity Structures (see \cite{reg}), inspired by rough paths (see \cite{Lyons, Gub04, Gub10}), proposes a local expansion of the solution via new monomials. Then, from this local description, one gets a global distribution via a reconstruction theorem.  A large class of subcritical equations is covered in a series of works \cite{reg,BHZ,CH,BCCH} where the renormalisation used is inspired by the BPHZ renormalisation for Feynman diagrams given in \cite{BP57,KH69,WZ69}. One can find nice overviews on Regularity Structures in \cite{FrizHai,BH20}.
			\item Paracontrolled calculus uses the Littlewood-Paley decomposition  to single out the singular part of distributional products. The ansatz is written via iterated paraproducts. It was initiated in \cite{GIP} with higher-order computations given in \cite{BB19}. A general ansatz is given in \cite{BM26} with the convergence of the stochastic data reaching the degree of generality given by Regularity Structures.
			\item The flow approach by introducing a scale $ \mu$ on the kernel $K$ replaced by $  K_{\mu}$. Then, the idea is to perform a flow on this scale and solve the equation scale by scale. One has two options: discrete scales (Renormalisation group see \cite{K16}) or continuous scales (Polchinski's flow see \cite{P84,Duc21}). The most successful approach is the continuous one which covers a family of subcritical equations in \cite{Duc21} and the generalised KPZ equation in \cite{CF24a}. A general ansatz is provided in \cite{BM25} where one also proves that the renormalisation in the flow approach corresponds to the BPHZ renormalisation.
		\end{itemize}
		All these ansatz can be described in the same way using a B-series formalism.
		This formalism was introduced by Butcher for encoding Runge-Kutta schemes via tree series.
		One has
		\begin{equation*}
			u_{\varepsilon} = \text{Finite sum of stochastic iterated integrals} + \text{Remainder}.
		\end{equation*}
		One has to perform analytical constructions on the stochastic iterated integrals depending on the theory: recentering around a base point for Regularity Structures, Fourier decomposition for Paracontrolled calculus and insertion of the scale $\mu$ for the flow approach. The B-series formalism corresponds to what these various approaches have in common that is the combinatorial set and the Hopf algebraic structures for encoding the stochastic iterated integrals. In this lecture, we will briefly review  the Regularity Structures ansatz.

		The ansatz is given by a Taylor-type expansion around the base point $z$
		\begin{equation}
			u_{\varepsilon}(\bar z) = \sum_{\tau \in \hat{\mathcal{T}}} \frac{\hat{F}[\tau]}{S(\tau)}(u_{\varepsilon}(z), \partial_x u_{\varepsilon}(z))
			(\Pi_{z} \tau )(\bar{z}) + R_{\hat{\mathcal{T}},z}(\bar{z})
		\end{equation}
		where 
		\begin{itemize}
			\item $\hat{\mathcal{T}}$ is a combinatorial set. We will use multi-indices for the generalised KPZ equation and decorated trees for the geometric equation.
			\item $ S(\tau) $ is a symmetry factor and $ F[\tau](u_{\varepsilon}(z), \partial_x u_{\varepsilon}(z)) $ are elementary differentials, which are the coefficients of the Taylor expansion.
			\item The $ (\Pi_z \tau)(\bar{z}) $ are the new monomials that satisfy
			\begin{equation*}
				(\Pi_z \tau)(\bar{z}) \lesssim \Vert \bar{z} - z  \Vert_{\mathfrak{s}}^{\mathrm{deg}(\tau)}
			\end{equation*}
			where $ \mathrm{deg} : \hat{\mathcal{T}} \rightarrow \mathbb{R} $ associates to each tree a scalar not necessarily an integer. We will give a precise definition below.
			\item $R_{\hat{ \mathcal{T}},z}(\bar{z})$ is a Taylor remainder such that there exists $ \gamma $ with $ \gamma \geq \mathrm{deg}(\tau) $ for all $ \tau \in \hat{\mathcal{T}} $ and 
			\begin{equation*}
				R_{\hat{\mathcal{T}},z}(\bar{z}) \lesssim 
				\Vert \bar{z} - z \Vert_{\mathfrak{s}}^{\gamma}.
			\end{equation*}
		\end{itemize}	
		One can write a similar ansatz for the non-linearity of the equation:
		\begin{equation} \label{B_series_nonlinearity}
			(f(u_{\varepsilon}) (\partial_x u_{\varepsilon})^2 + g(u_{\varepsilon}) \xi_{\varepsilon} )(\bar{z}) = \sum_{\tau \in \mathcal{T}} \frac{F[\tau]}{S(\tau)}(u_{\varepsilon}(z), \partial_x u_{\varepsilon}(z))
			(\Pi_{z} \tau )(\bar{z}) + R_{\mathcal{T},z}(\bar{z})
		\end{equation}
		The main difference is the combinatorial set $\mathcal{T}$ that replaces $ \hat{\mathcal{T}} $. The idea of two different B-series within Regularity Structures is highlighted in \cite{B23}.
		
		The idea behind such an ansatz is to start a perturbative expansion.	One uses decorated trees to encode the first stochastic iterated integrals
		\begin{equation*}
			\xi_{\varepsilon} \equiv \generic, \quad K * \xi_{\varepsilon} \equiv \IXi, \quad \partial_x K * \xi_{\varepsilon} \equiv \IiXi 
		\end{equation*}
		where the node $ \generic $ encodes the space-time noise, a thin edge denotes a convolution with $K$ and a thick edged denotes a convolution with $\partial_x K$. Then, the merging root product denoted by $\cdot_r$, which merges the roots of two decorated trees, corresponds to the pointwise product of the integrals. We illustrate this below with the following examples:
		\begin{equation*}
			\xi_{\varepsilon} \, K * \xi_{\varepsilon}  \equiv \generic \cdot_r   \IXi =  \Xii, \quad  (\partial_x K * \xi_{\varepsilon})^{2} \equiv   \IiXi \cdot_r \IiXi =  \IXitwo.
		\end{equation*}
		As one cannot multiply noises, the merging root product is zero when such a configuration occurs. Let us explain how to recenter. We first define the degree map $ \mathrm{deg}$ that takes into account the different regularities of the objects:
		\begin{equation*}
			\mathrm{deg}(\generic) = \alpha, \quad  \mathrm{deg}( \begin{tikzpicture}[scale=0.2,baseline=-2] \draw (0,-0.25) node[] {} -- (0,1) node[] {};
			\end{tikzpicture}) = 2, \quad  \mathrm{deg}( \begin{tikzpicture}[scale=0.2,baseline=-2] \draw[kernels2] (0,-0.25) node[] {} -- (0,1) node[] {};
			\end{tikzpicture}) =1, 
		\end{equation*}
		where we assume that 
		$ \xi \in \mathcal{C}^\alpha $ with $ \alpha > -2 $, and the degree of the decorated edges corresponds to the Schauder estimates ($+2$ for $K$ and $+1$ for $\partial_x K$). Then, the degree of a decorated tree is the sum of the degrees of its edges and nodes. As an example, one has
		\begin{equation*}
			\begin{aligned}
				\mathrm{deg}( \Xii ) & = 2 \mathrm{deg}(\generic)   +  \mathrm{deg}( \begin{tikzpicture}[scale=0.2,baseline=-2] \draw (0,-0.25) node[] {} -- (0,1) node[] {};
				\end{tikzpicture}) = 2 \alpha + 2, 
				\\  
				\mathrm{deg}(\IXitwo) &= 2 \mathrm{deg}(\generic) + 2 \mathrm{deg}( \begin{tikzpicture}[scale=0.2,baseline=-2] \draw[kernels2] (0,-0.25) node[] {} -- (0,1) node[] {};
				\end{tikzpicture})  = 2 \alpha + 2.
			\end{aligned} 
		\end{equation*}
		We introduce monomials of the form $ X^{k} = X_1^{k_1} X_2^{k_2} $ with $ k = (k_1,k_2) \in  \mathbb{N}^2 $ and we set
		\begin{equation*}
			|k|_{\mathfrak{s}} = 2 k_1 + k_2.
		\end{equation*}
		Then, the set of decorated trees $ \mathcal{T} $ is described by the following inductive set
		\begin{equation*}
			\begin{aligned}
				\mathcal{T} & = \{ 
				\begin{tikzpicture}[scale=0.4,baseline=-2]
					\draw (0,0) node[not,label= {[label distance=-0.2em]below: \tiny  $  k   $} ] {};
				\end{tikzpicture}, 
				\begin{tikzpicture}[scale=0.4,baseline=-2]
					\draw (0,0) node[xi,label= {[label distance=-0.2em]below: \tiny  $  k   $} ] {};
				\end{tikzpicture},  \begin{tikzpicture}[scale=0.4,baseline=-2]
					\coordinate (root) at (0,-0.4);
					\coordinate (t2) at (-0.8,0.5);
					\coordinate (t3) at (0.8,0.5);
					\draw[] (t2) -- (root);
					\draw[] (t3) -- (root);
					\draw (0,-0.4) node[xi,label= {[label distance=-0.2em]below: \tiny  $  k   $} ] {};
					\draw (-0.9,0.7) node[] {\tiny$\tau_1$};
					\draw (0.9,0.7) node[] {\tiny$\tau_n$};
					\draw (0,0.7) node[] {\tiny$\cdots$};
				\end{tikzpicture}, \begin{tikzpicture}[scale=0.4,baseline=-2]
					\coordinate (root) at (0,-0.4);
					\coordinate (t2) at (-0.8,0.5);
					\coordinate (t3) at (0.8,0.5);
					\draw[] (t2) -- (root);
					\draw[] (t3) -- (root);
					\draw (0,-0.4) node[not,label= {[label distance=-0.2em]below: \tiny  $  k   $} ] {};
					\draw (-0.9,0.7) node[] {\tiny$\tau_1$};
					\draw (0.9,0.7) node[] {\tiny$\tau_n$};
					\draw (0,0.7) node[] {\tiny$\cdots$};
				\end{tikzpicture},   \begin{tikzpicture}[scale=0.4,baseline=-2]
					\coordinate (root) at (0,-0.4);
					\coordinate (t1) at (-0.3,0.5);
					\coordinate (t2) at (-1.1,0.5);
					\coordinate (t3) at (1.1,0.5);
					\coordinate (t4) at (1.1,0.5);
					\coordinate (tau1) at (1,0.6);
					\draw[] (t1) -- (root);
					\draw[kernels2] (t2) -- (root);
					\draw[] (t3) -- (root);
					\node[not,label= {[label distance=-0.2em]below: \tiny  $  k   $}] (rootnode) at (root) {};
					\draw (-1.2,0.7) node[] {\tiny$\tau_1$};
					\node[not] (rootnode) at (root) {};
					\draw (1.3,0.7) node[] {\tiny$\tau_n$};
					\draw (-0.4,0.7) node[] {\tiny$\tau_{\tiny{2}}$};
					\draw (0.5,0.7) node[] {\tiny$\cdots$};
				\end{tikzpicture},  \begin{tikzpicture}[scale=0.4,baseline=-2]
					\coordinate (root) at (0,-0.4);
					\coordinate (t1) at (-0.3,0.5);
					\coordinate (t2) at (-1.1,0.5);
					\coordinate (t3) at (1.1,0.5);
					\coordinate (t4) at (1.1,0.5);
					\coordinate (tau1) at (1,0.6);
					\draw[kernels2] (t1) -- (root);
					\draw[kernels2] (t2) -- (root);
					\draw[] (t3) -- (root);
					\node[not,label= {[label distance=-0.2em]below: \tiny  $  k   $}] (rootnode) at (root) {};
					\draw (-1.2,0.7) node[] {\tiny$\tau_1$};
					\node[not] (rootnode) at (root) {};
					\draw (1.3,0.7) node[] {\tiny$\tau_m$};
					\draw (-0.4,0.7) node[] {\tiny$\tau_{\tiny{2}}$};
					\draw (0.5,0.7) node[] {\tiny$\cdots$};
				\end{tikzpicture}, \\ & m,n \in \mathbb{N}^*, m \geq 2,  \tau_i \in \mathcal{T}, k \in \mathbb{N}^{2} \}
			\end{aligned}
		\end{equation*}
		In practice, one uses the identification $ X^k = \begin{tikzpicture}[scale=0.4,baseline=-2]
			\draw (0,0) node[not,label= {[label distance=-0.2em]below: \tiny  $  k   $} ] {};
		\end{tikzpicture} $.
		We have added new decorations on the nodes in $\mathbb{N}^2$ to take into account the monomials that will be part of the local expansion and play a role in the computation of the degree. One has for $ z=(t,x) $
		\begin{equation*}
			\begin{tikzpicture}[scale=0.2,baseline=-2] \draw (-1,-0.25) node[xi,label= {[label distance=-0.2em]below: \tiny  $  k   $}] {} -- (0,1) node[xi] {};
			\end{tikzpicture} \equiv t^{k_1} x^{k_2}(\xi_{\varepsilon} \, K * \xi_{\varepsilon})(z), \quad \mathrm{deg}(	\begin{tikzpicture}[scale=0.2,baseline=-2] \draw (-1,-0.25) node[xi,label= {[label distance=-0.2em]below: \tiny  $  k   $}] {} -- (0,1) node[xi] {};
			\end{tikzpicture}) = \mathrm{deg}( \Xii )	+ |k|_{\mathfrak{s}}, \quad \begin{tikzpicture}[scale=0.4,baseline=-2]
				\draw (0,0) node[not,label= {[label distance=-0.2em]below: \tiny  $  m   $} ] {};
			\end{tikzpicture}	\cdot_r	\begin{tikzpicture}[scale=0.2,baseline=-2] \draw (-1,-0.25) node[xi,label= {[label distance=-0.2em]below: \tiny  $  k   $}] {} -- (0,1) node[xi] {};
			\end{tikzpicture}  = \begin{tikzpicture}[scale=0.2,baseline=-2] \draw (-1,-0.25) node[xi,label= {[label distance=-0.2em]below: \tiny  $  m+k   $}] {} -- (0,1) node[xi] {};
			\end{tikzpicture}.
		\end{equation*}
		One adds the degrees of the monomials to the previous degree. The merging root product is extended by adding the root decorations. One can see $\mathcal{T}$ as defined by some rules which correspond to the products appearing on the right-hand side  of the singular SPDE. One can draw the following parallel
		\begin{equation*}
			f(u_{\varepsilon}) (\partial_x u_{\varepsilon})^2 \equiv \begin{tikzpicture}[scale=0.4,baseline=-2]
				\coordinate (root) at (0,-0.4);
				\coordinate (t1) at (-0.3,0.5);
				\coordinate (t2) at (-1.1,0.5);
				\coordinate (t3) at (1.1,0.5);
				\coordinate (t4) at (1.1,0.5);
				\coordinate (tau1) at (1,0.6);
				\draw[kernels2] (t1) -- (root);
				\draw[kernels2] (t2) -- (root);
				\draw[] (t3) -- (root);
				\node[not,label= {[label distance=-0.2em]below: \tiny  $  k   $}] (rootnode) at (root) {};
				\draw (-1.2,0.7) node[] {\tiny$\tau_1$};
				\node[not] (rootnode) at (root) {};
				\draw (1.3,0.7) node[] {\tiny$\tau_m$};
				\draw (-0.4,0.7) node[] {\tiny$\tau_{\tiny{2}}$};
				\draw (0.5,0.7) node[] {\tiny$\cdots$};
			\end{tikzpicture}, \quad g(u_{\varepsilon}) \xi_{\varepsilon} \equiv \begin{tikzpicture}[scale=0.4,baseline=-2]
				\coordinate (root) at (0,-0.4);
				\coordinate (t2) at (-0.8,0.5);
				\coordinate (t3) at (0.8,0.5);
				\draw[] (t2) -- (root);
				\draw[] (t3) -- (root);
				\draw (0,-0.4) node[xi,label= {[label distance=-0.2em]below: \tiny  $  k   $} ] {};
				\draw (-0.9,0.7) node[] {\tiny$\tau_1$};
				\draw (0.9,0.7) node[] {\tiny$\tau_n$};
				\draw (0,0.7) node[] {\tiny$\cdots$};
			\end{tikzpicture}.
		\end{equation*}
		Then, we define another set of decorated trees used for describing the expansion of the solution $u_{\varepsilon}$
		\begin{equation*}
			\hat{\mathcal{T}} = \{ X^{k}, \, \begin{tikzpicture}[scale=0.4,baseline=-2]
				\coordinate (root) at (0,-0.4);
				\coordinate (t2) at (0,0.5);
				\draw[] (t2) -- (root);
				\draw (root) node[not] {};
				\draw (0,0.7) node[] {\tiny$\tau$};
			\end{tikzpicture}, \, k \in \mathbb{N}^2, \tau \in \mathcal{T} \}.
		\end{equation*}
		The fact that the ansatz solves the equation determines the definition of the coefficients $ F[\tau] $. This property is called coherence in the literature (see \cite[Section 4.2]{BCCH}).
		One has
		\begin{equation*}
			\begin{aligned}
				F [\begin{tikzpicture}[scale=0.4,baseline=-2]
					\coordinate (root) at (0,-0.4);
					\coordinate (t1) at (-0.3,0.5);
					\coordinate (t2) at (-1.1,0.5);
					\coordinate (t3) at (1.1,0.5);
					\coordinate (t4) at (1.1,0.5);
					\coordinate (tau1) at (1,0.6);
					\draw[kernels2] (t1) -- (root);
					\draw[kernels2] (t2) -- (root);
					\draw[] (t3) -- (root);
					\node[not,label= {[label distance=-0.2em]below: \tiny  $  k   $}] (rootnode) at (root) {};
					\draw (-1.2,0.7) node[] {\tiny$\tau_1$};
					\node[not] (rootnode) at (root) {};
					\draw (1.3,0.7) node[] {\tiny$\tau_m$};
					\draw (-0.4,0.7) node[] {\tiny$\tau_{\tiny{2}}$};
					\draw (0.5,0.7) node[] {\tiny$\cdots$};
				\end{tikzpicture}] & = \prod_{i=1}^m F[\tau_i]
				\partial^{k}	D_{\partial_x u}^2 D_u^{m-2} f(u) (\partial_x u)^2, \\
				F[ \begin{tikzpicture}[scale=0.4,baseline=-2]
					\coordinate (root) at (0,-0.4);
					\coordinate (t2) at (-0.8,0.5);
					\coordinate (t3) at (0.8,0.5);
					\draw[] (t2) -- (root);
					\draw[] (t3) -- (root);
					\draw (0,-0.4) node[xi,label= {[label distance=-0.2em]below: \tiny  $  k   $} ] {};
					\draw (-0.9,0.7) node[] {\tiny$\tau_1$};
					\draw (0.9,0.7) node[] {\tiny$\tau_n$};
					\draw (0,0.7) node[] {\tiny$\cdots$};
				\end{tikzpicture}] & = \prod_{i=1}^m F[\tau_i]
				\partial^{k}	 D_u^{n} g(u), \quad \hat{F}[ \begin{tikzpicture}[scale=0.4,baseline=-2]
					\coordinate (root) at (0,-0.4);
					\coordinate (t2) at (0,0.5);
					\draw[] (t2) -- (root);
					\draw (root) node[not] {};
					\draw (0,0.7) node[] {\tiny$\tau$};
				\end{tikzpicture}  ] = F[\tau],
			\end{aligned}
		\end{equation*}
		where $ \partial^{k} $ are the partial derivatives for the multi-index $k \in \mathbb{N}^2$, $ D_u$ is the derivative in the $u$ variable and $ D_{\partial_x u} $ is the derivative in the $\partial_x u$ variable. Below, we provide some examples
		\begin{equation*}
			\begin{aligned}
				F[\generic](u) & = g(u), \quad   F[\Xii](u) = 	F[\generic](u) D_{u} g(u) = g(u) g'(u),
				\\
				F[\IXitwo](u)	& = F[\generic](u)^2  D_{\partial_x u}^2 f(u) (\partial_x u)^2 = 2 g(u)^2 f(u).  
			\end{aligned}
		\end{equation*}
		Before performing the local perturbative expansion, we introduce the recentered stochastic integrals.
		One has
		\begin{equation*}
			(\Pi_z	\generic)(\bar{z}) = \xi_{\varepsilon}(\bar{z}), \quad
			(\Pi_z \IXi)(\bar{z}) = (K * \xi_{\varepsilon})(\bar{z}) - 
			(K * \xi_{\varepsilon})(z).
		\end{equation*}
		One does not need to recenter the noise as it is a distribution in the limit. For the second term, one has
		\begin{equation*}
			(\Pi_z \IXi)(\bar{z}) \lesssim \Vert z- \bar{z} \Vert^{ \alpha + 2}_{\mathfrak{s}}.
		\end{equation*}	
		As $\alpha > -2$, the exponent $ \alpha +2 $ is positive. The recentering corresponds to introducing Taylor expansions up to a certain order depending on the degree of the decorated tree considered. We refrain from providing a general formula as it will not be used in these lecture notes.	Also, we do not give an explicit expressions of the symmetry factors.		
		Let us run a local perturbative expansion to see how these coefficients appear for the first terms.
		We start by rewriting the regularised equation in the mild form
		\begin{equation*}
			u_{\varepsilon} = K * ( f(u_{\varepsilon}) (\partial_x u_{\varepsilon})^2 + g(u_{\varepsilon}) \xi_{\varepsilon} ).
		\end{equation*}
		We consider the following expansion
		\begin{equation*}
			u_{\varepsilon}  = u_{\varepsilon}(z) + \mathcal{R}_{\mathcal{T}_0,z}
		\end{equation*}
		where $ \mathcal{T}_0 $ contains the nodes with no decoration that is $ X^{0} $. Then, we get the following Taylor expansion
		\begin{equation*}
			g(u_{\varepsilon}) = g(u_{\varepsilon}(z)) + \cdots
		\end{equation*} 
		We plug this ansatz into the non-linearity $g$ to get
		\begin{equation*}
			u_{\varepsilon} = K * ( g(u_\varepsilon(z)) \xi_{\varepsilon} + \cdots ) =   g(u_\varepsilon(z)) \,  K * \xi_{\varepsilon} + \cdots
		\end{equation*}
		This justifies the definitions of $F[\generic]$ and $\hat{F}[\IXi]$. In the sequel, we will work with a reduced set of decorated trees that appears in the renormalised equation. It is given by
		\begin{equation}
			\begin{aligned}
				\mathcal{T}_{\text{\tiny{Chr}}} & = \{  
				\begin{tikzpicture}[scale=0.4,baseline=-2]
					\draw (0,0) node[xi,label= {[label distance=-0.2em]below: \tiny  $  $} ] {};
				\end{tikzpicture},  \begin{tikzpicture}[scale=0.4,baseline=-2]
					\coordinate (root) at (0,-0.4);
					\coordinate (t2) at (-0.8,0.5);
					\coordinate (t3) at (0.8,0.5);
					\draw[] (t2) -- (root);
					\draw[] (t3) -- (root);
					\draw (0,-0.4) node[xi,label= {[label distance=-0.2em]below: \tiny  $     $} ] {};
					\draw (-0.9,0.7) node[] {\tiny$\tau_1$};
					\draw (0.9,0.7) node[] {\tiny$\tau_n$};
					\draw (0,0.7) node[] {\tiny$\cdots$};
				\end{tikzpicture},     \begin{tikzpicture}[scale=0.4,baseline=-2]
					\coordinate (root) at (0,-0.4);
					\coordinate (t1) at (-0.3,0.5);
					\coordinate (t2) at (-1.1,0.5);
					\coordinate (t3) at (1.1,0.5);
					\coordinate (t4) at (1.1,0.5);
					\coordinate (tau1) at (1,0.6);
					\draw[kernels2] (t1) -- (root);
					\draw[kernels2] (t2) -- (root);
					\draw[] (t3) -- (root);
					\node[not,label= {[label distance=-0.2em]below: \tiny  $     $}] (rootnode) at (root) {};
					\draw (-1.2,0.7) node[] {\tiny$\tau_1$};
					\node[not] (rootnode) at (root) {};
					\draw (1.3,0.7) node[] {\tiny$\tau_m$};
					\draw (-0.4,0.7) node[] {\tiny$\tau_{\tiny{2}}$};
					\draw (0.5,0.7) node[] {\tiny$\cdots$};
				\end{tikzpicture},  m,n \in \mathbb{N}^*, m \geq 2,  \tau_i \in \mathcal{T}_{\text{\tiny{Chr}}} \}.
			\end{aligned}
		\end{equation}
		We call this set the Christoffel trees. We will justify in the next section why one can consider only this set. The next proposition narrows down the set of decorated trees with a negative degree to a set very close to $\mathcal{T}_{\text{\tiny{Chr}}}$. The main difference is that one is allowed to use only a single thick edge or one monomial decoration on only one node within the decorated tree. 
		\begin{proposition}
			Let $\tau \in \mathcal{T}$ with $ \mathrm{deg}(\tau) \leq 0 $ then 
			\begin{itemize}
				\item $ \tau $  contains one monomial decoration $(0,1)$ and all  its nodes are of the form
				\begin{equation*} 
					\begin{tikzpicture}[scale=0.4,baseline=-2]
						\draw (0,0) node[xi,label= {[label distance=-0.2em]below: \tiny  $  $} ] {};
					\end{tikzpicture},  \begin{tikzpicture}[scale=0.4,baseline=-2]
						\coordinate (root) at (0,-0.4);
						\coordinate (t2) at (-0.8,0.5);
						\coordinate (t3) at (0.8,0.5);
						\draw[] (t2) -- (root);
						\draw[] (t3) -- (root);
						\draw (0,-0.4) node[xi,label= {[label distance=-0.2em]below: \tiny  $     $} ] {};
						\draw (-0.9,0.7) node[] {\tiny$\tau_1$};
						\draw (0.9,0.7) node[] {\tiny$\tau_n$};
						\draw (0,0.7) node[] {\tiny$\cdots$};
					\end{tikzpicture},     \begin{tikzpicture}[scale=0.4,baseline=-2]
						\coordinate (root) at (0,-0.4);
						\coordinate (t1) at (-0.3,0.5);
						\coordinate (t2) at (-1.1,0.5);
						\coordinate (t3) at (1.1,0.5);
						\coordinate (t4) at (1.1,0.5);
						\coordinate (tau1) at (1,0.6);
						\draw[kernels2] (t1) -- (root);
						\draw[kernels2] (t2) -- (root);
						\draw[] (t3) -- (root);
						\node[not,label= {[label distance=-0.2em]below: \tiny  $     $}] (rootnode) at (root) {};
						\draw (-1.2,0.7) node[] {\tiny$\tau_1$};
						\node[not] (rootnode) at (root) {};
						\draw (1.3,0.7) node[] {\tiny$\tau_m$};
						\draw (-0.4,0.7) node[] {\tiny$\tau_{\tiny{2}}$};
						\draw (0.5,0.7) node[] {\tiny$\cdots$};
					\end{tikzpicture},  m,n \in \mathbb{N}^*, m \geq 2,  \tau_i \in \mathcal{T}.
				\end{equation*}
				\item $ \tau $ does not have any monomial decoration and it contains at most one node of type
				\begin{equation*}
					\begin{tikzpicture}[scale=0.4,baseline=-2]
						\coordinate (root) at (0,-0.4);
						\coordinate (t1) at (-0.3,0.5);
						\coordinate (t2) at (-1.1,0.5);
						\coordinate (t3) at (1.1,0.5);
						\coordinate (t4) at (1.1,0.5);
						\coordinate (tau1) at (1,0.6);
						\draw[] (t1) -- (root);
						\draw[kernels2] (t2) -- (root);
						\draw[] (t3) -- (root);
						\node[not,label= {[label distance=-0.2em]below: \tiny  $     $}] (rootnode) at (root) {};
						\draw (-1.2,0.7) node[] {\tiny$\tau_1$};
						\node[not] (rootnode) at (root) {};
						\draw (1.3,0.7) node[] {\tiny$\tau_m$};
						\draw (-0.4,0.7) node[] {\tiny$\tau_{\tiny{2}}$};
						\draw (0.5,0.7) node[] {\tiny$\cdots$};
					\end{tikzpicture},  m,n \in \mathbb{N}^*, m \geq 2,  \tau_i \in \mathcal{T}.
				\end{equation*}
				The other nodes are the ones that define the Christoffel trees.
			\end{itemize}
		\end{proposition}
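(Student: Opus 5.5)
Since the statement is a degree count, I will prove it by computing $\mathrm{deg}(\tau)$ exactly for an arbitrary $\tau \in \mathcal{T}$ and then reading off which configurations keep it nonpositive. For $\tau \in \mathcal{T}$ let $N$ be the number of noise nodes, $E_2$ the number of thin edges, $E_1$ the number of thick edges and $|k|_{\mathfrak{s}}$ the sum of the scaled monomial decorations. By definition,
\begin{equation*}
\mathrm{deg}(\tau) = \alpha N + 2 E_2 + E_1 + |k|_{\mathfrak{s}} .
\end{equation*}
The edges are $E = E_1 + E_2$, the nodes are $E+1$, and each node is either a noise node or a non-noise node. By the construction of $\mathcal{T}$, every non-noise node other than a bare monomial leaf $X^k$ has at least two thick outgoing edges. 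Each noise node only has thin outgoing edges. Every edge points to a child, so the number of nodes is $E+1 = N + N_0$, where $N_0$ counts the non-noise nodes.

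The key step is to rewrite the degree so that it becomes a sum of nonnegative local contributions plus a baseline. I take $\alpha = -2+\kappa$ (the statement only makes sense close to criticality). Substituting $E_2 = E - E_1$ and $E = N + N_0 - 1$ gives
\begin{equation*}
\mathrm{deg}(\tau) = \kappa N - 2 + 2N_0 - E_1 + |k|_{\mathfrak{s}} .
\end{equation*}
Now assign each thick edge and each $f$-type node its share. A non-noise node with exactly two thick children contributes $2 - 2 = 0$. A node with $j \ge 2$ thick children contributes $2 - j$; the case $j=2$ is the only one allowed, because the rule $f(u)(\partial_x u)^2$ forces at most two derivatives, and the set $\mathcal{T}$ should be read this way. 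The picture's ``$\tau_2$'' then marks either a second thick edge or a thin one, which is exactly the distinction in the statement. A non-noise node with only one thick child, as in the second bullet, contributes $+1$. A monomial leaf $X^k$ contributes $2 + |k|_{\mathfrak{s}} \ge 2$ if it is not the root. All noise nodes together contribute $\kappa N > 0$.

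So $\mathrm{deg}(\tau) = -2 + \kappa N + \sum_{\text{nodes}} c_v + |k|_{\mathfrak{s}}$ with every $c_v \ge 0$. If $\mathrm{deg}(\tau) \le 0$, the excess over $-2$ is at most $2 - \kappa N < 2$. This excess can absorb either one monomial $(0,1)$ with all other $c_v = 0$ (first bullet), or at most one node with a single thick edge (second bullet). The exponent $|k|_{\mathfrak{s}} = 1$ is only possible for $k = (0,1)$, which is why this decoration appears. Any $|k|_{\mathfrak{s}} \ge 2$, any monomial leaf, or two defects pushes the degree to at least $-\kappa N + 0$. Here I must check that such trees always contain $N \ge 1$ noises, hence $\mathrm{deg} > 0$ when $\kappa N$ is counted. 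This is the delicate bookkeeping, and I expect the main obstacle to be precisely here. It means pinning down the exact edge and node accounting for the root, whose incoming edge is absent, and for leaves. It also means making sure that the $\kappa N$ term breaks the borderline case $\mathrm{deg} = 0$ correctly. To handle this, I will first verify the formula on the examples $\Xii$ and $\IXitwo$, both of degree $2\alpha + 2 = -2 + 2\kappa$, to calibrate the constants. I will then argue by induction on the tree structure.
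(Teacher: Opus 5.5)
Your route is correct in substance, and it differs from the paper's. The paper argues by removal. It takes for granted that no tree has degree $\le -2$ (``the lowest degree is that of the noise''). It then observes that deleting a monomial decoration with $|k|_{\mathfrak{s}}\ge 2$, or two decorations $(0,1)$, from a tree of degree $\le 0$ would produce a tree of degree $\le -2$. Finally, it informally equates a node with a single thick edge with a $(0,1)$ decoration, both costing $+1$. You instead derive the exact identity $\mathrm{deg}(\tau)=-2+(\alpha+2)N+\sum_v (2-j_v)+|k|_{\mathfrak{s}}$. Here the sum runs over non-noise nodes and $j_v$ is the number of thick children of $v$. This identity proves the floor the paper assumes and makes the cost of every defect an exact nonnegative integer. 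It also gives the classification in one stroke for every $\alpha>-2$, so you do not need $\alpha$ close to $-2$; just set $\kappa=\alpha+2>0$. The paper's argument is shorter but heuristic; yours is self-contained.

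There are three things to fix. First, your reading of $\mathcal{T}$ is wrong. $\mathcal{T}$ contains non-noise nodes with exactly one thick edge, and also non-noise nodes with only thin edges (the generators with a non-noise root and $k$-decoration drawn with thin edges, or with one thick edge). So your claim that every non-noise node other than a monomial leaf has at least two thick children is false, and your re-reading of the picture is unnecessary. All you need is $j_v\in\{0,1,2\}$, so that $c_v=2-j_v\ge 0$; a purely thin internal node has $c_v=2$ and is then excluded exactly like a monomial leaf.

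Second, the step you call the main obstacle is immediate and needs no induction or calibration. The identity $E=\#\{\text{nodes}\}-1$ already accounts for the root. If $N\ge 1$, the integer excess $\sum_v c_v+|k|_{\mathfrak{s}}$ is at most $2-\kappa N<2$, hence equal to $0$ or $1$, and these are exactly the two bullets. Note that an excess $\ge 2$ gives $\mathrm{deg}(\tau)\ge \kappa N>0$, not ``$-\kappa N$''.

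Finally, if $N=0$, every leaf is some $X^k$ with $c_v\ge 2$. Hence $\mathrm{deg}(\tau)\ge 0$, with equality only for $\tau=X^0=\mathbf{1}$. This tree of degree $0$ fits neither bullet, so it must be excluded from the statement; the paper's proof silently ignores it too.
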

		\begin{proof}The equation is subcritical and the lowest degree is given by that of the noise which $ \alpha > -2 $. If $ \tau $ contains more monomials decorations than just $(0,1)$, then they can be removed and one still gets a decorated tree with a negative degree below $-2$ which is not possible. Removing one thick edge boils down to improving the regularity by $+1$, which has the same effect as  a monomial decoration $(0,1)$. This observation allows us to conclude.
		\end{proof}

		One observes for the generalised KPZ equation that the elementary differentials are monomials in $f,g$ and their derivatives. This introduces some redundancy in the coding. For example, one has
		\begin{equation*}
			F[\IiIXiiib](u) =  F[\IiIXiii](u) = 2 f(u) g'(u) g(u)^2.
		\end{equation*}
		This observation motivates the introduction of a new encoding where one avoids such identities.
		We consider a set of abstract variables $ z_{(\generic,k)}, z_{(\derivatives,k)} $ with $ k \in \mathbb{N} $. Then, we consider the following set of monomials
		\begin{equation*}
			\begin{aligned}
				\mathfrak{M}_{ \derivatives, \generic} &= \lbrace z^{\beta} = \prod_{k \geq 0}  \left( z_{( \derivatives,k)} \right)^{\beta( \derivatives,k )}  \prod_{k \geq 0}\left(  z_{(\generic,k)}\right)^{\beta(\generic,k)} : 
				[\beta] =1 \rbrace\\
			\end{aligned}
		\end{equation*}
		where the $\beta : \{ (\generic,k), (\derivatives,k), \, k \in \mathbb{N} \} \rightarrow \mathbb{N} $ have finite support and one has the following population condition $ [\beta] =1 $ with $ [\beta] $ defined by
		\begin{equation*}  \begin{aligned}
				[\beta] & = \sum_{k \in \mathbb{N}} k \beta(   \generic,k) + (2+k) \beta(\derivatives,k)  -  \sum_{k \in \mathbb{N}} \beta(  \generic,k) +  \beta(\derivatives,k)
				\\ & = \sum_{k \in \mathbb{N}} (k-1) \beta(   \generic,k) + (1+k) \beta(\derivatives,k).
			\end{aligned}
		\end{equation*}
		We denote by 	$\mathcal{M}_{ \derivatives, \generic}$ the linear span of $ 	\mathfrak{M}_{ \derivatives, \generic} $.
		One can interpret each variable $ z_{(\generic,k)}, z_{(\derivatives,k)} $
		as a node with $k$ thin incoming edges and two thick incoming edges for $  z_{(\derivatives,k)} $.
		One can define  a degree map $ \mathrm{deg} $ for these multi-indices by first defining it on each $z$ variable
		\begin{equation*}
			\begin{aligned}
				\mathrm{deg}( z_{(\generic,k)}  ) & = \alpha + 2 k , \quad 	\mathrm{deg}( z_{(\derivatives,k)}  ) =  2 + 2 k,
				\\ \mathrm{deg}(  z^{\beta} ) &= \sum_{k \in \mathbb{N}} ( \beta(\generic,k) \mathrm{deg}( z_{(\generic,k)} ) + \beta(\derivatives,k) \mathrm{deg}(z_{(\derivatives,k)})).
			\end{aligned}
		\end{equation*}
		The population condition is a combinatorial condition that guarantees the possibility of constructing at least one trees out of these nodes.
		Below, we provide a map $ \Psi $ between the Christoffel  trees and $ 	\mathfrak{M}_{ \derivatives, \generic} $
		\begin{equation}
			\begin{aligned}
				\Psi \left( \begin{tikzpicture}[scale=0.4,baseline=-2]
					\coordinate (root) at (0,-0.4);
					\coordinate (t2) at (-0.8,0.5);
					\coordinate (t3) at (0.8,0.5);
					\draw[] (t2) -- (root);
					\draw[] (t3) -- (root);
					\draw (0,-0.4) node[xi,label= {[label distance=-0.2em]below: \tiny  $     $} ] {};
					\draw (-0.9,0.7) node[] {\tiny$\tau_1$};
					\draw (0.9,0.7) node[] {\tiny$\tau_n$};
					\draw (0,0.7) node[] {\tiny$\cdots$};
				\end{tikzpicture} \right) &= z_{(\generic,n)}  \prod_{i=1}^n \Psi(\sigma_i), \quad	\Psi \left( \begin{tikzpicture}[scale=0.4,baseline=-2]
					\coordinate (root) at (0,-0.4);
					\coordinate (t1) at (-0.3,0.5);
					\coordinate (t2) at (-1.1,0.5);
					\coordinate (t3) at (1.1,0.5);
					\coordinate (t4) at (1.1,0.5);
					\coordinate (tau1) at (1,0.6);
					\draw[kernels2] (t1) -- (root);
					\draw[kernels2] (t2) -- (root);
					\draw[] (t3) -- (root);
					\node[not] (rootnode) at (root) {};
					\draw (-1.2,0.7) node[] {\tiny$\tau_1$};
					\node[not] (rootnode) at (root) {};
					\draw (1.3,0.7) node[] {\tiny$\tau_m$};
					\draw (-0.4,0.7) node[] {\tiny$\tau_{\tiny{2}}$};
					\draw (0.5,0.7) node[] {\tiny$\cdots$};
				\end{tikzpicture} \right)  = z_{(\begin{tikzpicture}[scale=0.1,baseline=-2]
						\coordinate (root) at (0,-0.4);
						\coordinate (t1) at (-.8,1.3);
						\coordinate (t2) at (.8,1.3);
						\draw[kernels2] (t1) -- (root);
						\draw[kernels2] (t2) -- (root);
						\node[not] (rootnode) at (root) {};
					\end{tikzpicture},m-2)} \prod_{i=1}^m \Psi(\tau_i). 
			\end{aligned}
		\end{equation}
		
		Multi-indices were first introduced in the context of quasi-linear SPDEs in \cite{OSSW}.	The recentering of stochastic data indexed by multi-indices is performed in \cite{LOT,BL23} and their convergence is established in \cite{LOTT}. A solution theory has been proposed in \cite{Broux25} for a large class of subcritical equations. A survey on multi-indices for singular SPDEs can be found in \cite{BOT24}.
		In the next Lemma, we show that the degree map $ \mathrm{deg} $ is preserved by the map $ \Psi $. It is an opportunity for te reader to manipulate the recursive formula provided by $\Psi$.
		
		\begin{lemma} For every Christoffel tree $\tau$, one has
			\begin{equation*}
				\mathrm{deg}(\tau) = \mathrm{deg}(\Psi(\tau)).
			\end{equation*}
		\end{lemma}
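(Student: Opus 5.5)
The plan is a structural induction on the Christoffel tree $\tau$, using the recursive definition of $\mathcal{T}_{\text{\tiny{Chr}}}$ together with the recursive formula for $\Psi$. Two observations make this work.

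\emph{First, the degree of a monomial is additive.} On $\mathfrak{M}_{\derivatives,\generic}$ one has $\mathrm{deg}(z^{\beta}z^{\beta'}) = \mathrm{deg}(z^{\beta}) + \mathrm{deg}(z^{\beta'})$, since by definition it is a sum over the exponents of $\beta$. So $\mathrm{deg}$ behaves as a morphism from the product of monomials into $(\mathbb{R},+)$. The population condition plays no role here; we only use that $\Psi(\tau)$ is a product of $z$-variables.

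\emph{Second, the degree of a tree splits locally.} For a decorated tree, $\mathrm{deg}$ is the sum over nodes and edges. We attribute each edge to the node it is attached to from above, i.e.\ to its parent, so every edge is counted exactly once. Writing $\tau$ as a root node grafted on $\tau_1,\dots,\tau_n$, this gives
\begin{equation*}
\mathrm{deg}(\tau) = \mathrm{deg}(\text{root}) + \sum_{e \text{ incoming at the root}} \mathrm{deg}(e) + \sum_{i} \mathrm{deg}(\tau_i).
\end{equation*}

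The base case is the single noise $\generic$. Here $\mathrm{deg}(\generic)=\alpha$ and $\Psi(\generic) = z_{(\generic,0)}$, which also has degree $\alpha + 2\cdot 0$.

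For the induction step there are two cases, according to the type of the root.
\begin{itemize}
\item If the root is a noise node with $n$ thin incoming edges, the local contribution is $\alpha + 2n$. This is exactly $\mathrm{deg}(z_{(\generic,n)})$.
\item If the root is an undecorated node with two thick edges and $m-2$ thin edges, the local contribution is $0 + 2\cdot 1 + 2(m-2)$. This equals $2 + 2(m-2) = \mathrm{deg}(z_{(\derivatives,m-2)})$.
\end{itemize}
In both cases $\Psi(\tau)$ is the corresponding root variable times $\prod_i \Psi(\tau_i)$. Additivity of $\mathrm{deg}$ on monomials, together with the induction hypothesis $\mathrm{deg}(\tau_i)=\mathrm{deg}(\Psi(\tau_i))$, then closes the induction.

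There is no real obstacle. The only step that needs care is the bookkeeping convention: every edge must be assigned to its parent node, so that each edge is counted once and the node weights match $\mathrm{deg}(z_{(\cdot,k)})$ exactly. One should also read the $\sigma_i$ in the first formula for $\Psi$ as the subtrees $\tau_i$.
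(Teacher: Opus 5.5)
Your proof is correct and follows the same structural induction as the paper: a base case on the single noise, then additivity of $\mathrm{deg}$ over the root variable and the factors $\Psi(\tau_i)$. Your convention of assigning each edge to its parent is the more careful version: the paper's displayed identity $\mathrm{deg}(\tau)=\mathrm{deg}(\generic)+\sum_i\mathrm{deg}(\tau_i)$ omits the $2n$ contributed by the thin edges, which is exactly what is needed to match $\mathrm{deg}(z_{(\generic,n)})=\alpha+2n$, and you also check the thick-edge root explicitly where the paper only says it is analogous.
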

		\begin{proof}
			Indeed, one proceeds by induction. It is clear that
			\begin{equation*}
				\mathrm{deg}(\generic) = \alpha = \mathrm{deg}(\generic,0) = \mathrm{deg}(\Psi(\generic)).
			\end{equation*}
			Then,  one has
			\begin{equation*}
				\mathrm{deg}(\Psi \left( \begin{tikzpicture}[scale=0.4,baseline=-2]
					\coordinate (root) at (0,-0.4);
					\coordinate (t2) at (-0.8,0.5);
					\coordinate (t3) at (0.8,0.5);
					\draw[] (t2) -- (root);
					\draw[] (t3) -- (root);
					\draw (0,-0.4) node[xi,label= {[label distance=-0.2em]below: \tiny  $     $} ] {};
					\draw (-0.9,0.7) node[] {\tiny$\tau_1$};
					\draw (0.9,0.7) node[] {\tiny$\tau_n$};
					\draw (0,0.7) node[] {\tiny$\cdots$};
				\end{tikzpicture} \right)) = \mathrm{deg}(z_{(\generic,n)}  \prod_{i=1}^n \Psi(\tau_i)) =  \mathrm{deg}(z_{(\generic,n)}) + \sum_{i=1}^n \mathrm{deg}( \Psi(\tau_i) ).
			\end{equation*}
			On the other hand,
			\begin{equation*}
				\mathrm{deg}( \begin{tikzpicture}[scale=0.4,baseline=-2]
					\coordinate (root) at (0,-0.4);
					\coordinate (t2) at (-0.8,0.5);
					\coordinate (t3) at (0.8,0.5);
					\draw[] (t2) -- (root);
					\draw[] (t3) -- (root);
					\draw (0,-0.4) node[xi,label= {[label distance=-0.2em]below: \tiny  $     $} ] {};
					\draw (-0.9,0.7) node[] {\tiny$\tau_1$};
					\draw (0.9,0.7) node[] {\tiny$\tau_n$};
					\draw (0,0.7) node[] {\tiny$\cdots$};
				\end{tikzpicture} ) = \mathrm{deg}(\generic) + \sum_{i=1}^n \mathrm{deg}( \tau_i ).
			\end{equation*}
			We conclude by applying the induction hypothesis to the $ \tau_i $ and using the base case on $\generic$. The proof for the other product with two thick edges proceeds in the same way.
		\end{proof}
		In the Table~\ref{tab:kpz_relevant}, we list the Christoffel trees with a negative degree and their corresponding multi-indices. The decorated trees associated with the same multi-index are grouped together.
		As an example, one has
		\begin{equation*}
			\Psi(\IiIXiiib) =\Psi(\IiIXiii)  = z_{ (\generic,0)}^2 z_{ (\generic,1)}z_{( \derivatives,0)}.
		\end{equation*}
		Another set of multi-indices  of interest in the sequel is
		\begin{equation*}
			\begin{aligned}
				\mathfrak{M}_{ \begin{tikzpicture}[scale=0.2,baseline=-2]
						\coordinate (root) at (0,0);
						\node[diff] (rootnode) at (root) {};
					\end{tikzpicture}, \derivatives, \generic} &= \lbrace z^{\beta} = \prod_{k \geq 0} \left( z_{( \begin{tikzpicture}[scale=0.2,baseline=-2]
						\coordinate (root) at (0,0);
						\node[diff] (rootnode) at (root) {};
					\end{tikzpicture},k)} \right)^{\beta( \begin{tikzpicture}[scale=0.2,baseline=-2]
						\coordinate (root) at (0,0);
						\node[diff] (rootnode) at (root) {};
					\end{tikzpicture},k )}  ( z_{( \derivatives,k)} )^{\beta( \derivatives,k )}  \prod_{k \geq 0}(  z_{(\generic,k)})^{\beta(\generic,k)} : 
				[\beta] =1 \rbrace\\
			\end{aligned}
		\end{equation*}
		where we have added an extra variable $ z_{(\begin{tikzpicture}[scale=0.2,baseline=-2]
				\coordinate (root) at (0,0);
				\node[diff] (rootnode) at (root) {};
			\end{tikzpicture},k)} $ and we assume that one has exactly one variable of this type. We do not assign a degree to these new multi-indices as they will be used to characterise the chain rule symmetry. The new variable will correspond to an infinitesimal change of coordinates. One is able to define elementary differentials associated with these multi-indices
		\begin{equation*}
			F( z_{( \derivatives,k)} )(u) = 
			f^{(k)}(u), \quad F(z_{( \generic,k)})(u) = g^{(k)}(u), \quad F(z_{(\begin{tikzpicture}[scale=0.2,baseline=-2]
					\coordinate (root) at (0,0);
					\node[diff] (rootnode) at (root) {};
				\end{tikzpicture},k)} )(u) = h^{(k)}(u).
		\end{equation*}
		Then, it extended multiplicatively to a monomial. One has
		\begin{equation*}
			\begin{aligned}
				F(z_{ (\generic,0)}^2 z_{ (\generic,1)}z_{( \derivatives,0)})(u) & = 	F(z_{ (\generic,0)})(u)^2 F( z_{ (\generic,1)})(u) F(z_{( \derivatives,0)})(u) 
				\\
				&= 	g(u)^2 g'(u) f(u) 
				\\ & = 	F(\Psi(\IiIXiiib)) = F( \Psi(\IiIXiii)). 
			\end{aligned}
		\end{equation*}
		In fact one has
		\begin{lemma} For every Christoffel tree $ \tau $, one has
			\begin{equation*}
				F( \tau ) = F(\Psi(\tau)).
			\end{equation*}
		\end{lemma}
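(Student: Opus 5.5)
The plan is a structural induction on the Christoffel tree $\tau$, following the recursive description of $\mathcal{T}_{\text{\tiny{Chr}}}$ and of $\Psi$, in the same way as the previous lemma on degrees. Both sides are multiplicative in the branches: $F$ on trees is defined as a product of $F[\tau_i]$ times a factor attached to the root, and $F$ on multi-indices is extended multiplicatively from the variables. It therefore suffices to check that the root factor of a tree coincides with $F$ of the variable $z$ produced by $\Psi$ at the root, and then apply the induction hypothesis to the branches.

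\emph{Base case.} For $\tau = \generic$ one has $F[\generic](u) = g(u)$. On the other side, $\Psi(\generic) = z_{(\generic,0)}$ and $F(z_{(\generic,0)})(u) = g^{(0)}(u) = g(u)$.

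\emph{Noise root.} Let $\tau$ have a noise root with thin branches $\tau_1,\dots,\tau_n$. The recursive definition gives $F[\tau] = \prod_{i=1}^n F[\tau_i]\, D_u^n g(u) = g^{(n)}(u)\prod_i F[\tau_i]$. On the multi-index side, $\Psi(\tau) = z_{(\generic,n)}\prod_i \Psi(\tau_i)$. Multiplicativity then gives $F(\Psi(\tau)) = g^{(n)}(u)\prod_i F(\Psi(\tau_i))$, and the induction hypothesis $F[\tau_i] = F(\Psi(\tau_i))$ closes this case.

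\emph{Root with two thick edges.} Let the root carry two thick branches and $m-2$ thin branches, $\tau_1,\dots,\tau_m$. Here $\Psi(\tau) = z_{(\derivatives,m-2)}\prod_i\Psi(\tau_i)$, so $F(\Psi(\tau)) = f^{(m-2)}(u)\prod_i F(\Psi(\tau_i))$. On the tree side, the root factor is obtained from $f(u)(\partial_x u)^2$: the $m-2$ derivatives in $u$ act on $f$ and give $f^{(m-2)}(u)$, while the two derivatives in $\partial_x u$ only extract the coefficient of the quadratic form $(\partial_x u)^2$.

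This last case is the only real point of the proof and the step I expect to be the main obstacle. One must use the normalisation of the thick root factor under which the lemma is stated, namely the one used in the paper's worked example $F(z_{(\generic,0)}^2 z_{(\generic,1)} z_{(\derivatives,0)}) = g^2 g' f = F(\Psi(\IiIXiiib))$. With this normalisation the thick-root elementary differential is exactly $f^{(m-2)}(u)$, i.e.\ the coefficient of $(\partial_x u)^2$ after the $u$-derivatives. In writing the proof I would spell out this convention explicitly: the symmetric normalisation of $D^2_{\partial_x u}$ applied to the quadratic term, together with the symmetry factor $S(\tau)$ absorbing the reordering of the two thick edges. With it, the root factors match and the induction hypothesis on the $\tau_i$ finishes the argument.

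Since every Christoffel tree is generated by these two constructions from $\generic$, the identity holds for all $\tau \in \mathcal{T}_{\text{\tiny{Chr}}}$.
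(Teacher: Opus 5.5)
Your induction is the paper's own argument (``by induction as for the degree map''), and your base case and noise-root case are correct. The thick-root case, however, fails as you argue it. From the definition of $F$ on trees, the root factor is $D^2_{\partial_x u}D_u^{m-2}\big(f(u)(\partial_x u)^2\big)=2f^{(m-2)}(u)$. Two derivatives in $\partial_x u$ of $c\,(\partial_x u)^2$ give $2c$, not the coefficient $c$. The paper computes this explicitly: $F[\IXitwo]=2g^2f$ and $F[\IiIXiiib]=2fg'g^2$.

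Neither a ``symmetric normalisation'' of $D^2_{\partial_x u}$ (no such thing is defined) nor $S(\tau)$ can remove this factor. $S(\tau)$ does not occur in the identity $F(\tau)=F(\Psi(\tau))$. Moreover, the $2$ is present even when the two thick branches differ: in $\IiIXiiib$ they are $\Xii$ and $\generic$, so no automorphism exchanges them. If you pair the tree side with $F(z_{(\derivatives,m-2)})=f^{(m-2)}$, the two sides differ by $2^r$, where $r$ is the number of nodes carrying two thick edges.

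The fix belongs on the multi-index side, not the tree side. The displayed definition $F(z_{(\derivatives,k)})=f^{(k)}$, and the example $g^2g'f$ you anchored on, drop a factor $2$. The normalisation the paper actually uses everywhere else is $F(z_{(\derivatives,k)})(u)=2f^{(k)}(u)$. You can see this in three places: the factor $(2s_k)$ in the proof of Theorem~\ref{injectivity_Upsilon}; the prefactor $2$ in the computation of $\partial_tF_{\psi_t\cdot f,\psi_t\cdot g}(z_{(\derivatives,k)})$ in the proof of Theorem~\ref{geo_chain_rule}; and the identity $\tfrac12F_{f,g}(z_{(\derivatives,0)})=f$ used for the covariant derivative. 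With this convention both root factors equal $2f^{(m-2)}(u)$, and your induction closes at once. State this convention explicitly rather than trying to renormalise the tree side.
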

		\begin{proof} The proof works by induction as the one for the degree map.
		\end{proof}

		\begin{table}[ht]
			\begin{tabular}{c | c | c}
				Degree  &  Multi-indices &  Trees \\
				\hline
				&&\\
				$-1-2 \kappa$ & $ z_{ (\generic,0)}z_{ (\generic,1)}$, $\quad z_{ (\generic,0)}^2 z_{(\derivatives,0)}$  &\Xii \,,\;\IXitwo  \\ & &  \\
				\hline 
				& & \\
				& $z_{ (\generic,0)}^2z_{ (\generic,2)}$, $ \quad z_{ (\generic,0)}z_{ (\generic,1)}^2$,  &  \Xiiia \,,
				\Xiii\,, \\ $-\frac{1}{2} - 3 \kappa$& $ z_{ (\generic,0)}^2z_{ (\generic,1)}z_{( \derivatives,0)}$, &\{ \IiIXiiib \,, \IiIXiii\,\}     \\  & $ z_{ (\generic,0)}^3 z_{( \derivatives,0)}^2$,     $\quad z_{ (\generic,0)}^3z_{( \derivatives,1)}$&  \IiIXiiic \,, \IiXiiic\,  \\  & & \\ \hline
				&&\\ & $z_{ (\generic,0)}z_{ (\generic,1)}^3$, $\quad  z_{ (\generic,0)}^3z_{ (\generic,3)} $,  & \Xiiii \,, \;  \Xiiiib\,, \;   \\ & $z_{ (\generic,0)}^2z_{ (\generic,1)}z_{ (\generic,2)}$,& \{ \Xiiiic \,, \Xiiiie \}, \\  & $ z_{ (\generic,0)}^2z_{ (\generic,1)}^2z_{(\derivatives,0)}$,  & \{\IiXiiiia \,,  \IiXiiiib \,, \IiXiiiic \,, \iiIiXiiiib \}\,,  \\ & $ z_{ (\generic,0)}^3z_{ (\generic,1)}z_{( \derivatives,0)}$,   & \{\Xiiiieb \,,  \Xiiiicb \},   \;   \\ $ - 4 \kappa $ &$ z_{ (\generic,0)}^3z_{ (\generic,1)}z_{(\derivatives,0)}^2 $,& \{ \IiXiiiiab \,, \IiXiiiibc \,, \IiXiiiiac \,, \iiIiXiiiic \},\\&  $z_{ (\generic,0)}^4 z_{( \derivatives,0)}^3$, & \{\Xitwo \,, \iiIiXiiii \}, \\ &$ z_{ (\generic,0)}^4z_{( \derivatives,0)}z_{(\derivatives,1)},$& \{  \Xiiiieab \,, \Xiiiieabbis \,, \Xiiiicab  \},\; \\ &  $ z_{ (\generic,0)}^4 z_{(\derivatives,2)} $, $\quad  z_{ (\generic,0)}^3 z_{ (\generic,1)}z_{( \derivatives,1)} $& \; \Xiiiiba, \{\Xiiiica \,,  \Xiiiiea \,, \Xiiiieabis\} \\ && \\
				\hline 
			\end{tabular}
			\caption{We present all the Christoffel trees with negative degree (except the noise) and their multi-indices. The degree is computed for the space-time white noise. }
			\label{tab:kpz_relevant}
		\end{table}
		
		The next theorem justifies the use of multi-indices as they provide the best encoding of the elementary differentials for scalar-valued equations.
		We introduce the following notation
		\begin{equation*}
			F(z^{\beta}) = 	F_{f,g}^{h}(z^{\beta})
		\end{equation*}
		in order to stress the dependency on the functions $f,g$ and $h$.
		\begin{theorem} \label{injectivity_Upsilon}
			The map $  F_{f,g}^{h} $ is injective in the sense that if for any $f,g,h \in \mathcal{C}^{\infty}(\mathbb{R}, \mathbb{R})$ one has $F_{f,g}^h(v) = 0 $ for some $v\in \mathcal{M}_{\begin{tikzpicture}[scale=0.2,baseline=-2]
					\coordinate (root) at (0,0);
					\node[diff] (rootnode) at (root) {};
				\end{tikzpicture},\derivatives,\generic}$ then $ v=0$.
		\end{theorem}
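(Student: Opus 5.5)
Since $F_{f,g}^h$ is multiplicative and linear, for fixed $u$ it is evaluation of the polynomial $v$ at the point
\begin{equation*}
z_{(\derivatives,k)} = f^{(k)}(u), \quad z_{(\generic,k)} = g^{(k)}(u), \quad z_{(\diffh,k)} = h^{(k)}(u).
\end{equation*}
The plan is therefore to show that a polynomial vanishing at all such points is the zero polynomial. I read the statement as: if $F_{f,g}^h(v)(u)=0$ for all $f,g,h$ (and all $u$), then $v=0$.

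\textbf{Step 1.} Write $v = \sum_{\beta} c_\beta z^\beta$, a finite sum. Let $N$ be larger than every index $k$ such that some variable with index $k$ occurs in a monomial $z^\beta$ with $c_\beta\neq 0$. Then only the finitely many variables $z_{(\cdot,k)}$ with $k< N$ and $\cdot\in\{\diffh,\derivatives,\generic\}$ occur, so $v$ is an ordinary polynomial $P$ in $3N$ commuting indeterminates.

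\textbf{Step 2.} Fix $u=0$. Given any real numbers $a_k, b_k, c_k$ for $k<N$, choose the polynomials
\begin{equation*}
f(x)=\sum_{k<N} a_k \frac{x^k}{k!}, \quad g(x)=\sum_{k<N} b_k\frac{x^k}{k!}, \quad h(x)=\sum_{k<N} c_k \frac{x^k}{k!}.
\end{equation*}
These satisfy $f^{(k)}(0)=a_k$, $g^{(k)}(0)=b_k$ and $h^{(k)}(0)=c_k$. Hence $F_{f,g}^h(v)(0)=P(a,b,c)$, and the hypothesis says $P$ vanishes on all of $\mathbb{R}^{3N}$. A polynomial over an infinite field that vanishes everywhere is zero, for instance by induction on the number of variables. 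So all $c_\beta=0$, that is $v=0$.

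\textbf{Main point.} The only subtlety is whether the assumption is pointwise in $u$ or as functions. It suffices to use $u=0$, because the jets of smooth functions at a single point can be prescribed freely (Borel's lemma, or simply polynomials as above). The population condition $[\beta]=1$ and the restriction to exactly one $\diffh$ variable play no role: injectivity already holds on the full polynomial algebra. The contrast with the tree encoding, where $F[\IiIXiiib]=F[\IiIXiii]$, comes from distinct trees having the same image under $\Psi$, not from any relation among the values $f^{(k)}(u), g^{(k)}(u), h^{(k)}(u)$.
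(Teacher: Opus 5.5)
Your proposal is correct and is essentially the paper's proof: fix $u=0$, take $f,g,h$ to be polynomials whose Taylor coefficients, up to the largest index occurring in $v$, are free parameters, and conclude because a polynomial vanishing on all of $\mathbb{R}^{3N}$ is zero, which is the freeness of monomials in the coefficients that the paper invokes. The extra factor $2$ in the paper's evaluation of the $f$-variables is a nonzero rescaling and changes nothing.
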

		\begin{proof}
			Writing the hypothesis in coordinates, this means that for any $f,g,h \in \mathcal{C}^{\infty}(\mathbb{R}, \mathbb{R})$ one has
			\begin{equation*}
				\sum_{\beta} \lambda_{\beta} F_{f, g}^h(z^\beta) = 0 \,,
			\end{equation*}
			where the sum runs over all  $ z^\beta \in \mathfrak{M}_{\begin{tikzpicture}[scale=0.2,baseline=-2]
					\coordinate (root) at (0,0);
					\node[diff] (rootnode) at (root) {};
				\end{tikzpicture}, \derivatives, \generic}$  and the coefficients $ \lambda_{\beta} $  are all zero with the exception of a finite number. We want to prove $\lambda_{\beta}=0$ for all $\beta$. We can therefore choose specific $f, g,h$ for discriminating the various $ z^{\beta} $. Let $ m $ be the highest integer such that  there exists $ z^{\beta} $ with $  \lambda_{\beta} \neq 0 $ and $ (\beta(\begin{tikzpicture}[scale=0.2,baseline=-2]
				\coordinate (root) at (0,0);
				\node[diff] (rootnode) at (root) {};
			\end{tikzpicture},m),
			\beta( \derivatives,m ), \beta(\generic,m)) \neq (0,0,0)$. We define
			\begin{equation*}
				\begin{aligned}
					h(x) & =	h_{r_0,...,r_m}(x) = \sum_{k=0}^m r_k \frac{x^k}{k!}, \quad f(x) =	f_{s_0,...,s_m}(x) = \sum_{k=0}^m s_k \frac{x^k}{k!},
					\\  g(x) & =	g_{t_0,...,t_m}(x) = \sum_{k=0}^m t_k \frac{x^k}{k!}.
				\end{aligned}
			\end{equation*} 
			By choosing  functions $\Gamma,\sigma,h$ parametrised by $r_0,s_0,t_0,...,r_m,s_m,t_m$  one has
			\begin{equation*}
				F^h_{f,g}(z^\beta)(0) =  \prod_{k=0}^m \left( 
				\left( 	r_k \right)^{\beta(\begin{tikzpicture}[scale=0.2,baseline=-2]
						\coordinate (root) at (0,0);
						\node[diff] (rootnode) at (root) {};
					\end{tikzpicture},k)}
				\left( 2  s_k  \right)^{\beta( \derivatives,k )} \left( 	t_k  \right)^{\beta(\generic,k)} \right).
			\end{equation*}
			One can observe that we obtain a monomial in $r_0,s_0,t_0,...,r_m,s_m,t_m$ which is uniquely associated to a multi-index $\beta$. Since this family of monomials is clearly free and describes smooth functions up to Taylor polynomials, we can conclude that $v=0$.
		\end{proof}
		
		We finish this introduction to multi-indices by mentioning a recursive construction of the processes $ \Pi_y z^{\beta} $.
		One can write a hierarchy of equations
		\begin{equation*}
			\begin{aligned}
				(\partial_t - \partial_x^2)  \Pi_y z^{\beta}& = \sum_{\beta =e_{(\derivatives,k)} + \alpha_1 + \alpha_2 + \sum_{i=1}^k \beta_i} \prod_{i=1}^k \Pi_y z^{\beta_i}  \partial_x \Pi_y z^{\alpha_1} \partial_x \Pi_y z^{\alpha_2} \\ & +  \sum_{\beta =e_{(\generic,k)} + \sum_{i=1}^k \beta_i} \prod_{i=1}^k \Pi_y z^{\beta_i}  \xi
			\end{aligned}
		\end{equation*}
		where the $\alpha_i, \beta_i$ are populated multi-indices and the $ e_{(\generic,k)} $ are non-zero and equal to one on $(\generic,k)$. One has the same definition for $e_{(\derivatives,k)}$.  These equations define inductively the stochastic iterated integrals and one can insert recentering steps by adding well-chosen polynomials to get $ \Pi_y z^{\beta} $. This construction also works for decorated trees but in the case of multi-indices one groups more terms together. One can observe that the choices of $ e_{(\generic,k)}, e_{(\derivatives,k)}  $ in the decomposition of $\beta$ correspond to the choice of a root and then one chooses the branches with the $\alpha_i, \beta_i$.

		\section{Cancellations via integration by parts}
		
		In order to get a solution theory, one has to construct the new monomial basis that is the $\Pi_x \tau$. It is sufficient to get them for $\tau$ with a negative degree as the ones with a positive degree can be recovered via an extension theorem. One has the following theorem 
		
		\begin{theorem}
			For every $\tau \in \mathcal{T}$ with $ \mathrm{deg}(\tau) \leq 0 $, one can construct a renormalised monomial $ \widehat{\Pi}^{\varepsilon}_z \tau $ such that uniformly in $\varepsilon$
			\begin{equation*}
				\mathbb{E}( |(\widehat{\Pi}^{\varepsilon}_z \tau)(\varphi^\lambda_z) |^2 ) \lesssim \lambda^{2 \deg(\tau)} 
			\end{equation*}
			where $ \varphi^{\lambda}_z $ is a suitable test function rescaled around $z = (t,x)$. One has for $\bar{z} = (\bar{t},\bar{x})$
			\begin{equation*}
				\varphi^{\lambda}_z(\bar{z}) = \lambda^{-3} \varphi( \lambda^{-2}(\bar{t}-t), \lambda^{-1}(\bar{x}-x) ).
			\end{equation*}
		\end{theorem}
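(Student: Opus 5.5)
The theorem is the BPHZ convergence theorem for the model. I would prove it by renormalising each negative-degree tree with the BPHZ prescription and then reducing the moment bound to a power-counting estimate on Feynman graphs.

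\textbf{Step 1: renormalised models.} For $\tau \in \mathcal{T}$ with $\mathrm{deg}(\tau)\le 0$, write $\Pi^\varepsilon_z\tau$ for the canonical recentred model built from $\xi_\varepsilon$. I would define
\[
\widehat{\Pi}^{\varepsilon}_z = (g^\varepsilon\otimes \Pi^\varepsilon_z)\Delta^-,
\]
where $\Delta^-$ is the extraction--contraction coproduct of \cite{BHZ}. The character $g^\varepsilon$ is the BPHZ character. It is fixed by the condition that $\mathbb{E}\big(\widehat{\Pi}^\varepsilon_z \sigma\big)(z)=0$ for every $\sigma$ of negative degree.

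This step only involves the finitely many trees listed in the Proposition above, together with their subtrees. Since the equation is subcritical, the number of such trees is finite. With this choice $\widehat{\Pi}^\varepsilon_z\tau$ is well defined, and it satisfies the same recentring identities as $\Pi^\varepsilon_z\tau$.

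\textbf{Step 2: reduction to graphs.} Because $\xi$ is Gaussian, I would expand $\widehat{\Pi}^\varepsilon_z\tau$ in Wiener chaoses using the Wick product formula. The second moment $\mathbb{E}\big(|(\widehat{\Pi}^\varepsilon_z\tau)(\varphi^\lambda_z)|^2\big)$ then becomes a finite sum of integrals over Feynman graphs $G$. Each such graph is obtained by gluing two copies of $\tau$ along pairings of noise nodes, the pairings being weighted by the covariance $\varrho_\varepsilon*\varrho_\varepsilon$.

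The edges of these graphs carry $K$ or $\partial_x K$. These satisfy kernel bounds of order $2$ and $1$ respectively, with the constants uniform in $\varepsilon$. The covariance edges are bounded like $\|\cdot\|_{\mathfrak{s}}^{-3}$, uniformly in $\varepsilon$. The positive Taylor subtractions coming from recentring, and the negative subtractions coming from $g^\varepsilon$, are attached to the corresponding subgraphs.

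\textbf{Step 3: power counting.} I would decompose every kernel dyadically, $K=\sum_n K_n$, and organise the integral using Hepp sectors, i.e.\ multiscale trees. The aim is to show that every subgraph has an effective degree which is large enough after renormalisation, so that the sum over scales converges to $\lambda^{2\mathrm{deg}(\tau)}$. This is exactly the setting of the general analytic BPHZ theorem of Chandra--Hairer \cite{CH}. I would check its hypotheses: kernels of the correct order, noise subcriticality $\alpha>-2$, and the condition that every subtree $\sigma$ has $\mathrm{deg}(\sigma)>-|\mathfrak{s}|/2$ for the relevant noise-pairing subtrees. With these checked, the uniform bound follows.

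\textbf{Step 4: the main obstacle.} The main obstacle is Step 3, namely the overlapping divergences. One needs a forest formula that rearranges the $g^\varepsilon$-subtractions into Taylor remainders within each Hepp sector, so that every divergent subgraph is compensated at its own scale. In these notes I would not redo this argument. Instead, I would rely on the algebraic coherence of $\Delta^-$ with the recentring, together with the tree list from the Proposition above, to confirm that only the subgraphs allowed by that Proposition can diverge. I would then invoke \cite{CH} for the multiscale bound.
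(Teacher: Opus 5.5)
Your plan is the diagrammatic route the notes point to. The paper gives no proof of this theorem. It refers to \cite{BHZ,BR18} for the algebraic construction of the renormalised model and to \cite{CH} for the uniform bounds, mentioning the spectral-gap approach of \cite{LOTT,BN23,HS,BH23} as the alternative. Your Steps~2--4 are a fair summary of that route.

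One caveat on Step~3: the hypotheses of \cite{CH} you list do hold for space-time white noise, where every tree with at least two noises has degree at least $-1-2\kappa>-\frac32$. They do not follow from $\alpha>-2$ alone. For $\alpha\leq-\frac74$ the trees $\Xii$ and $\IXitwo$ already violate $\deg>-\frac{|\mathfrak{s}|}{2}$, so the statement should be read in the white-noise setting.

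The genuine error is in Step~1: your normalisation condition selects the wrong character.
\begin{itemize}
\item The BPHZ character is fixed on the \emph{unrecentred} pre-model $\hat{\bm{\Pi}}^{\varepsilon}=(g^\varepsilon\otimes\bm{\Pi}^{\varepsilon})\Delta^-$, by $\mathbb{E}\big((\hat{\bm{\Pi}}^{\varepsilon}\sigma)(0)\big)=0$, as the notes state. It is not fixed by a condition on the recentred model at its base point.
\item Take $\sigma=\Xii$. The branch $\IXi$ has degree $\alpha+2>0$, so $(\Pi^\varepsilon_z\Xii)(\bar z)=\xi_\varepsilon(\bar z)\big((K*\xi_\varepsilon)(\bar z)-(K*\xi_\varepsilon)(z)\big)$ vanishes identically at $\bar z=z$.
\item Modulo terms on which $g^\varepsilon$ vanishes (single noises, since the noise is centred), $\Delta^-\Xii=\Xii\otimes\mathbf{1}+\mathbf{1}\otimes\Xii$. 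Hence $\widehat\Pi^\varepsilon_z\Xii=\Pi^\varepsilon_z\Xii+g^\varepsilon(\Xii)$, and your condition forces $g^\varepsilon(\Xii)=0$, i.e.\ no renormalisation at all.
\item Then $\mathbb{E}\big((\widehat\Pi^\varepsilon_z\Xii)(\bar z)\big)=\mathbb{E}\big((\bm{\Pi}^\varepsilon\Xii)(0)\big)-\mathbb{E}\big(\xi_\varepsilon(\bar z)(K*\xi_\varepsilon)(z)\big)$. The first term is the divergent constant computed in the notes. The second stays bounded once tested against $\varphi^\lambda_z$.
\item Therefore $\mathbb{E}|(\widehat\Pi^\varepsilon_z\Xii)(\varphi^\lambda_z)|^2\geq|\mathbb{E}(\widehat\Pi^\varepsilon_z\Xii)(\varphi^\lambda_z)|^2\to\infty$ as $\varepsilon\to0$ whenever $\int\varphi\neq0$. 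No appeal to \cite{CH} can rescue the model you defined.
\end{itemize}

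The fix is to take $g^\varepsilon=\mathbb{E}\big(\bm{\Pi}^\varepsilon\tilde{\mathcal{A}}_-\,\cdot\,\big)(0)$, with $\tilde{\mathcal{A}}_-$ the negative twisted antipode, or equivalently to impose the pre-model condition above, and only then recentre. By the cointeraction of $\Delta^-$ and $\Delta^+$ from \cite{BHZ}, the recentred renormalised model is again $(g^\varepsilon\otimes\Pi^\varepsilon_z)\Delta^-$. So your formula survives with the correct character, and \cite{CH} applies to it.
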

		
		In the previous theorem, one can control only the second moment for the convergence because the noise is Gaussian. This is the formulation given in \cite{reg}. Getting this theorem in full generality for a large class of equations was a major open problem for singular SPDEs. One solves it with several techniques.
		One has to first build algebraically the renormalised objects $ \widehat{\Pi}_z^{\varepsilon} \tau $ where one has to add the renormalisation without breaking the recentering procedure. This recentering is needed to get the correct scaling behaviour in the limit. This task has been implemented for decorated trees in \cite{BHZ,BR18} and for multi-indices in \cite{LOT,BL23}. Then, it remains to prove the convergence of the renormalised monomials.
		There are two main approaches
		\begin{itemize}
			\item A diagrammatic approach which builds Feynman diagrams and proceeds with the BPHZ renormalisation run on these diagrams. 
			One can see how and why the renormalisation has to be put on these diagrams. This was performed in \cite{CH}.
			\item An inductive approach based on a spectral gap assumption on the noise. The main idea is to control the moments of  $ \widehat{\Pi}_z^{\varepsilon} \tau  $ via the moments of its Malliavin derivative $ \delta \widehat{\Pi}_z^{\varepsilon} \tau  $ which can be obtained via a reconstruction argument. The renormalisation is used to remove the divergent part in the spectral gap inequality that is $ \mathbb{E}(\widehat{\bm{\Pi}}^{\varepsilon} \tau ) $. Here, $\widehat{\bm{\Pi}}^{\varepsilon}$ is called the pre-model, the stochastic iterated integrals without any recentering.  This approach was first proposed in \cite{LOTT} for multi-indices, and it was then extended to decorated trees in \cite{BN23,HS,BH23}.
		\end{itemize}
		Let us stress that the spectral gap is also used in the convergence proof of stochastic data constructed via iterated paraproducts (see \cite{BM26}). It is also close in spirit to the flow recursion that allows one to perform an inductive proof in \cite{Duc21}.
		Describing precisely this convergence procedure is beyond the scope of this course. We will briefly explain the diagrammatic approach for a few terms of the general KPZ equation and observe some cancellations among renormalisation constants. These cancellations are obtained via some integration by parts identities and a key identity on the heat kernel. 
		We start with $\tau =  \Xii$ and one has
		\begin{equation*}
			(\Pi_z^{\varepsilon}  \Xii)(\bar{z}) = (\bm{\Pi}^{\varepsilon} \Xii)(\bar{z}) -  \xi_{\varepsilon}(\bar{z}) (K * \xi_{\varepsilon})(z), \quad (\bm{\Pi}^{\varepsilon} \Xii)(\bar{z}) = (\xi_{\varepsilon} K * \xi_{\varepsilon})(\bar{z}).
		\end{equation*}
		Then, taking the expectation and using the fact that $(\bm{\Pi}^{\varepsilon}  \Xii)(\bar{z})$ is invariant by translation in law, one has
		\begin{equation*}
			\begin{aligned}
				\mathbb{E}( (\bm{\Pi}^{\varepsilon} \Xii)(\bar{z}) ) &  = \mathbb{E}( (\xi_{\varepsilon} K * \xi_{\varepsilon})(0))
				=  \mathbb{E}( \int \xi_{\varepsilon}(0) K(-z) \xi_{\varepsilon}(z) dz )
				\\ &  = \int K(-z) \mathbb{E}( \xi_{\varepsilon}(0)  \xi_{\varepsilon}(z)) dz 
				= \int K(-z) \varrho_{\varepsilon}^2(z) dz
			\end{aligned}
		\end{equation*}
		where $ \varrho_{\varepsilon}^2(z) = (\varrho_{\varepsilon} * \varrho_{\varepsilon})(z) $ which converges to a Dirac delta distribution at the origin when $\varepsilon$ tends to zero. As the heat kernel is singular at zero, this expectation is divergent. A similar computation shows that the term $\mathbb{E}(\xi_{\varepsilon}(\bar{z}) (K * \xi_{\varepsilon})(z))$ is not divergent. Therefore, the renormalised stochastic iterated integral is given by
		\begin{equation*}
			(\widehat{\Pi}^{\varepsilon}_z  \Xii)(\bar{z}) = 	({\Pi}^{\varepsilon}_z  \Xii)(\bar{z})	 - \mathbb{E}( (\bm{\Pi}^{\varepsilon} \Xii  )(0)).
		\end{equation*}
		It turns out that this renormalisation is compensated by another. Indeed, one has
		\begin{equation*}
			(\Pi_z^{\varepsilon} \IXitwo)(\bar{z})  = (\bm{\Pi}^{\varepsilon} \IXitwo)(\bar{z}) =  (\partial_x K * \xi_{\varepsilon})^2(\bar{z}).
		\end{equation*}
		Then
		\begin{equation*}
			\begin{aligned}
				\mathbb{E}( (\Pi_z^{\varepsilon}  \IXitwo)(\bar{z}) ) &  = \mathbb{E}( (\partial_x K * \xi_{\varepsilon})^2(0)  )
				=  \mathbb{E}( (\int \partial_x K(-z) \xi_{\varepsilon}(z) dz)^2 )
				\\ &  = \int \partial_x K(-z) \partial_x K(-\bar{z}) \mathbb{E}( \xi_{\varepsilon}(z)  \xi_{\varepsilon}(\bar{z})) dz d \bar{z}
				\\ & = 	\int \partial_x K(-z) \partial_x K(-\bar{z}) \varrho_{\varepsilon} * \varrho_{\varepsilon}(z-\bar{z})  dz d \bar{z}.
			\end{aligned}
		\end{equation*}
		Here, $(\partial_x K)^2$ is not integrable around zero. One sets
		\begin{equation*}
			(\widehat{\Pi}^{\varepsilon}_z  \IXitwo)(\bar{z}) = (\partial_x K * \xi_{\varepsilon})^2(\bar{z}) - \mathbb{E}( (\bm{\Pi}^{\varepsilon} \IXitwo  )(0)).
		\end{equation*}
		In order to see the cancellation, we use the following identity on the heat kernel:
		\begin{equation} \label{key_identity}
			\int \partial_x K(z_1 - z) \partial_x K(z_2 - z) dz = \frac{1}{2} ( K(z_1 - z_2) + K(z_2 -z_1))
		\end{equation}
		which is a consequence of the fact that $K$ is the fundamental solution of the heat equation. In practice, one cannot use exactly \eqref{key_identity} as one works with a mollified kernel $\varrho_{\varepsilon} * K$. Moreover, one does not use the heat kernel but a truncated version. Therefore, the identity is true up to an error term $R_{\varepsilon}$ that converges to zero. We will omit this error term and proceed formally with the computations of the cancellations and use the symbol $\approx$.
		By applying \eqref{key_identity} with $z_1 =z_2$, one gets
		\begin{equation} \label{first_cancellation}
			\mathbb{E}( (\bm{\Pi}^{\varepsilon} \Xii  )(0))  \approx 
			\mathbb{E}( (\bm{\Pi}^{\varepsilon}  \IXitwo  )(0)).
		\end{equation}
		To compute other cancellations, one needs more graphical notations. We will use decorated graphs where nodes are integrable variables and  edges are kernels evaluated at their end points. Some edges will be oriented.
		\begin{itemize}
			\item An edge of type $
			\begin{tikzpicture}[baseline=0.6cm,scale=0.35]
				\node at (3,2) [dot,label=below: \scriptsize{$z_1$}] (1) {};
				\node at (0,2) [dot,label=below: \scriptsize{$z_2$}] (2) {};
				\draw[kernel,->] (2) to (1);
			\end{tikzpicture} $
			encodes the kernel $K(z_1 - z_2)$.
			\item An edge of type $
			\begin{tikzpicture}[baseline=0.6cm,scale=0.35]
				\node at (3,2) [dot,label=below: \scriptsize{$z_1$}] (1) {};
				\node at (0,2) [dot,label=below: \scriptsize{$z_2$}] (2) {};
				\draw[kernels,->] (2) to (1);
			\end{tikzpicture} $
			encodes the kernel $ \partial_x K(z_1 - z_2)$.
			\item A node of type  $
			\begin{tikzpicture}[baseline=0.6cm,scale=0.35]
				\node at (0,2) [root] (2) {};
			\end{tikzpicture} $ means that the variable is evaluated at zero. For example, one has that $
			\begin{tikzpicture}[baseline=0.6cm,scale=0.35]
				\node at (3,2) [dot,label=below: \scriptsize{$z_1$}] (1) {};
				\node at (0,2) [root] (2) {};
				\draw[kernel,->] (2) to (1);
			\end{tikzpicture} $
			encodes the kernel $K(z_1)$.
			\item An edge of type 
			$	\begin{tikzpicture}[baseline=0.6cm,scale=0.35]
				\node at (3,2) [dot,label=below: \scriptsize{$z_1$}] (1) {};
				\node at (0,2) [dot,label=below: \scriptsize{$z_2$}] (2) {};
				\draw[rho] (2) to (1);
			\end{tikzpicture} $ encodes a mollifier $ \varrho(z_1 - z_2) $. Due to symmetry of the mollifier we do not put any direction on this edge.
		\end{itemize}
		In the sequel, we will not distinguish  between $K$ and $\varrho_{\varepsilon} * K $ in the graphical notation. We will represent the mollified  edge when it is not convolved with a kernel.
		One can reinterpret \eqref{key_identity} graphically as 
		\begin{equation} \label{graphical_K}
			\begin{tikzpicture}[baseline=0.6cm,scale=0.35]
				\node at (-3,2) [dot,label=below: \scriptsize{$z_1$}] (0) {};
				\node at (3,2) [dot,label=below: \scriptsize{$z_2$}] (1) {};
				\node at (0,2) [dot,label=below: \scriptsize{$z$}] (2) {};
				\draw[kernels,->] (2) to (1);
				\draw[kernels,->] (2) to (0);
			\end{tikzpicture} = \frac{1}{2} \begin{tikzpicture}[baseline=0.6cm,scale=0.35]
				\node at (-1.5,2) [dot,label=below: \scriptsize{$z_1$}] (0) {};
				\node at (1.5,2) [dot,label=below: \scriptsize{$z_2$}] (1) {};
				\draw[kernel,->] (1) to (0);
			\end{tikzpicture}  + \frac{1}{2} \begin{tikzpicture}[baseline=0.6cm,scale=0.35]
				\node at (-1.5,2) [dot,label=below: \scriptsize{$z_1$}] (0) {};
				\node at (1.5,2) [dot,label=below: \scriptsize{$z_2$}] (1) {};
				\draw[kernel,->] (0) to (1);
			\end{tikzpicture}
		\end{equation}
		where we have denoted the associated variable below each node. One can perform some pictorial manipulations. For example,
		\begin{equation} \label{permute_edge}
			\begin{tikzpicture}[baseline=0.6cm,scale=0.35]
				\node at (-3,2) [dot,label=below: \scriptsize{$z_1$}] (0) {};
				\node at (3,2) [dot,label=below: \scriptsize{$z_2$}] (1) {};
				\node at (0,2) [dot,label=below: \scriptsize{$z$}] (2) {};
				\draw[kernels,->] (1) to (2);
				\draw[kernels,->] (0) to (2);
			\end{tikzpicture} = 	\begin{tikzpicture}[baseline=0.6cm,scale=0.35]
				\node at (-3,2) [dot,label=below: \scriptsize{$z_1$}] (0) {};
				\node at (3,2) [dot,label=below: \scriptsize{$z_2$}] (1) {};
				\node at (0,2) [dot,label=below: \scriptsize{$z$}] (2) {};
				\draw[kernels,->] (2) to (1);
				\draw[kernels,->] (2) to (0);
			\end{tikzpicture} 
		\end{equation}
		which corresponds to the change of variable	 $ z' = z_1+z_2 - z $
		\begin{equation*}
			\int \partial_x K(z_1 - z) \partial_x K(z_2 - z) dz =  \int \partial_x K(z'-z_2) \partial_x K(z' - z_1) dz'.
		\end{equation*}
		Another rule that we will use frequently is the following. In the graphs considered, there will be one distinguished node coloured green associated with the variable zero. As the integrals associated with these graphs are invariant under translation, one can replace this distinguished node with another. For example, one has in the next graph
		\begin{equation*}
			\begin{tikzpicture}[baseline=0.6cm,scale=0.35]
				\node at (0,0) [root] (0) {};
				\node at (2,2) [dot] (1) {};
				\node at (-2,2) [dot] (2) {};
				\node at (0,4) [dot] (3) {};
				\node at (0,2) [dot] (4) {};
				\draw[kernels,->] (1) to (0);
				\draw[kernels,->] (2) to (0);
				\draw[kernels,->] (3) to (1);
				\draw[kernels,->] (3) to (2);
				\draw[kernels,->] (4) to (2);
				\draw[kernels,->] (4) to (1);
			\end{tikzpicture} \, =  \, 	\begin{tikzpicture}[baseline=0.6cm,scale=0.35]
				\node at (0,0) [dot] (0) {};
				\node at (2,2) [root] (1) {};
				\node at (-2,2) [dot] (2) {};
				\node at (0,4) [dot] (3) {};
				\node at (0,2) [dot] (4) {};
				\draw[kernels,->] (1) to (0);
				\draw[kernels,->] (2) to (0);
				\draw[kernels,->] (3) to (1);
				\draw[kernels,->] (3) to (2);
				\draw[kernels,->] (4) to (2);
				\draw[kernels,->] (4) to (1);
			\end{tikzpicture}. 
		\end{equation*}
		We begin by redoing the computation of the cancellation \eqref{first_cancellation} with these graphical notations. One has
		\begin{equation*}
			\mathbb{E}( (\bm{\Pi}^{\varepsilon}  \IXitwo  )(0)) = \begin{tikzpicture}[baseline=0.6cm,scale=0.35]
				\node at (0,1) [root] (0) {};
				\node at (0,4) [dot] (1) {};
				\draw[decorate,shorten >=2pt,shorten <=2pt, decoration={zigzag, segment length=4pt, amplitude=1pt, post length = 5pt, pre length = 2pt},->] (1) to[bend right = 60] (0);
				\draw[decorate,shorten >=2pt,shorten <=2pt, decoration={zigzag, segment length=4pt, amplitude=1pt, post length = 5pt, pre length = 2pt},->] (1) to[bend left = 60] (0);
			\end{tikzpicture} \approx \begin{tikzpicture}[baseline=0.6cm,scale=0.35]
				\node at (0,1) [root] (0) {};
				\node at (0,4) [dot] (1) {};
				\draw[decorate,shorten >=2pt,shorten <=2pt, decoration={ segment length=4pt, amplitude=1pt, post length = 5pt, pre length = 2pt},->] (1) to[bend right = 60] (0);
				\draw[decorate,densely dashed,semithick,shorten >=2pt,shorten <=2pt, decoration={ segment length=4pt, amplitude=1pt, post length = 5pt, pre length = 2pt},->] (1) to[bend left = 60] (0);
			\end{tikzpicture} = \mathbb{E}( (\bm{\Pi}^{\varepsilon} \Xii  )(0))
		\end{equation*}
		where we have applied \eqref{graphical_K} with $z_1 = z_2 = 0$. Now, we want to consider more subtle cancellations by looking at the two stochastic integrals below:
		\begin{equation*}
			\begin{aligned}
				(\bm{\Pi}^{\varepsilon} \Xitwo ) & = ( \partial_x K * (    (\partial_xK *(  \partial_x K *\xi_{\varepsilon})^2) (\partial_x K *  \xi_{\varepsilon}) )) (\partial_x K *  \xi_{\varepsilon})
				\\
				(\bm{\Pi}^{\varepsilon} \iiIiXiiii)	 & =  (\partial_x K *( (\partial_x K * \xi_{\varepsilon})^2 ))^2.
			\end{aligned}
		\end{equation*}
		When one wants to compute the expectation of these stochastic integrals, one has to compute terms of the form $ \mathbb{E}(\prod_{i=1}^4\xi^{\varepsilon}(z_i)) $. As the $\xi^{\varepsilon}(z_i)$ are Gaussian variables, one can use Isserlis's theorem. Let $I$ be a finite set, and let $(X_i)_{i \in I}$ be a collection of
		 jointly centred Gaussian random variables. Then
		\begin{equation} \label{Isserles}
			\mathbb{E}( \prod_{i \in I} X_i ) = \sum_{\mathfrak{p}
				\in \mathcal{P}(I)} \prod_{(i,j) \in \mathfrak{p}} \mathbb{E}(X_i X_j)
		\end{equation}
		where $ \mathcal{P}(I) $ are partitions of $I$ with two elements of $I$ in each block of the partition. In the stochastic integrals, this corresponds to pair the noises $\xi_{\varepsilon}$. Then, one has
		\begin{equation*}
			\begin{aligned}
				\mathbb{E}((\bm{\Pi}^{\varepsilon} \Xitwo )(0)) & = 2 (\bm{\Pi}^{\varepsilon} \Xitwoo)(0) + (\bm{\Pi}^{\varepsilon} \Xitwooo)(0) =    2 \begin{tikzpicture}[baseline=0.6cm,scale=0.35]
					\node at (0,0) [root] (0) {};
					\node at (0,3) [dot] (1) {};
					\node at (-2,3) [dot] (2) {};
					\node at (2,4.5) [dot] (3) {};
					
					\node at (0,6) [dot] (4) {};
					
					\draw[kernels,->] (1) to (0);
					\draw[kernels,->] (2) to (0);
					\draw[kernels,->] (3) to (4);
					\draw[kernels,->] (3) to (1);
					\draw[kernels,->] (2) to (4);
					\draw[kernels,->] (4) to (1);
				\end{tikzpicture} +  \begin{tikzpicture}[baseline=0.6cm,scale=0.35]
					\node at (0,0) [root] (0) {};
					\node at (0,3) [dot] (1) {};
					\node at (-2,1.5) [dot] (2) {};
					\node at (0,9) [dot] (3) {};
					
					\node at (0,6) [dot] (4) {};
					
					\draw[kernels,->] (1) to (0);
					\draw[kernels,->] (2) to (0);
					\draw[decorate,shorten >=2pt,shorten <=2pt, decoration={zigzag, segment length=4pt, amplitude=1pt, post length = 5pt, pre length = 2pt},->] (3) to[bend right = 60] (4);
					\draw[decorate,shorten >=2pt,shorten <=2pt, decoration={zigzag, segment length=4pt, amplitude=1pt, post length = 5pt, pre length = 2pt},->] (3) to[bend left = 60] (4);
					\draw[kernels,->] (2) to (1);
					\draw[kernels,->] (4) to (1);
				\end{tikzpicture}
			\end{aligned}.
		\end{equation*}
		Here, we have added an extra colour to the noises to represent two pairs. Now, one understands the map $\bm{\Pi}^{\varepsilon}$ as mapping decorated trees with pairs to the correct Feynman diagram. Using the fact that the top loop in the second term is invariant under translation, one has
		\begin{equation*}
			\begin{tikzpicture}[baseline=0.6cm,scale=0.35]
				\node at (0,0) [root] (0) {};
				\node at (0,3) [dot] (1) {};
				\node at (-2,1.5) [dot] (2) {};
				\node at (0,9) [dot] (3) {};
				
				\node at (0,6) [dot] (4) {};
				
				\draw[kernels,->] (1) to (0);
				\draw[kernels,->] (2) to (0);
				\draw[decorate,shorten >=2pt,shorten <=2pt, decoration={zigzag, segment length=4pt, amplitude=1pt, post length = 5pt, pre length = 2pt},->] (3) to[bend right = 60] (4);
				\draw[decorate,shorten >=2pt,shorten <=2pt, decoration={zigzag, segment length=4pt, amplitude=1pt, post length = 5pt, pre length = 2pt},->] (3) to[bend left = 60] (4);
				\draw[kernels,->] (2) to (1);
				\draw[kernels,->] (4) to (1);
			\end{tikzpicture} = \begin{tikzpicture}[baseline=0.6cm,scale=0.35]
				\node at (0,0) [root] (0) {};
				\node at (0,3) [dot] (1) {};
				\node at (-2,1.5) [dot] (2) {};

				\node at (0,6) [dot] (4) {};
				
				\draw[kernels,->] (1) to (0);
				\draw[kernels,->] (2) to (0);
				\draw[kernels,->] (2) to (1);
				\draw[kernels,->] (4) to (1);
			\end{tikzpicture} \, \, \, \begin{tikzpicture}[baseline=0.6cm,scale=0.35]
				\node at (0,1) [root] (0) {};
				\node at (0,4) [dot] (1) {};
				\draw[decorate,shorten >=2pt,shorten <=2pt, decoration={zigzag, segment length=4pt, amplitude=1pt, post length = 5pt, pre length = 2pt},->] (1) to[bend right = 60] (0);
				\draw[decorate,shorten >=2pt,shorten <=2pt, decoration={zigzag, segment length=4pt, amplitude=1pt, post length = 5pt, pre length = 2pt},->] (1) to[bend left = 60] (0);
			\end{tikzpicture} = 0.
		\end{equation*}
		The last equality is obtained by replacing the heat kernel $K$ with a version that cancels out monomials   up to a certain degree by convolution. Indeed,
		one has
		\begin{equation*}
			\begin{tikzpicture}[baseline=0.6cm,scale=0.35]
				\node at (0,1) [root] (0) {};
				\node at (0,4) [dot] (1) {};
				\draw[decorate,shorten >=2pt,shorten <=2pt, decoration={zigzag, segment length=4pt, amplitude=1pt, post length = 5pt, pre length = 2pt},->] (1) to (0);
			\end{tikzpicture} = \int K(-z) dz = 0.
		\end{equation*}
		As a second example, one has 
		\begin{equation*}
			\begin{aligned}
				\mathbb{E}( 	(\bm{\Pi}^{\varepsilon} \iiIiXiiii)	(0)) & = 2 ( \bm{\Pi}^{\varepsilon}  \iiIiXiiiibb )(0)  +  ( \bm{\Pi}^{\varepsilon}  \iiIiXiiiiaa )(0) = 2 \,	\begin{tikzpicture}[baseline=0.6cm,scale=0.35]
					\node at (0,0) [root] (0) {};
					\node at (2,2) [dot] (1) {};
					\node at (-2,2) [dot] (2) {};
					\node at (0,4) [dot] (3) {};
					\node at (0,2) [dot] (4) {};
					\draw[kernels,->] (1) to (0);
					\draw[kernels,->] (2) to (0);
					\draw[kernels,->] (3) to (1);
					\draw[kernels,->] (3) to (2);
					\draw[kernels,->] (4) to (2);
					\draw[kernels,->] (4) to (1);
				\end{tikzpicture} + 	\begin{tikzpicture}[baseline=0.6cm,scale=0.35]
					\node at (0,0) [root] (0) {};
					\node at (2,2) [dot] (1) {};
					\node at (-2,2) [dot] (2) {};
					\node at (2,5) [dot] (3) {};
					\node at (-2,5) [dot] (4) {};
					\draw[kernels,->] (1) to (0);
					\draw[kernels,->] (2) to (0);
					\draw[decorate,shorten >=2pt,shorten <=2pt, decoration={zigzag, segment length=4pt, amplitude=1pt, post length = 5pt, pre length = 2pt},->] (3) to[bend right = 60] (1);
					\draw[decorate,shorten >=2pt,shorten <=2pt, decoration={zigzag, segment length=4pt, amplitude=1pt, post length = 5pt, pre length = 2pt},->] (3) to[bend left = 60] (1);
					\draw[decorate,shorten >=2pt,shorten <=2pt, decoration={zigzag, segment length=4pt, amplitude=1pt, post length = 5pt, pre length = 2pt},->] (4) to[bend right = 60] (2);
					\draw[decorate,shorten >=2pt,shorten <=2pt, decoration={zigzag, segment length=4pt, amplitude=1pt, post length = 5pt, pre length = 2pt},->] (4) to[bend left = 60] (2);
				\end{tikzpicture}.
			\end{aligned}
		\end{equation*}
		For the same reason as before, the second  Feynman diagram above is zero. In the next proposition, we state the cancellation between the two renormalisation constants.
		\begin{proposition}
			One has
			\begin{equation*} \label{cancellation_1}
				2 \,	\mathbb{E}((\bm{\Pi}^{\varepsilon} \Xitwo )(0)) + \mathbb{E}( 	(\bm{\Pi}^{\varepsilon} \iiIiXiiii)	(0)) \approx 0.
			\end{equation*}
		\end{proposition}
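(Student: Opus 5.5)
The plan is to reduce both expectations to single Feynman diagrams using the computations just performed, then transform one diagram into (minus) the other using the heat kernel identity \eqref{graphical_K} together with the translation and reorientation rules.

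\textbf{Step 1: reduce to single diagrams.} The loop diagrams vanish, as shown above, so
\begin{equation*}
\mathbb{E}((\bm{\Pi}^{\varepsilon} \Xitwo )(0)) = 2\, G_1, \qquad \mathbb{E}( (\bm{\Pi}^{\varepsilon} \iiIiXiiii)(0)) = 2\, G_2 .
\end{equation*}
Here $G_1$ and $G_2$ are the first diagrams in the two displayed expansions. Each has five nodes (one green) and six thick edges, i.e.\ six factors $\partial_x K$, and the mollifier edges are absorbed into the kernels.

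After this step the claim reads $4 G_1 + 2 G_2 \approx 0$, that is,
\begin{equation*}
2 G_1 \approx - G_2 .
\end{equation*}

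\textbf{Step 2: locate a pair of thick edges to contract.} I would look in each diagram for a node $z$ with exactly two outgoing thick edges. Two natural candidates are:
\begin{itemize}
\item in $G_1$, the node $(-2,3)$, which points to the root and to $(0,6)$;
\item in $G_2$, the nodes $(0,4)$ and $(0,2)$, each pointing to $(2,2)$ and $(-2,2)$.
\end{itemize}
If such a node does not already carry two outgoing edges, I first use \eqref{permute_edge} to reverse its pair of edges.

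\textbf{Step 3: contract.} Applying \eqref{graphical_K} to such a pair integrates out $z$. This replaces the two thick edges by
\begin{equation*}
\tfrac12\bigl(\text{thin edge } z_1\to z_2\bigr) + \tfrac12\bigl(\text{thin edge } z_2\to z_1\bigr).
\end{equation*}
Both $G_1$ and $G_2$ then become combinations of diagrams with four nodes, four thick edges and one thin edge $K$.

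\textbf{Step 4: compare the resulting diagrams.} I would move the green node by translation invariance and apply \eqref{permute_edge} again, with the aim of showing that the diagrams produced from $G_1$ match those from $G_2$ up to sign. The source of the sign is the odd number of derivatives. A change of variables $x\mapsto -x$ flips the sign of every $\partial_x K$ and leaves $K$ and the mollifier unchanged, using the symmetry \eqref{sym_rho}. Since the edge counts are the same in both diagrams, this alone does not produce a relative sign. The minus sign must therefore come from using \eqref{graphical_K} in reversed form, where the orientation convention on $\partial_x K(z_1-z)$ versus $\partial_x K(z-z_1)$ introduces a factor $-1$ each time one edge is flipped.

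The factor $2$ should come from the two symmetric halves in \eqref{graphical_K}, or equivalently from the two contractible nodes in $G_2$ versus the single one in $G_1$.

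\textbf{Main obstacle.} The delicate part is the sign and orientation bookkeeping. Reversing a single thick edge costs a factor $-1$ because $\partial_x K$ is odd in $x$, whereas \eqref{permute_edge} reverses a pair at no cost.

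My first attempt will be to contract the pair at $(0,4)$ in $G_2$ and the corresponding pair in $G_1$. I will track the sign of each flipped edge explicitly, and then check that the two reduced diagrams coincide after translating the root. If they do not coincide directly, I will perform a second contraction to reach diagrams with two thin edges, where the comparison is simpler.

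Throughout, the error terms coming from truncating and mollifying the heat kernel are dropped, consistently with the use of $\approx$.
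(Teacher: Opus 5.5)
\textbf{Gap: contractions alone cannot reduce $G_1$; the integration by parts is missing.} Your Step 1 is fine, but Steps 2--4 cannot close. $G_1$ has two source nodes, $(-2,3)$ and also $(2,4.5)$, whose thick edges go to $(0,3)$ and $(0,6)$. Contracting both with \eqref{graphical_K} and discarding every term that contains an oriented cycle leaves
\[
G_1\approx\tfrac14 A,\qquad A=\int \partial_xK(-y)\,\partial_xK(y-w)\,K(-w)\,K(y-w)\,dy\,dw .
\]
The cycle-discarding step is itself an ingredient you never state. $K$ vanishes for negative times, so an oriented cycle forces an impossible ordering of times, and this is what kills three of the four orientation choices. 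Your ``loop diagrams'' in Step 1 are only the self-paired bubbles.

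In $A$ the two remaining thick edges form a chain $w\to y\to 0$. No node carries two thick edges of the same orientation, so neither \eqref{graphical_K} nor \eqref{permute_edge} applies, whichever node you make green. The missing idea is the integration by parts in the paper's proof. One writes $\partial_xK(y-w)K(y-w)=-\tfrac12\partial_{x_w}\bigl(K(y-w)^2\bigr)$ and moves $\partial_{x_w}$ onto $K(-w)$. This gives $A=-\tfrac12\int\partial_xK(-y)\,\partial_xK(-w)\,K(y-w)^2$, a pair of thick edges into the root. After moving the root to $y$ and applying \eqref{permute_edge} and \eqref{graphical_K}, one gets $A\approx-\tfrac14X$ with $X=\int K(z)^3\,dz$, hence $G_1\approx-X/16$.

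\textbf{The sign mechanism in Step 4 is wrong.} A single edge cannot be reversed at the price of $-1$. The kernels $\partial_xK(z_1-z)$ and $\partial_xK(z-z_1)$ have disjoint supports in time; $\partial_xK$ is odd in $x$ only. The only admissible reversal is \eqref{permute_edge}, which rewires a pair through $z'=z_1+z_2-z$ and carries no sign. The relative minus sign comes from the integration by parts. Moreover, the diagrams from $G_1$ and $G_2$ are never matched term by term. Both are reduced to multiples of the single diagram $X$ (three thin edges), and the coefficients are compared. Your node count for the factor $2$ is also off: $G_1$ has two source nodes, and $G_2$ has three after moving the root.

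\textbf{Warning about the target.} Contracting the three source nodes of $G_2$ gives $\int S(w)^3\,dw$ with $S(w)=\tfrac12\bigl(K(w)+K(-w)\bigr)$. Two orientations are cycle-free: all thin edges into the root, or all out of it. They are equal by $w\mapsto -w$, so $G_2\approx X/4$. The paper's proof records only the first of these, with coefficient $\tfrac18$, and that is what yields the coefficient $2$ in the statement. Carried out with both terms, the computation gives $4G_1\approx-G_2$, that is, $4\,\mathbb{E}((\bm{\Pi}^{\varepsilon}\Xitwo)(0))+\mathbb{E}((\bm{\Pi}^{\varepsilon}\iiIiXiiii)(0))\approx0$. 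This is exactly the combination in which these two trees enter $(\partial_x u)^2$ for KPZ ($f=g=1$). So your reformulated target $2G_1\approx-G_2$ is not reachable, even once the integration by parts is added.
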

		\begin{proof}
			One starts to notice that
			\begin{equation*}
				\begin{tikzpicture}[baseline=0.6cm,scale=0.35]
					\node at (0,0) [root] (0) {};
					\node at (0,3) [dot] (1) {};
					\node at (-2,3) [dot] (2) {};
					\node at (2,4.5) [dot] (3) {};
					
					\node at (0,6) [dot] (4) {};
					
					\draw[kernels,->] (1) to (0);
					\draw[kernels,->] (2) to (0);
					\draw[kernels,->] (3) to (4);
					\draw[kernels,->] (3) to (1);
					\draw[kernels,->] (2) to (4);
					\draw[kernels,->] (4) to (1);
				\end{tikzpicture} \,  \approx \, \frac{1}{4}
				\begin{tikzpicture}[baseline=0.6cm,scale=0.35]
					\node at (0,0) [root] (0) {};
					\node at (0,3) [dot] (1) {};
					\node at (0,6) [dot] (4) {};
					\draw[kernel,->] (4) to[bend right=60] (0);
					\draw[kernel,->] (4) to[bend left=60] (1);
					\draw[kernels,->] (1) to (0);
					\draw[kernels,->] (4) to (1);
				\end{tikzpicture} \, + \, \frac{1}{4}
				\begin{tikzpicture}[baseline=0.6cm,scale=0.35]
					\node at (0,0) [root] (0) {};
					\node at (0,3) [dot] (1) {};
					\node at (0,6) [dot] (4) {};
					\draw[kernel,<-] (4) to[bend right=60] (0);
					\draw[kernel,<-] (4) to[bend left=60] (1);
					\draw[kernels,->] (1) to (0);
					\draw[kernels,->] (4) to (1);
				\end{tikzpicture} \, + \, \frac{1}{4} \begin{tikzpicture}[baseline=0.6cm,scale=0.35]
					\node at (0,0) [root] (0) {};
					\node at (0,3) [dot] (1) {};
					\node at (0,6) [dot] (4) {};
					\draw[kernel,<-] (4) to[bend right=60] (0);
					\draw[kernel,->] (4) to[bend left=60] (1);
					\draw[kernels,->] (1) to (0);
					\draw[kernels,->] (4) to (1);
				\end{tikzpicture} \, + \, \frac{1}{4} \begin{tikzpicture}[baseline=0.6cm,scale=0.35]
					\node at (0,0) [root] (0) {};
					\node at (0,3) [dot] (1) {};
					\node at (0,6) [dot] (4) {};
					\draw[kernel,->] (4) to[bend right=60] (0);
					\draw[kernel,<-] (4) to[bend left=60] (1);
					\draw[kernels,->] (1) to (0);
					\draw[kernels,->] (4) to (1);
				\end{tikzpicture}. 
			\end{equation*}
			Then, by an integration by parts, one gets
			\begin{equation*}
				\begin{tikzpicture}[baseline=0.6cm,scale=0.35]
					\node at (0,0) [root] (0) {};
					\node at (0,3) [dot] (1) {};
					\node at (0,6) [dot] (4) {};
					\draw[kernel,->] (4) to[bend right=60] (0);
					\draw[kernel,->] (4) to[bend left=60] (1);
					\draw[kernels,->] (1) to (0);
					\draw[kernels,->] (4) to (1);
				\end{tikzpicture}  \, = \,  - \frac{1}{2} 	\begin{tikzpicture}[baseline=0.6cm,scale=0.35]
					\node at (0,0) [root] (0) {};
					\node at (0,3) [dot] (1) {};
					\node at (0,6) [dot] (4) {};
					\draw[decorate,shorten >=2pt,shorten <=2pt, decoration={zigzag, segment length=4pt, amplitude=1pt, post length = 5pt, pre length = 2pt},->] (4) to[bend right = 60] (0);
					\draw[kernels,->] (4) to[bend left=60] (1);
					\draw[kernels,->] (1) to (0);
					\draw[kernel,->] (4) to (1);
				\end{tikzpicture}  \, =  \,  - \frac{1}{2} 	\begin{tikzpicture}[baseline=0.6cm,scale=0.35]
					\node at (0,0) [dot] (0) {};
					\node at (0,3) [root] (1) {};
					\node at (0,6) [dot] (4) {};
					\draw[decorate,shorten >=2pt,shorten <=2pt, decoration={zigzag, segment length=4pt, amplitude=1pt, post length = 5pt, pre length = 2pt},->] (0) to[bend left = 60] (4);
					\draw[kernels,->] (4) to[bend left=60] (1);
					\draw[kernels,->] (0) to (1);
					\draw[kernel,->] (4) to (1); 
				\end{tikzpicture} 
				\, \approx  \,  - \frac{1}{4} \,	\begin{tikzpicture}[baseline=0.6cm,scale=0.35]
					\node at (0,1) [root] (0) {};
					\node at (0,4) [dot] (1) {};
					\draw[kernel,->] (1) to[bend right=60] (0);
					\draw[kernel,->] (1) to[bend left=60] (0);
					\draw[kernel,->] (1) to (0);
				\end{tikzpicture}   - \frac{1}{4} \,	\begin{tikzpicture}[baseline=0.6cm,scale=0.35]
					\node at (0,1) [root] (0) {};
					\node at (0,4) [dot] (1) {};
					\draw[kernel,->] (1) to[bend left=60] (0);
					\draw[kernel,->] (0) to[bend left=60] (1);
					\draw[kernel,->] (1) to (0);
				\end{tikzpicture}.
			\end{equation*}	
			From the previous two computations, one can neglect all the terms with a direct loop, meaning that one can follow a path within the Feynman diagram following the orientation of the edges and then come back to the node one started with. This is due to the fact that the heat kernel is non-anticipative (zero for negative time). The times in the integration are ordered according to the orientation of the edges. This idea was first used in \cite{wong} for computing renormalisation constants.
			Therefore
			\begin{equation*}
				\mathbb{E}((\bm{\Pi}^{\varepsilon} \Xitwo )(0))	\approx	- \frac{1}{8} \,	\begin{tikzpicture}[baseline=0.6cm,scale=0.35]
					\node at (0,1) [root] (0) {};
					\node at (0,4) [dot] (1) {};
					\draw[kernel,->] (1) to[bend right=60] (0);
					\draw[kernel,->] (1) to[bend left=60] (0);
					\draw[kernel,->] (1) to (0);
				\end{tikzpicture}.
			\end{equation*}
			One the other hand
			\begin{equation*}
				\begin{tikzpicture}[baseline=0.6cm,scale=0.35]
					\node at (0,0) [root] (0) {};
					\node at (2,2) [dot] (1) {};
					\node at (-2,2) [dot] (2) {};
					\node at (0,4) [dot] (3) {};
					\node at (0,2) [dot] (4) {};
					\draw[kernels,->] (1) to (0);
					\draw[kernels,->] (2) to (0);
					\draw[kernels,->] (3) to (1);
					\draw[kernels,->] (3) to (2);
					\draw[kernels,->] (4) to (2);
					\draw[kernels,->] (4) to (1);
				\end{tikzpicture} \, = \, 
				\begin{tikzpicture}[baseline=0.6cm,scale=0.35]
					\node at (0,0) [dot] (0) {};
					\node at (2,2) [root] (1) {};
					\node at (-2,2) [dot] (2) {};
					\node at (0,4) [dot] (3) {};
					\node at (0,2) [dot] (4) {};
					\draw[kernels,->] (0) to (1);
					\draw[kernels,->] (0) to (2);
					\draw[kernels,->] (3) to (1);
					\draw[kernels,->] (3) to (2);
					\draw[kernels,->] (4) to (2);
					\draw[kernels,->] (4) to (1);
				\end{tikzpicture}
				\, \approx  \,   \frac{1}{8} \,	\begin{tikzpicture}[baseline=0.6cm,scale=0.35]
					\node at (0,1) [root] (0) {};
					\node at (0,4) [dot] (1) {};
					\draw[kernel,->] (1) to[bend right=60] (0);
					\draw[kernel,->] (1) to[bend left=60] (0);
					\draw[kernel,->] (1) to (0);
				\end{tikzpicture}   + \frac{3}{8} \,	\begin{tikzpicture}[baseline=0.6cm,scale=0.35]
					\node at (0,1) [root] (0) {};
					\node at (0,4) [dot] (1) {};
					\draw[kernel,->] (1) to[bend left=60] (0);
					\draw[kernel,->] (0) to[bend left=60] (1);
					\draw[kernel,->] (1) to (0);
				\end{tikzpicture}.
			\end{equation*}
			For the same reason as before, one can remove the diagram with a loop and get
			\begin{equation*}
				\mathbb{E}( 	(\bm{\Pi}^{\varepsilon} \iiIiXiiii)	(0))  \approx \frac{1}{4} \,	\begin{tikzpicture}[baseline=0.6cm,scale=0.35]
					\node at (0,1) [root] (0) {};
					\node at (0,4) [dot] (1) {};
					\draw[kernel,->] (1) to[bend right=60] (0);
					\draw[kernel,->] (1) to[bend left=60] (0);
					\draw[kernel,->] (1) to (0);
				\end{tikzpicture}. 
			\end{equation*}
			which allows us to conclude.
		\end{proof}

		The BPHZ choice of renormalisation given in \cite{BHZ} corresponds to change $ \bm{\Pi}^{\varepsilon} $ into
		$\hat{\bm{\Pi}}^{\varepsilon}$ such that for every decorated tree $\tau$ with negative degree, one has
		\begin{equation*}
			\mathbb{E}( (\hat{\bm{\Pi}}^{\varepsilon} \tau)(0) ) = 0.
		\end{equation*}

		Cancellations for singular SPDEs were first observed for the KPZ equation in \cite{Hai13} where \eqref{cancellation_1} was computed in the Fourier space. The techniques presented in this physical space proof come from \cite{wong,HQ18}. The main result on the chain rule for space-time white noise was first obtained by computing many cancellations with the same flavour as above (see \cite{Bru}). More general integration by parts identities are provided in \cite{Mate19} and were used to remove non-local counter-terms for the solution theory provided in \cite{MH} for a simple class of quasi-linear equations. One can find similar identities in the context of dispersive PDEs with random initial data (see \cite{BDNY,DH23,BT26}).
		
		\section{Characterisation of the chain rule symmetry }
		
		In this section, we make precise the notion of chain rule symmetry and we characterise this symmetry by providing a basis constructed from iterated covariant derivatives. We focus mainly on multi-indices as they involve only elementary proofs. These proofs were first performed in \cite{CBB}. The case of decorated trees requires more advanced algebra and the use of homological tools. We sketch the main ideas of the proof at the end of the section.  
		Setting $u = \phi^{-1}(v)$ and applying the chain rule, we find that $v$ satisfies
		\[
		\partial_t v =
		\partial_x^{2} v +
		(\phi\cdot f)(v)\,(\partial_x v)^{2} + (\phi\cdot h)(v)
		+ (\phi\cdot g)(v)\,\xi ,
		\]
		where the new coefficients are given by
		\begin{equation*} 
			\begin{aligned}
			&(\phi \cdot g)(\phi(u)) = \phi'(u)\, g(u), \quad (\phi \cdot h)(\phi(u)) = \phi'(u)\, h(u), \\ 
			\label{eq:phi_action_Gamma}
			&(\phi \cdot f)(\phi(u))\,(\phi'(u))^2 = \phi'(u)\, f(u) - \phi''(u).
			\end{aligned}
		\end{equation*}
		The term $h$ corresponds to the counter-terms given by the renormalisation of the stochastic data. Moving this renormalisation from the stochastic data to the equation is a non-trivial task that was  first solved for decorated trees in \cite{BCCH}. A simple and more general proof is given in  \cite{BB21b} based on the preparation maps introduced in \cite{BR18}. The proof in the context of multi-indices is given in \cite{BL23} and follows the ideas of \cite{BB21b}. This is also the case for the paracontrolled ansatz given in \cite{BM26}.
		From these general results, the renormalised equation is given by 
		\begin{equation}\label{eq:renorm_constant3}
			\begin{aligned}
				\partial_t u_{\varepsilon}  &= \partial_x^2 u_{ \varepsilon} + f(u_{\varepsilon})\,(\partial_x u_{\varepsilon})^2 +
				g(u_{\varepsilon})\, \xi_{\varepsilon} + \sum_{z^\beta\in \mathfrak{M}_{ \derivatives, \generic}^-} C_{\varepsilon}(z^\beta)F_{f,g}(z^{\beta})(u_{\varepsilon})
			\end{aligned}
		\end{equation}
		where $\mathfrak{M}_{ \derivatives, \generic}^- \subset \mathfrak{M}_{ \derivatives, \generic}$ consists of elements with negative degrees and the $C_{\varepsilon}(z^\beta)$ are the renormalisation constants associated with the $z^{\beta}$ which arises from the choice of renormalisation performed on the stochastic data. One observes that the counter-terms depend on $ f,g $ via the elementary differentials $ F_{f,g}$.
		By applying the previous transformation rule to elements in the image of $F_{f,g}$, we obtain a rigorous symmetry transformation of our system. Below, we provide the precise definition of the chain rule symmetry
		\begin{definition} \label{def_symmetries}
			We define the subspace $V_{\geo}\subset \mathcal{M}_{ \derivatives, \generic}$ as the subspace generated by those elements $v\in \mathcal{M}_{ \derivatives, \generic}$ such that for all choices of $f$, $g$ and all diffeomorphisms $\phi \colon \mathbb{R} \to \mathbb{R}$ homotopic to the identity, one has 
			\begin{equation*}
				\varphi\cdot F_{f  ,g}(v) = F_{\varphi \cdot f \, \varphi \cdot g}(v)\,.
			\end{equation*}
		\end{definition}
		The previous characterisation is not easy to manipulate. One wants to find a natural basis and compute its dimension. Therefore, a more combinatorial characterisation is required. We first introduce natural derivations and the Lie bracket on multi-indices that will allow us to explicitly state this combinatorial characterisation.
		At the level of multi-indices we define $ [\cdot, \cdot] \colon \mathcal{M}_{\begin{tikzpicture}[scale=0.2,baseline=-2]
				\coordinate (root) at (0,0);
				\node[diff] (rootnode) at (root) {};
			\end{tikzpicture}, \derivatives, \generic} \times \mathcal{M}_{\begin{tikzpicture}[scale=0.2,baseline=-2]
				\coordinate (root) at (0,0);
				\node[diff] (rootnode) at (root) {};
			\end{tikzpicture}, \derivatives, \generic} \to \mathcal{M}_{\begin{tikzpicture}[scale=0.2,baseline=-2]
				\coordinate (root) at (0,0);
				\node[diff] (rootnode) at (root) {};
			\end{tikzpicture}, \derivatives, \generic}  $ as
		\begin{equation*}
			\,	 [v_1, v_2] = v_1 D v_2 - v_2 D v_1
		\end{equation*}
		where $D\colon \mathcal{M}_{\begin{tikzpicture}[scale=0.2,baseline=-2]
				\coordinate (root) at (0,0);
				\node[diff] (rootnode) at (root) {};
			\end{tikzpicture}, \derivatives, \generic} \to \mathcal{M}_{\begin{tikzpicture}[scale=0.2,baseline=-2]
				\coordinate (root) at (0,0);
				\node[diff] (rootnode) at (root) {};
			\end{tikzpicture}, \derivatives, \generic} $ is the derivation on multi-indices given by
		\begin{equation*}
			D   	z_{ (\begin{tikzpicture}[scale=0.2,baseline=-2]
					\coordinate (root) at (0,0);
					\node[diff] (rootnode) at (root) {};
				\end{tikzpicture},k)}   =  	z_{ (\begin{tikzpicture}[scale=0.2,baseline=-2]
					\coordinate (root) at (0,0);
					\node[diff] (rootnode) at (root) {};
				\end{tikzpicture},k+1)}, \quad D  z_{( \derivatives,k)}    =  z_{( \derivatives,k+1)}, \quad D	z_{ (\generic,k)}  = 	z_{ (\generic,k+1)}
		\end{equation*}
		and then extended to $\mathcal{M}_{\begin{tikzpicture}[scale=0.2,baseline=-2]
				\coordinate (root) at (0,0);
				\node[diff] (rootnode) at (root) {};
			\end{tikzpicture}, \derivatives, \generic} $ via the Leibniz rule. Below, we define a geometric linear map denoted by $\phi_\geo$. It is given in terms of the Lie bracket $[\cdot,\cdot]$ and the derivation $D$.

		\begin{definition} \label{def:injectivity_Upsilon}
			We first define the geometric map $\phi_\geo : \mathcal{M}_{ \derivatives, \generic} \rightarrow \mathcal{M}_{\begin{tikzpicture}[scale=0.2,baseline=-2]
					\coordinate (root) at (0,0);
					\node[diff] (rootnode) at (root) {};
				\end{tikzpicture}, \derivatives, \generic}$  by setting 
			\begin{equation*}
				\begin{aligned}
					\phi_\geo(	z_{ (\generic,k)})  &= D^{k}  [	z_{ (\generic,0 )}, z_{ (\begin{tikzpicture}[scale=0.2,baseline=-2]
							\coordinate (root) at (0,0);
							\node[diff] (rootnode) at (root) {};
						\end{tikzpicture},0)}] \\
					\phi_\geo( z_{( \derivatives,k)} ) & = - D^{k+1} \left( 	z_{( \derivatives,0)} z_{ (\begin{tikzpicture}[scale=0.2,baseline=-2]
							\coordinate (root) at (0,0);
							\node[diff] (rootnode) at (root) {};
						\end{tikzpicture},0)} 
					\right) - 2  z_{ (\begin{tikzpicture}[scale=0.2,baseline=-2]
							\coordinate (root) at (0,0);
							\node[diff] (rootnode) at (root) {};
						\end{tikzpicture},k+2) }
				\end{aligned}
			\end{equation*}
			and then it is extended via the Leibniz rule.
			We define the geometric map $ \hat{\phi}_\geo \colon  \mathcal{M}_{ \derivatives, \generic} \rightarrow \mathcal{M}_{\begin{tikzpicture}[scale=0.2,baseline=-2]
					\coordinate (root) at (0,0);
					\node[diff] (rootnode) at (root) {};
				\end{tikzpicture}, \derivatives, \generic} $ by simply putting 
			\begin{equation*}
				\hat{\phi}_\geo(z^{\beta}) = \phi_\geo(z^{\beta}) - [ z^{\beta}, z_{ (\begin{tikzpicture}[scale=0.2,baseline=-2]
						\coordinate (root) at (0,0);
						\node[diff] (rootnode) at (root) {};
					\end{tikzpicture},0)} ]\,.
			\end{equation*}
		\end{definition}
		From the definition of $\phi_\geo$ is easy to see that one has
		\begin{equation}
			\label{commutation_derivative}
			\phi_\geo(D z^{\beta}) = D \phi_\geo( z^{\beta}).
		\end{equation}
		In the next Theorem, one gets a characterisation  of $ V_{\geo} $ as the kernel of $ \hat{\varphi}_{\geo} $. 
		
		\begin{theorem} \label{geo_chain_rule} One has $v \in	V_{\geo} $ if and only if $ \hat{\varphi}_{\geo}(v) = 0$.
		\end{theorem}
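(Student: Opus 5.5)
The strategy is to linearise the group action and then use the injectivity of elementary differentials (Theorem~\ref{injectivity_Upsilon}). Since the diffeomorphisms $\phi$ are homotopic to the identity, the symmetry condition $\phi\cdot F_{f,g}(v)=F_{\phi\cdot f,\phi\cdot g}(v)$ for all such $\phi$ is equivalent to its infinitesimal version along flows $\phi_s=\exp(s h\partial_u)$ generated by an arbitrary smooth vector field $h$. Here we use that diffeomorphisms homotopic to the identity are generated by such flows, and that both sides define group actions. Differentiating at $s=0$ gives the following infinitesimal transformation rules:
\begin{equation*}
\delta_h g = h g' - h' g,\qquad \delta_h f = h f' + h' f + h''.
\end{equation*}
The second rule comes from differentiating $(\phi\cdot f)(\phi(u))\phi'(u)^2=\phi'(u)f(u)-\phi''(u)$ with $\phi=\mathrm{id}+s h$. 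The sign of the $h''$ term is to be double-checked against the $-2z_{(\diffh,k+2)}$ in $\phi_\geo$; up to a convention on $h$ it matches. Similarly, the action on an output coefficient $F$ (a vector-field-like term, like $g$) is $\delta_h F = hF'-h'F$.

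The first key step is a dictionary between $D$, $[\cdot,\cdot]$ and derivations of elementary differentials. By definition of $F$ on monomials, $F(Dv)=\frac{d}{du}F(v)$, since $D$ shifts every index by one and obeys Leibniz. With $F(z_{(\diffh,0)})=h$, this gives $F([v,z_{(\diffh,0)}]) = F(v)h' - hF(v)'$, which is $-\delta_h$ acting on the output of $F(v)$. Next, $F(\phi_\geo(z_{(\generic,k)}))=\frac{d^k}{du^k}(gh'-hg')=-\partial^k\delta_h g$. For the thick variable, $F(\phi_\geo(z_{(\derivatives,k)}))=-\partial^{k+1}(fh)-2h^{(k+2)}$. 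I would verify that this equals $-\partial^k(\delta_h f)$ with the paper's normalisation of $F(z_{(\derivatives,k)})$; the factor $2$ in the proof of Theorem~\ref{injectivity_Upsilon} suggests $F(z_{(\derivatives,k)})$ is effectively $2f^{(k)}$, which reconciles the coefficients. Since $\phi_\geo$ is extended by Leibniz, and $v\mapsto \frac{d}{ds}F_{\phi_s\cdot f,\phi_s\cdot g}(v)$ is also a derivation on monomials (product rule over the factors $f^{(k)},g^{(k)}$), we obtain for every $v$
\begin{equation*}
F^h_{f,g}(\phi_\geo(v)) = -\tfrac{d}{ds}\big|_{s=0} F_{\phi_s\cdot f,\phi_s\cdot g}(v),\qquad F^h_{f,g}([v,z_{(\diffh,0)}]) = -\tfrac{d}{ds}\big|_{s=0}\, \phi_s\cdot F_{f,g}(v).
\end{equation*}
Hence $F^h_{f,g}(\hat\phi_\geo(v))$ is, up to sign, the infinitesimal defect of the symmetry.

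For the direction ($\Rightarrow$), take $v\in V_\geo$. Differentiating along $\phi_s$ gives $F^h_{f,g}(\hat\phi_\geo(v))=0$ for all $f,g,h$. Theorem~\ref{injectivity_Upsilon} then yields $\hat\phi_\geo(v)=0$. For the direction ($\Leftarrow$), suppose $\hat\phi_\geo(v)=0$. Then the infinitesimal defect vanishes for every $(f,g)$ and every $h$. Set $\Delta(s)=\phi_s\cdot F_{f,g}(v)-F_{\phi_s\cdot f,\phi_s\cdot g}(v)$. Using the group property $\phi_{s+t}=\phi_t\circ\phi_s$ and applying the infinitesimal identity at the transformed coefficients $(\phi_s\cdot f,\phi_s\cdot g)$, we get $\Delta'(s)=\phi_s\cdot(\text{infinitesimal defect at }(f,g))$-type terms, and these vanish. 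So $\Delta\equiv0$, and the statement extends to all $\phi$ homotopic to the identity by composing flows along an isotopy. This is a time-dependent vector field argument, handled the same way.

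The main obstacle is the bookkeeping in the second step. One has to check signs and constants, namely the $-2z_{(\diffh,k+2)}$ term versus $\phi''$ and the factor $2$ in $F(z_{(\derivatives,k)})$. One must also confirm that the Leibniz extension of $\phi_\geo$ really reproduces the derivative of $F$ along the action, including the fact that only the $D^k$-commutation \eqref{commutation_derivative} is needed to pass from $k=0$ to general $k$. The integration from the infinitesimal to the global statement is routine once the action is known to be a group action.
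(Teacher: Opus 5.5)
Your ($\Rightarrow$) argument is the paper's proof. You differentiate the symmetry along a family $\psi_t$ with $\psi_0=\mathrm{id}$ and $\partial_t\psi_t|_{t=0}=h$. You then match both sides, generator by generator and then by the Leibniz rule, with $F^h_{f,g}$ of the bracket term and of $\phi_\geo(v)$, and conclude by Theorem~\ref{injectivity_Upsilon}. The paper calls this the ``if'' part, but it is really the ``only if'' direction; the converse is not proved in the text and is referred to \cite[Proposition 6.2]{BGHZ}.

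Your integration argument for ($\Leftarrow$) is therefore a genuine, self-contained addition. It needs no injectivity, only two things: the infinitesimal identity at every pair of coefficients and every vector field, and the fact that both transformation rules are group actions. The second holds, since the rule for $f$ is the one-dimensional transformation law of a Christoffel symbol.

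One step must be repaired: the derivative of $\Delta$. Write $f_s=\phi_s\cdot f$, $g_s=\phi_s\cdot g$ and $\delta_hG=h'G-hG'$. Then one actually gets
\[
\Delta'(s)=-F^h_{f_s,g_s}\big(\hat\phi_\geo(v)\big)+\delta_h\Delta(s),
\]
and the second term does not vanish on its own. There are two fixes. Either note that $\Delta$ then solves a linear transport equation with $\Delta(0)=0$. Or differentiate $E(s)=\phi_s^{-1}\cdot F_{f_s,g_s}(v)$ instead, whose derivative is $\phi_s^{-1}\cdot F^h_{f_s,g_s}(\hat\phi_\geo(v))=0$.

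The same computation, using the time-dependent field $X_t=\dot\phi_t\circ\phi_t^{-1}$ of the isotopy $(1-t)\mathrm{id}+t\phi$, covers every orientation-preserving $\phi$. That is what ``homotopic to the identity'' has to mean on $\mathbb{R}$. This works because the infinitesimal identity only sees the first-order variation.

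On the bookkeeping you flagged, take $\phi_s=\mathrm{id}+sh+O(s^2)$. The correct variations are $\delta_hg=h'g-hg'$, $\delta_hf=-(hf)'-h''$, and $\delta_hG=h'G-hG'$ on outputs. These are the negatives of yours. Since you flip all three, nothing breaks, and with the correct signs both identities hold with a plus sign, as in the paper.

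The normalisation $F(z_{(\derivatives,k)})=2f^{(k)}$ is the right one. It is what $D^2_{\partial_xu}$ produces, and it is what the injectivity proof and the covariant-derivative identity use; the earlier display with $f^{(k)}$ is a misprint. With it, $F(\phi_\geo(z_{(\derivatives,k)}))=-2(hf)^{(k+1)}-2h^{(k+2)}=\partial^k\delta_h(2f)$.

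Finally, in ($\Rightarrow$) take $h$ compactly supported, so that $\mathrm{id}+sh$ is a genuine diffeomorphism for small $s$. This loses nothing: the identity is pointwise, and the injectivity proof only uses jets at a single point.
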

		\begin{proof}
			We only prove the if part. The proof for the only if is given in \cite[Proposition 6.2]{BGHZ}. We consider a family of maps $ (\psi_{t})_{t \geq 0}  $ with $ \psi_0 = \id, \partial_t \psi |_{t=0} = h $. Let $ v \in V_{\geo} $, one has from Definition \ref{def_symmetries}
			\begin{equation*}
				\psi_t \cdot F_{ f, g}(v) = F_{\psi_t \cdot f, \psi_t \cdot g}(v).
			\end{equation*}
			Then, one takes the derivative at time $t=0$ on both sides of the previous equality:
			\begin{equation*}
				\partial_t	\left( \psi_t \cdot F_{ f, g}(v) \right)|_{t=0} =	\partial_t	\left( F_{\psi_t \cdot f, \psi_t \cdot g}(v)\right)|_{t=0}.
			\end{equation*}
			In what follows, we compute both sides of the identity separately. Many simplifications occur, as evaluated at $t=0$, most terms $ \psi_t $ reduce to the identity map and $ \psi'_t $ reduces to $h$.
			One first has
			\begin{equation*}
				\begin{aligned}
					\partial_t	\left(	\psi_t \cdot F^h_{ f, g}(v) \right) |_{t=0} & = \partial_t	\left(	(\psi_t' \, F^h_{ f, g}(v)) \circ \psi_t^{-1} \right)|_{t=0} \\
					& = 	h'  F_{ f, g}(v) - \partial_u F_{ f, g}(v) h 
					\\ & =F^h_{f,g}\left( [v, z_{ (\begin{tikzpicture}[scale=0.2,baseline=-2]
							\coordinate (root) at (0,0);
							\node[diff] (rootnode) at (root) {};
						\end{tikzpicture},0)} ] \right).
				\end{aligned}
			\end{equation*}
			Then, on the other hand, one has
			\begin{equation*}
				\begin{aligned}
					\partial_t F_{\psi_t \cdot f, \psi_t \cdot g}(  z_{ (\generic,k)} )|_{t=0} &= 	\partial_t \left(  (\psi_t' \, g) \; \circ \psi_t^{(-1)} \right)^{(k)}  |_{t=0} 
					\\ &= \left(\partial_t \left(  (\psi_t' \, g) \; \circ \psi_t^{(-1)} \right) |_{t=0} \right)^{(k)}
					\\ & =  \left( h' g - h g'  \right)^{(k)}  
					\\ &= F^{h}_{f,g} \left(  [	z_{ (\generic,0 )}, z_{ (\begin{tikzpicture}[scale=0.2,baseline=-2]
							\coordinate (root) at (0,0);
							\node[diff] (rootnode) at (root) {};
						\end{tikzpicture},0)}] \right)^{(k)}
					\\ & = F^{h}_{f,g} \left(D^{k}  [	z_{ (\generic,0 )}, z_{ (\begin{tikzpicture}[scale=0.2,baseline=-2]
							\coordinate (root) at (0,0);
							\node[diff] (rootnode) at (root) {};
						\end{tikzpicture},0)}] \right)
					\\ & = F^{h}_{f,g} [	\phi_\geo(	z_{ (\generic,k)}) ].
				\end{aligned}
			\end{equation*}
			One has also
			\begin{equation*}
				\begin{aligned}
					\partial_t F_{\psi_t \cdot f, \psi_t \cdot g}(   z_{(\derivatives,k)} )|_{t=0} & =  2 \	\partial_t \left( 
					\frac{	\psi_t' \, f - \psi_t''}{(\psi_t')^2} \circ \psi_t^{-1} \right)^{(k)}|_{t=0}
					\\ & = 2 \left( \partial_t \left( 
					\frac{	\psi_t' \, f - \psi_t''}{(\psi_t')^2} \circ \psi_t^{-1} \right)|_{t=0} \right)^{(k)}
					\\ &= 2 \left(  h' f - h f' -  h'' - 2 h'f \right)^{(k)}
					\\ & =  F^{h}_{f,g}  \left(  - 	z_{( \derivatives,0)} z_{ (\begin{tikzpicture}[scale=0.2,baseline=-2]
							\coordinate (root) at (0,0);
							\node[diff] (rootnode) at (root) {};
						\end{tikzpicture},1)} - z_{( \derivatives,1)}
					z_{ (\begin{tikzpicture}[scale=0.2,baseline=-2]
							\coordinate (root) at (0,0);
							\node[diff] (rootnode) at (root) {};
						\end{tikzpicture},0)}  - 2  z_{ (\begin{tikzpicture}[scale=0.2,baseline=-2]
							\coordinate (root) at (0,0);
							\node[diff] (rootnode) at (root) {};
						\end{tikzpicture},2)}
					\right)^{(k)}
					\\ & =   F^{h}_{f,g}  \left(  - D^{k+1}\left(	z_{( \derivatives,0)} z_{ (\begin{tikzpicture}[scale=0.2,baseline=-2]
							\coordinate (root) at (0,0);
							\node[diff] (rootnode) at (root) {};
						\end{tikzpicture},0)}  \right)  - 2  z_{ (\begin{tikzpicture}[scale=0.2,baseline=-2]
							\coordinate (root) at (0,0);
							\node[diff] (rootnode) at (root) {};
						\end{tikzpicture},k+2)}
					\right)
					\\ & = F^{h}_{f,g}  \left(  \phi_\geo( z_{( \derivatives,k)} ) \right).
				\end{aligned}
			\end{equation*}
			In the end, gathering the various terms, one gets for all  $f,g,h \in \mathcal{C}^{\infty}(\mathbb{R}, \mathbb{R})$
			\begin{equation*}
				F^{h}_{f,g} \left( \phi_\geo(v) \right)
				=  F^{h}_{f,g} \left([ v, z_{ (\begin{tikzpicture}[scale=0.2,baseline=-2]
						\coordinate (root) at (0,0);
						\node[diff] (rootnode) at (root) {};
					\end{tikzpicture},0)} ] \right).
			\end{equation*}
			Hence we conclude from the injectivity of the map $ F_{f,g}^{h} $ in Theorem~\ref{injectivity_Upsilon}.
		\end{proof}
		One notices that the injectivity of the elementary differentials is crucial for obtaining the combinatorial characterisation. When one considers the geometric KPZ equation for a sufficiently large system, decorated trees provide the correct combinatorial set to achieve the injectivity. From the negative results of \cite{BL25}, one does not expect to see any intermediate combinatorial set for small dimensions greater than one.

		After this characterisation, we provide a candidate for a generating set of $ V_{\geo} $. We start by recalling one natural geometric operation, namely the covariant derivatives given for a Christoffel symbol $ \Gamma_{\beta, \gamma}^{\alpha} $ and smooth vector fields $ \sigma_{i}^{\alpha} $  by
		\begin{equation*}
			\nabla_{\sigma_i} \sigma_j = \sigma_i^{\beta} \partial_{\beta} \sigma_j + \Gamma_{\beta \gamma} \sigma_i^\beta \sigma_j^\gamma.
		\end{equation*}
		In the one-dimensional case, one gets
		\begin{equation*}
			\nabla_{g} g = g g' + f g^2.
		\end{equation*}
		Inspired by this definition, we define combinatorial covariant derivatives on multi-indices given below.
		For any  $v_1, v_2\in\mathcal {M}_{ \derivatives, \generic}$	we define the covariant derivative $\nabla_{v_1} v_2\in \mathcal {M}_{ \derivatives, \generic}$  as  follows
		\begin{equation} \label{covariant_derivative}
			\nabla_{v_1} v_2 = v_1 D v_2 + \frac{1}{2}  z_{( \derivatives,0)} v_1 v_2.
		\end{equation}
		One can easily check that $\nabla$ is a well-defined map $\nabla\colon \mathcal {M}_{ \derivatives, \generic} \times \mathcal {M}_{ \derivatives, \generic} \to \mathcal {M}_{ \derivatives, \generic} $. 
		One has the following identity
		\begin{proposition}
			For any $v_1, v_2\in\mathcal {M}_{ \derivatives, \generic}$, one has 
			\begin{equation*}
				F_{f,g}( \nabla_{v_1} v_2 ) 
				= \nabla_{ 	F_{f,g}(v_1)} 	F_{f,g}(v_2).
			\end{equation*}
		\end{proposition}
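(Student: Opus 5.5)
Since both sides of the claimed identity are bilinear in $(v_1,v_2)$ and $F_{f,g}$ is linear, it suffices to treat monomials $v_1=z^{\beta_1}$, $v_2=z^{\beta_2}$. Here the right-hand side means the one-dimensional covariant derivative $\nabla_{a} b = a\, b' + \Gamma\, a b$, where $\Gamma$ is the one-dimensional Christoffel symbol. To match the coefficients of the combinatorial definition \eqref{covariant_derivative}, $\Gamma$ must be read as $F_{f,g}(z_{(\derivatives,0)})/2$. Indeed, the elementary differential attached to $z_{(\derivatives,0)}$ carries the factor $2f$ coming from $D_{\partial_x u}^2 f(u)(\partial_x u)^2$ (as in the proof of Theorem~\ref{injectivity_Upsilon}, where $2s_k$ appears), and then $\tfrac12 F(z_{(\derivatives,0)}) = f$, consistent with $\nabla_g g = gg'+fg^2$. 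The first step is therefore to fix this normalisation explicitly. I will check it on $v_1=v_2=z_{(\generic,0)}$, where the claim reduces to $F(\nabla_{z_{(\generic,0)}}z_{(\generic,0)}) = g g' + f g^2$.

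The key lemma is that $F_{f,g}$ intertwines the combinatorial derivation $D$ with the ordinary derivative:
\begin{equation*}
F_{f,g}(D v) = \partial_u F_{f,g}(v) \quad \text{for all } v .
\end{equation*}
On generators this is immediate, because $F(z_{(\generic,k)}) = g^{(k)}$ and $D z_{(\generic,k)} = z_{(\generic,k+1)}$, with the same statement for the variables $z_{(\derivatives,k)}$ up to the constant factor fixed above. Both $D$ and $\partial_u$ are derivations and $F_{f,g}$ is multiplicative on monomials, so the identity extends to all monomials by the Leibniz rule and an induction on the total degree $\sum\beta$, and then to all of $\mathcal{M}_{\derivatives,\generic}$ by linearity. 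This is the same mechanism already used in the proof of Theorem~\ref{geo_chain_rule} when moving $(\cdot)^{(k)}$ inside $F^h_{f,g}$.

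With this lemma the statement follows by a direct computation using multiplicativity:
\begin{equation*}
F_{f,g}(\nabla_{v_1}v_2) = F(v_1)\,\partial_u F(v_2) + \tfrac12 F(z_{(\derivatives,0)})\,F(v_1)F(v_2) = \nabla_{F(v_1)}F(v_2).
\end{equation*}
There are two things to confirm. First, $\nabla_{v_1}v_2$ stays in $\mathcal M_{\derivatives,\generic}$. Since $[\cdot]$ is additive on products up to the $-\sum\beta$ offset, $D$ raises $[\beta]$ by one, and multiplication by $z_{(\derivatives,0)}$ also raises it by one, both terms have population $1$ when $v_1,v_2$ do. Second, multiplicativity of $F$ holds for products of monomials, not only populated ones, because $F$ is defined generator-wise.

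The only genuine obstacle I expect is the normalisation of $F(z_{(\derivatives,k)})$. The paper writes $f^{(k)}$ in one place and uses $2s_k$ in another, so the factor $\tfrac12$ in \eqref{covariant_derivative} must be reconciled with the convention $\Gamma=f$. I would resolve this by adopting $F(z_{(\derivatives,k)}) = 2 f^{(k)}$, which is consistent with the coherence formula for the elementary differentials and with Table~\ref{tab:kpz_relevant}. I would then verify the generator case of the intertwining lemma under that convention. Once this is settled, everything else is routine bookkeeping.
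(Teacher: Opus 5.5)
Your proposal is correct and follows the paper's proof: it uses multiplicativity of $F_{f,g}$, the intertwining $F_{f,g}(Dv)=\partial_u F_{f,g}(v)$ (which you justify on generators plus the Leibniz rule, whereas the paper simply invokes it), and $\tfrac12 F_{f,g}(z_{(\derivatives,0)})=f$, which is exactly your normalisation $F(z_{(\derivatives,k)})=2f^{(k)}$. Your side check on populations is not needed for the identity, and its bookkeeping is off as written: the paper's $[\cdot]$ is exactly linear in $\beta$ with no offset, so for $v_i=z^{\beta_i}$ both terms of $\nabla_{v_1}v_2$ have bracket $[\beta_1]+[\beta_2]+1$, and this is preserved only under the sign-corrected condition $[\beta]=-1$ (nodes minus edges equal to $1$, as for $z_{(\generic,0)}$), not under $[\beta]=1$.
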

		\begin{proof}
			One has by definition of the covariant combinatorial derivative \eqref{covariant_derivative}
			\begin{equation*}
				\begin{aligned}
					F_{f,g}( \nabla_{v_1} v_2 )  & =F_{f,g}( v_1 D v_2 )  + \frac{1}{2}  F_{f,g}(z_{( \derivatives,0)} v_1 v_2)
					\\ & = F_{f,g}( v_1)  F_{f,g}( D v_2 )  + \frac{1}{2}  F_{f,g}(z_{( \derivatives,0)})   F_{f,g}(v_1)  F_{f,g}( v_2)
					\\ & = F_{f,g}( v_1)  F_{f,g}'(  v_2 )  +   f   F_{f,g}(v_1)  F_{f,g}( v_2)
					\\ & =  \nabla_{ 	F_{f,g}(v_1)} 	F_{f,g}(v_2)
				\end{aligned}
			\end{equation*}
			where we have used the multiplicativity of $ F_{f,g} $ on the second line. On the third line, we have used the following two identities
			\begin{equation*}
				F_{f,g}( D v_2 ) = F_{f,g}'( v_2 ), \quad	\frac{1}{2}  F_{f,g}(z_{( \derivatives,0)})   = f.
			\end{equation*}
		\end{proof}
		
		In the following proposition, we show
		that the structure of $\hat{\phi}_\geo$ is preserved under covariant derivatives.  We provide a proof that uses only the combinatorial characterisation of $ V_{\geo} $.
		\begin{proposition} \label{geometric_Nabla} For every 	$v_1, v_2 \in	V_{\geo}$, one has
			$\nabla_{v_1} v_2  \in V_{\geo}$.
		\end{proposition}
		\begin{proof}
			One has to check that $ 	\nabla_{v_1} v_2 $ belongs to the kernel of $ \hat{\varphi}_{\geo} $. One first has
			\begin{equation*}
				\begin{aligned}
					\phi_\geo \left( \nabla_{v_1} v_2  \right)
					&  = \phi_\geo \left(  v_1 D v_2 + \frac{1}{2}  z_{( \derivatives,0)} v_1 v_2  \right)
					\\
					& = \phi_\geo \left(  v_1 \right) D v_2 + 
					v_1 D   \phi_\geo \left( v_2 \right) +  \frac{1}{2}   z_{( \derivatives,0)}  \phi_\geo  \left( v_1 \right) v_2 \\ &  + \frac{1}{2}   z_{( \derivatives,0)}   v_1  \phi_\geo  \left( v_2 \right)  \frac{1}{2} \phi_\geo  \left(  z_{( \derivatives,0)} \right) v_1 v_2
					\\
					& = \nabla_{ \phi_\geo \left(v_1\right)} v_2  +  \nabla_{v_1} \phi_\geo \left(v_2 \right) + \frac{1}{2} \phi_\geo  \left(  z_{( \derivatives,0)} \right) v_1 v_2
					\\ & = \nabla_{ [  v_1, z_{ (\begin{tikzpicture}[scale=0.2,baseline=-2]
								\coordinate (root) at (0,0);
								\node[diff] (rootnode) at (root) {};
							\end{tikzpicture},0)} ]} v_2  +  \nabla_{v_1} [  v_2, z_{ (\begin{tikzpicture}[scale=0.2,baseline=-2]
							\coordinate (root) at (0,0);
							\node[diff] (rootnode) at (root) {};
						\end{tikzpicture},0)} ]  + \frac{1}{2} \phi_\geo  \left(  z_{( \derivatives,0)} \right) v_1 v_2
				\end{aligned}
			\end{equation*}
			where we have used the fact that $ \phi_\geo $ is a derivation and \eqref{commutation_derivative} in the second line and the fact that  for $i \in \lbrace 1,2 \rbrace$ $\phi_\geo \left(v_i\right) =  [  v_i, z_{ (\begin{tikzpicture}[scale=0.2,baseline=-2]
					\coordinate (root) at (0,0);
					\node[diff] (rootnode) at (root) {};
				\end{tikzpicture},0)} ]$ in the last line.
			Then, continuing the computation we obtain
			\begin{equation*}
				\begin{aligned}
					\nabla_{ [  v_1, z_{ (\begin{tikzpicture}[scale=0.2,baseline=-2]
								\coordinate (root) at (0,0);
								\node[diff] (rootnode) at (root) {};
							\end{tikzpicture},0)} ]} v_2 & =  v_1 D z_{ (\begin{tikzpicture}[scale=0.2,baseline=-2]
							\coordinate (root) at (0,0);
							\node[diff] (rootnode) at (root) {};
						\end{tikzpicture},0)} D v_2 -  D v_1  z_{ (\begin{tikzpicture}[scale=0.2,baseline=-2]
							\coordinate (root) at (0,0);
							\node[diff] (rootnode) at (root) {};
						\end{tikzpicture},0)} D v_2 + \frac{1}{2}  z_{( \derivatives,0)} v_1 D z_{ (\begin{tikzpicture}[scale=0.2,baseline=-2]
							\coordinate (root) at (0,0);
							\node[diff] (rootnode) at (root) {};
						\end{tikzpicture},0)}  v_2
					\\ &  - \frac{1}{2}  z_{( \derivatives,0)} D v_1  z_{ (\begin{tikzpicture}[scale=0.2,baseline=-2]
							\coordinate (root) at (0,0);
							\node[diff] (rootnode) at (root) {};
						\end{tikzpicture},0)}  v_2
					\\   \nabla_{v_1} [  v_2, z_{ (\begin{tikzpicture}[scale=0.2,baseline=-2]
							\coordinate (root) at (0,0);
							\node[diff] (rootnode) at (root) {};
						\end{tikzpicture},0)} ] & = z_{ (\begin{tikzpicture}[scale=0.2,baseline=-2]
							\coordinate (root) at (0,0);
							\node[diff] (rootnode) at (root) {};
						\end{tikzpicture},2)}  v_1 v_2 + z_{ (\begin{tikzpicture}[scale=0.2,baseline=-2]
							\coordinate (root) at (0,0);
							\node[diff] (rootnode) at (root) {};
						\end{tikzpicture},1)}  v_1 D v_2 - z_{ (\begin{tikzpicture}[scale=0.2,baseline=-2]
							\coordinate (root) at (0,0);
							\node[diff] (rootnode) at (root) {};
						\end{tikzpicture},1)}  v_1 D v_2 - z_{ (\begin{tikzpicture}[scale=0.2,baseline=-2]
							\coordinate (root) at (0,0);
							\node[diff] (rootnode) at (root) {};
						\end{tikzpicture},0)}  Dv_1 Dv_2
					\\ & + \frac{1}{2}  z_{( \derivatives,0)} v_1 D z_{ (\begin{tikzpicture}[scale=0.2,baseline=-2]
							\coordinate (root) at (0,0);
							\node[diff] (rootnode) at (root) {};
						\end{tikzpicture},0)}  v_2 - \frac{1}{2}  z_{( \derivatives,0)} v_1  z_{ (\begin{tikzpicture}[scale=0.2,baseline=-2]
							\coordinate (root) at (0,0);
							\node[diff] (rootnode) at (root) {};
						\end{tikzpicture},0)}  D v_2
					\\  \frac{1}{2} \phi_\geo  \left(  z_{( \derivatives,0)} \right) v_1 v_2 & =   - \frac{1}{2} D \left( z_{( \derivatives,0)}  z_{ (\begin{tikzpicture}[scale=0.2,baseline=-2]
							\coordinate (root) at (0,0);
							\node[diff] (rootnode) at (root) {};
						\end{tikzpicture},0)} \right) v_1 v_2
					-    z_{ (\begin{tikzpicture}[scale=0.2,baseline=-2]
							\coordinate (root) at (0,0);
							\node[diff] (rootnode) at (root) {};
						\end{tikzpicture},2)}  v_1 v_2.
				\end{aligned}
			\end{equation*}
			On the other hand, one has
			\begin{equation*}
				\begin{aligned}
					\	[  \nabla_{v_1} v_2, z_{ (\begin{tikzpicture}[scale=0.2,baseline=-2]
							\coordinate (root) at (0,0);
							\node[diff] (rootnode) at (root) {};
						\end{tikzpicture},0)} ]
					& = v_1 D v_2 D z_{ (\begin{tikzpicture}[scale=0.2,baseline=-2]
							\coordinate (root) at (0,0);
							\node[diff] (rootnode) at (root) {};
						\end{tikzpicture},0)} + \frac{1}{2}  z_{( \derivatives,0)} v_1 v_2 D z_{ (\begin{tikzpicture}[scale=0.2,baseline=-2]
							\coordinate (root) at (0,0);
							\node[diff] (rootnode) at (root) {};
						\end{tikzpicture},0)}
					\\ & - D(v_1 D v_2)  z_{ (\begin{tikzpicture}[scale=0.2,baseline=-2]
							\coordinate (root) at (0,0);
							\node[diff] (rootnode) at (root) {};
						\end{tikzpicture},0)} + \frac{1}{2} D \left(  z_{( \derivatives,0)} v_1 v_2 \right)  z_{ (\begin{tikzpicture}[scale=0.2,baseline=-2]
							\coordinate (root) at (0,0);
							\node[diff] (rootnode) at (root) {};
						\end{tikzpicture},0)}.
				\end{aligned}
			\end{equation*}
			Combining the two sides we conclude  that
			$\phi_\geo \left( \nabla_{v_1} v_2  \right) = 	[  \nabla_{v_1} v_2, z_{ (\begin{tikzpicture}[scale=0.2,baseline=-2]
					\coordinate (root) at (0,0);
					\node[diff] (rootnode) at (root) {};
				\end{tikzpicture},0)} ]$.
		\end{proof}
		
		For any integer $N \ge 1$, we define $\mathfrak{B}_N$ to be the set obtained by iterating the covariant derivative $N-1$ times on $z_{(\generic,0)}$. We define this set recursively by $\mathfrak{B}_1=\{ z_{(\generic,0)}\}$ and for any $N\geq 1$ by
		\begin{equation}\label{eq:def_Bn}
			\mathfrak{B}_{N+1}= \{\nabla_{v} \, w\colon  v\in \mathfrak{B}_{k}, w\in \mathfrak{B}_{N+1-k}, \, 1\leq k\leq N\}\,.
		\end{equation}
		For instance, one has
		\begin{equation*}
			\begin{aligned}
				\mathfrak{B}_{2} &= \left \lbrace	 		\nabla_{\generic}  \generic\right \rbrace\,,\quad \mathfrak{B}_{3} = \left \lbrace	 	\nabla_{\generic} 	\nabla_{\generic}  \generic, \, 	\nabla_{\nabla_{\generic}  \generic }   \generic \right \rbrace\,,\\
				\mathfrak{B}_{4} &= \left \lbrace	\nabla_{\generic} 	\nabla_{\generic} 	\nabla_{\generic}  \generic, \, 	\nabla_{\generic} 	\nabla_{\nabla_{\generic}  \generic }   \generic, \,  	\nabla_{\nabla_{\generic}  \generic } 	\nabla_{\generic} 	  \generic , \,		\nabla_{\nabla_{\generic} \nabla_{\generic}  \generic }   \generic, \,	\nabla_{\nabla_{\nabla_{\generic} \generic}   \generic }   \generic \right \rbrace\,,
			\end{aligned}
		\end{equation*}
		where we have made the following abuse of notations replacing $ 	z_{ (\generic,0)} $ by $ \generic $. We denote by $V_{\geo}^N$ the subspace of $ V_{\geo} $ containing $N$ noises meaning $N$ variables of type $ z_{(\generic,k)} $.  By construction of $\mathfrak{B}_N$ and using Proposition \ref{geometric_Nabla}, all of its elements belong to $V_{\geo}^N$. One can  immediately see that the elements of $\mathfrak{B}_{N}$ do not constitute a linearly independent family in general. One has the following identities for elements of $\mathfrak{B}_4$
		\begin{equation*}\label{eq:extra_relations}
			\nabla_{\nabla_{\generic}  \generic } 	\nabla_{\generic} 	  \generic =	\nabla_{\nabla_{\generic} \nabla_{\generic}  \generic }   \generic\,, \, \quad
			2 \,	\nabla_{\nabla_{\generic} \nabla_{\generic}  \generic }   \generic - 	\nabla_{\generic} 	\nabla_{\nabla_{\generic}  \generic }   \generic   = 	\nabla_{\nabla_{\nabla_{\generic} \generic}   \generic }    \generic\,. 
		\end{equation*}
		In the sequel, we will use the following sets
		\begin{equation*}
			\begin{aligned}
				\mathfrak{M}_{\generic} & = \lbrace z^{\beta} = \prod_{k \geq 0}\left( z_{(\generic,k)}\right)^{\beta(\generic,k)} : 
				[\beta] =1 ,\; |\beta|\geq 2\rbrace, \quad \mathcal{M}_{\generic} = 	\langle \mathfrak{M}_{\generic }\rangle,  \\ \mathfrak{M}_{\generic}^N & = \lbrace z^{\beta}  \in \mathfrak{M}_{\generic}  : |\beta| =  N 
				\rbrace, \quad \mathcal{M}_{\generic}^N = 	\langle \mathfrak{M}_{\generic }^N\rangle.
			\end{aligned}
		\end{equation*}
		where $ |\beta| $ is the number of variables of type $ z_{(\generic,k)} $ that appear within $z^{\beta}$.
		
		\begin{proposition} \label{lower_bound_dimension}
			For any $N\geq 2$ the linear vector space generated by $\mathfrak{B}_N$ is a subspace of 	$V_{\geo}^N$ with dimension larger than $ \mathrm{Card}(\mathfrak{M}_{\generic}^N)$.
		\end{proposition}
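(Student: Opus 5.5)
The plan is to find a linear map that sends $\langle \mathfrak{B}_N\rangle$ onto all of $\mathcal{M}_{\generic}^N$. Since $\mathfrak{M}_{\generic}^N$ is a basis of $\mathcal{M}_{\generic}^N$, this gives $\dim \langle \mathfrak{B}_N\rangle \geq \mathrm{Card}(\mathfrak{M}_{\generic}^N)$. The inclusion $\langle\mathfrak{B}_N\rangle\subset V_{\geo}^N$ is already known: it follows from Proposition~\ref{geometric_Nabla} by induction on $N$, starting from $z_{(\generic,0)}\in V_{\geo}$, which holds because $\hat\phi_\geo(z_{(\generic,0)})=0$ by definition.

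\textbf{Step 1: the projection.} Let $\pi\colon \mathcal{M}_{\derivatives,\generic}\to \mathcal{M}_{\generic}$ be the algebra morphism that sends every $z_{(\derivatives,k)}$ to $0$ and fixes every $z_{(\generic,k)}$. This is the combinatorial analogue of setting $f=0$.
\begin{itemize}
\item $\pi$ preserves the population condition and the number of noises, so it maps the $N$-noise part into $\mathcal{M}_{\generic}^N$.
\item $\pi$ commutes with $D$, since both are defined on generators and $D$ preserves the type of each variable.
\item Applying $\pi$ to \eqref{covariant_derivative} kills the term containing $z_{(\derivatives,0)}$, so $\pi(\nabla_{v}w)=\pi(v)\,D\pi(w)$.
\end{itemize}
Consequently $\pi(\langle\mathfrak{B}_N\rangle)$ is the span of all bracketings of $N$ copies of $z_{(\generic,0)}$ under the product $v\triangleright w := v\,Dw$, with every bracketing shape allowed, as in \eqref{eq:def_Bn}. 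It therefore suffices to show that this span is all of $\mathcal{M}_{\generic}^N$.

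\textbf{Step 2: the product is grafting.} Let $\mathcal{T}_\generic$ be the set of rooted trees whose nodes are all of noise type and whose edges are all thin. For such trees, $\Psi$ sends a node with $k$ children to $z_{(\generic,k)}$. Since $D$ is a derivation that raises one index by one, for trees $\sigma,\tau$ one has
\[
\Psi(\sigma)\,D\Psi(\tau)=\sum_{\text{nodes } x \text{ of } \tau}\Psi(\sigma\curvearrowright_x\tau)=\Psi(\sigma\curvearrowright\tau),
\]
where $\sigma\curvearrowright_x\tau$ grafts the root of $\sigma$ onto the node $x$ and $\curvearrowright$ is the pre-Lie grafting product. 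I will use the classical fact (Chapoton--Livernet) that rooted trees form the free pre-Lie algebra on one generator. In particular the single node generates: every tree with $N$ nodes lies in the span of iterated graftings of $N$ single nodes, which can be shown by induction on the number of nodes by writing a tree as the grafting of its branches onto the root and correcting lower terms. Hence the span in Step 1 equals $\Psi(\langle\text{trees in } \mathcal{T}_\generic \text{ with } N \text{ nodes}\rangle)$.

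\textbf{Step 3: surjectivity of $\Psi$.} It remains to show that every $z^\beta\in\mathfrak{M}^N_{\generic}$ is $\Psi(\tau)$ for some tree $\tau$. The condition $[\beta]=1$ reads $\sum_k k\beta(\generic,k)=N-1$: the prescribed out-degrees sum to the number of edges of a tree with $N$ nodes. Any such sequence of out-degrees is realised by a rooted tree. For instance, order the nodes by decreasing out-degree and attach children greedily in breadth-first order; an induction on $N$ shows this never gets stuck, because while nodes remain at least one node still has positive out-degree.

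I expect Step 2 to be the main obstacle, namely justifying that iterated products of the single node span all trees rather than only a subspace. If quoting Chapoton--Livernet is not acceptable, I would argue directly at the multi-index level instead. This means proving by induction on $N$ that each $z^\beta$ lies in the span, by using $z_{(\generic,0)}\,D(\cdot)$ together with the products $v\,Dw$ to raise indices one at a time, ordering monomials lexicographically and checking that the leading term has nonzero coefficient.
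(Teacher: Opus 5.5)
Your proposal is correct and follows the paper's proof. The paper also projects with $\pi$ onto pure-noise multi-indices, uses $\pi(\nabla_v w)=\pi(v)\,D\pi(w)$, identifies $v\,Dw$ with grafting via $\Psi(\sigma\curvearrowright\tau)=\Psi(\sigma)\,D\Psi(\tau)$, and uses that the single node generates all thin-edge noise trees under grafting. The differences are minor: where you cite Chapoton--Livernet, the paper proves generation directly from identity \eqref{identity_graft} by induction on the number of root edges, and you give an explicit degree-sequence argument for the surjectivity of $\Psi$ that the paper only asserts.
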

		\begin{proof}
			One considers the canonical projection 
			$ \pi \colon \mathcal {M}_{ \derivatives, \generic}\to \mathcal{M}_{\generic} $ that sends variables $ z_{(\derivatives,k)} $ to zero. One has
			\begin{equation*}
				\pi	\nabla_{v_1} v_2 = \pi(v_1) D \pi(v_2).
			\end{equation*}
			Therefore, it is sufficient to show that one generates 	$\mathfrak{M}_{\generic}$ with $ z_{(\generic,0)} $ and $ D$. One observes from the definition of the map $\Psi$ that for each $v \in \mathfrak{M}_{\generic}$, there exists a decorated tree $ \tau  $ belonging to $ \mathcal{T}_{\generic} $ the subset of $\mathcal{T}_{\text{\tiny{Chr}}}$, that does not contain decorated trees with thick edges. The linear span of $\mathcal{T}_{\generic}$ is generated by $ \generic $ and the grafting product $ \curvearrowright $. We provide a brief proof of this fact below.
			The  grafting product $ \curvearrowright $ between two trees  $ \sigma $ and $ \tau $ is defined as
			\begin{equation*}
				\sigma \curvearrowright \tau = \sum_{u \in N_{\tau}} \sigma \curvearrowright_u \tau
			\end{equation*} 
			with $ N_{\tau} $ is the set of nodes of $\tau$ and $ \curvearrowright_u $ connects the root of $ \sigma $ to the node $u$ via a new thin edge. As an example, one has
			\begin{equation*}
				\Xii \curvearrowright	\Xii = \Xiiii + \Xiiiie.
			\end{equation*}
			One has 
			\begin{equation} \label{identity_graft}
				\begin{tikzpicture}[scale=0.4,baseline=-2]
					\coordinate (root) at (0,-0.4);
					\coordinate (t2) at (-0.8,0.5);
					\coordinate (t3) at (0.8,0.5);
					\draw[] (t2) -- (root);
					\draw[] (t3) -- (root);
					\draw (0,-0.4) node[xi,label= {[label distance=-0.2em]below: \tiny  $     $} ] {};
					\draw (-0.9,0.7) node[] {\tiny$\tau_1$};
					\draw (0.9,0.7) node[] {\tiny$\tau_n$};
					\draw (0,0.7) node[] {\tiny$\cdots$};
				\end{tikzpicture} =  {\scriptsize \tau_1} \curvearrowright \begin{tikzpicture}[scale=0.4,baseline=-2]
					\coordinate (root) at (0,-0.4);
					\coordinate (t2) at (-0.8,0.5);
					\coordinate (t3) at (0.8,0.5);
					\draw[] (t2) -- (root);
					\draw[] (t3) -- (root);
					\draw (0,-0.4) node[xi,label= {[label distance=-0.2em]below: \tiny  $     $} ] {};
					\draw (-0.9,0.7) node[] {\tiny$\tau_2$};
					\draw (0.9,0.7) node[] {\tiny$\tau_n$};
					\draw (0,0.7) node[] {\tiny$\cdots$};
				\end{tikzpicture} - \sum_{i=2}^n \begin{tikzpicture}[scale=0.4,baseline=-2]
					\coordinate (root) at (0,-0.4);
					\coordinate (t2) at (-1.4,0.5);
					\coordinate (t3) at (1.4,0.5);
					\coordinate (t4) at (0,0.5);
					\draw[] (t2) -- (root);
					\draw[] (t3) -- (root);
					\draw[] (t4) -- (root);
					\draw (0,-0.4) node[xi,label= {[label distance=-0.2em]below: \tiny  $     $} ] {};
					\draw (-1.4,0.7) node[] {\tiny$\tau_2$};
					\draw (1.4,0.7) node[] {\tiny$\tau_n$};
					\draw (0.75,0.7) node[] {\tiny$ \cdots$};
					\draw (-0.65,0.7) node[] {\tiny$ \cdots$};
					\draw (0,0.7) node[] {\tiny$\bar{\tau}_i$};
				\end{tikzpicture}, \quad \bar{\tau}_i = \tau_1 \curvearrowright \tau_i.
			\end{equation}
			We can then apply the induction hypothesis  to conclude on the fact that $\mathcal{T}_{\generic}$ is generated by $ \generic $ and the grafting product $ \curvearrowright $, since the trees  on the right-hand side have a smaller number of edges attached to the root.
			We are now in position  to complete the proof. Let $v \in \mathfrak{M}_{\generic}$, then there exist $ \tau_i, \sigma_i \in \mathcal{T}_{\generic} $ such that
			\begin{equation} \label{v_grafts}
				v = \Psi(\sum_{i=1}^n \sigma_i \curvearrowright \tau_i)  =   \sum_{i=1}^n \Psi(\sigma_i) D \Psi(\tau_i) 
			\end{equation}
			where we have used the fact that $ \Psi $ is a morphism that sends $ \curvearrowright $ to $D$. The latter statement can be proved by using the inductive definition of the grafting product given in \eqref{identity_graft}. We conclude by applying the induction hypothesis to $ \Psi(\sigma_i)$ and $ \Psi(\tau_i) $.
		\end{proof}
		One can use the projection $\pi$ to check that  the family  $ \tilde{\mathfrak{B}}_4 =   \left \lbrace	\nabla_{\generic} 	\nabla_{\generic} 	\nabla_{\generic}  \generic, \, 	\nabla_{\generic} 	\nabla_{\nabla_{\generic}  \generic }   \generic, \,
		\nabla_{\nabla_{\nabla_{\generic} \generic}   \generic }   \generic \right \rbrace $ is linearly independent. Indeed, one has
		\begin{equation*}
			\begin{aligned}
				\pi \nabla_{\generic} 	\nabla_{\generic} 	\nabla_{\generic}  \generic & = 
				\pi	\nabla_{\generic} 	\nabla_{\generic}  \left( z_{ (\generic,0)} z_{ (\generic,1)}  
				\right)
				\\ &= \pi	\nabla_{\generic} 	 \left( z_{ (\generic,0)}^2 z_{ (\generic,2)}  +  z_{ (\generic,0)} z_{ (\generic,1)}^2 \right) 
				\\ &= z_{ (\generic,0)}^3 z_{ (\generic,3)} + 4 z_{ (\generic,0)}^2 z_{ (\generic,1)} z_{ (\generic,2)} +  z_{ (\generic,0)} z_{ (\generic,1)}^3  \,, 
				\\
				\pi\nabla_{\nabla_{\nabla_{\generic} \generic}   \generic }    \generic = 	z_{ (\generic,1)} \pi \nabla_{\nabla_{\generic} \generic}   \generic  & =   z_{ (\generic,1)}^2 \pi \nabla_{\generic} \generic   = z_{ (\generic,0)}  z_{ (\generic,1)}^3\,,
				\\
				\pi	\nabla_{\generic} 	\nabla_{\nabla_{\generic}  \generic }   \generic = \pi	\nabla_{\generic} \left( z_{ (\generic,0)}  z_{ (\generic,1)}^2 \right) & = 2 z_{ (\generic,0)}^2 z_{ (\generic,1)} z_{ (\generic,2)}  + z_{ (\generic,0)}  z_{ (\generic,1)}^3\,.
			\end{aligned}
		\end{equation*}
		We conclude from the triangular structure in the terms $z_{ (\generic,0)}^3 z_{ (\generic,3)}, z_{ (\generic,0)}^2 z_{ (\generic,1)} z_{ (\generic,2)}, z_{ (\generic,0)}  z_{ (\generic,1)}^3$.
		In the following proposition, we show that one derives an upper bound that matches the lower bound.
		\begin{proposition} \label{upper_bound_dimension}
			For any $N\geq 2$ one has $\dim(V_{\geo}^N) \leq \mathrm{Card}(\mathfrak{M}_{\generic}^N)$.
		\end{proposition}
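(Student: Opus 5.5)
The plan is to show that the projection $\pi \colon \mathcal{M}_{\derivatives,\generic} \to \mathcal{M}_{\generic}$ (sending every $z_{(\derivatives,k)}$ to zero), restricted to $V_{\geo}^N$, is injective. Since $\pi(V_\geo^N) \subset \mathcal{M}_{\generic}^N$, whose dimension is $\mathrm{Card}(\mathfrak{M}_{\generic}^N)$, this gives the bound immediately. The key is to understand how an element $v \in V_\geo^N$ is determined by its $z_{(\derivatives,\cdot)}$-free part. We therefore grade $v = \sum_{j\ge 0} v_j$ by the total number $j$ of variables of type $z_{(\derivatives,k)}$ appearing in each monomial. We then show that if $v \in V_\geo^N$ and $v_0 = \pi(v) = 0$, all $v_j$ vanish.

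By Theorem~\ref{geo_chain_rule}, $v\in V_\geo$ means $\phi_\geo(v) = [v, z_{(\diffh,0)}]$. We then sort the terms of this identity according to the number of $z_{(\derivatives,\cdot)}$ variables.
\begin{itemize}
\item The bracket $[v,z_{(\diffh,0)}] = v\, z_{(\diffh,1)} - z_{(\diffh,0)} D v$ preserves the count $j$.
\item Applying $\phi_\geo$ to a $z_{(\generic,k)}$ factor preserves $j$.
\item Applying $\phi_\geo$ to a $z_{(\derivatives,k)}$ factor produces two kinds of terms. The terms $-D^{k+1}(z_{(\derivatives,0)}z_{(\diffh,0)})$ preserve $j$. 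The term $-2 z_{(\diffh,k+2)}$ lowers $j$ by one.
\end{itemize}
Thus the component of degree $j-1$ in the identity reads
\[
(\phi^{0}_\geo - \mathrm{ad})(v_{j-1}) \;=\; 2\,\Delta(v_j),
\]
where $\phi^0_\geo$ is the $j$-preserving part of $\phi_\geo$, $\mathrm{ad} = [\cdot, z_{(\diffh,0)}]$, and $\Delta$ is the derivation that replaces one factor $z_{(\derivatives,k)}$ by $z_{(\diffh,k+2)}$. By induction on $j$, $v_{j-1}=0$ forces $\Delta(v_j)=0$.

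It then remains to show that $\Delta$ is injective on monomials containing no $z_{(\diffh,\cdot)}$ variable. Indeed, $\Delta$ maps such a monomial $z^\beta$ with $\beta(\derivatives,\cdot)\neq 0$ to a sum of monomials, each carrying exactly one $z_{(\diffh,\cdot)}$ variable, with strictly positive coefficients $\beta(\derivatives,k)$. To invert it, fix a target monomial $z^{\beta'} z_{(\diffh,m)}$. It arises from a unique preimage, namely $z^{\beta'}z_{(\derivatives,m-2)}$. Hence $\Delta$ sends distinct monomials to sums with disjoint supports, as one reads off by choosing, for each source monomial, its term with maximal $m$. This is a triangularity argument, and it gives injectivity on the span. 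We conclude that $\pi(v)=0$ implies $v=0$, which proves the bound.

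The main obstacle is the bookkeeping of the grading. We must verify that $D$ and the bracket do not mix the counts $j$ and do not create $z_{(\diffh,\cdot)}$ variables in ways that spoil the identification of the degree-$(j-1)$ component. We also need that $v$, lying in $\mathcal{M}_{\derivatives,\generic}$, contains no $z_{(\diffh,\cdot)}$, so that every term in the equation has exactly one such variable. If this splitting turns out to be delicate, the fallback is to apply the same argument to the lowest-order (in $j$) nonzero component of $v$, which yields a contradiction directly.
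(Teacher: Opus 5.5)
Your proposal is correct and follows essentially the paper's route (the triangular system sketched after Theorem~\ref{dim_geo} and illustrated for $N=4$). The term $-2\, z_{(\begin{tikzpicture}[scale=0.2,baseline=-2]\coordinate (root) at (0,0);\node[diff] (rootnode) at (root) {};\end{tikzpicture},k+2)}$ in $\phi_\geo(z_{(\derivatives,k)})$ is the only contribution that lowers the number of $z_{(\derivatives,\cdot)}$ variables, so each coefficient equation of $\hat{\phi}_\geo(v)=0$ has a unique leading term with maximal $z_{(\derivatives,\cdot)}$-count, and the multi-indices of $\mathfrak{M}_{\generic}^N$ are the free variables; your injectivity of $\Delta$, and hence of $\pi$ on $V_\geo^N$, is a clean repackaging of exactly this triangularity, and it uses only the inclusion $V_\geo\subset\ker\hat{\phi}_\geo$, which is the direction the paper actually proves.
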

		
		Propositions \ref{lower_bound_dimension} and \ref{upper_bound_dimension} show that $ \mathfrak{B} $ generates $ V_{\geo}$ with an explicit dimension.
		
		\begin{theorem}  \label{dim_geo}
			For any $N\geq 2$ the linear vector space generated by $\mathfrak{B}_N$ coincides with	$V_{\geo}^N$ and $\dim(V_{\geo}^N) = \mathrm{Card}(\mathfrak{M}_{\generic}^N)$.
		\end{theorem}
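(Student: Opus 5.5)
Theorem~\ref{dim_geo} follows by combining the two dimension bounds, so the plan is simply to chain Proposition~\ref{lower_bound_dimension} and Proposition~\ref{upper_bound_dimension}.

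First, by construction of $\mathfrak{B}_N$ and repeated use of Proposition~\ref{geometric_Nabla}, every element of $\mathfrak{B}_N$ lies in the kernel of $\hat{\phi}_\geo$. Hence, by Theorem~\ref{geo_chain_rule}, it lies in $V_{\geo}$. Each application of $\nabla$ adds up the noise counts of its two arguments, since $D$ preserves the number of $z_{(\generic,k)}$ variables and $z_{(\derivatives,0)}$ carries no noise. Therefore every element of $\mathfrak{B}_N$ contains exactly $N$ noises, and $\langle \mathfrak{B}_N\rangle \subset V_{\geo}^N$.

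Next, Proposition~\ref{lower_bound_dimension} gives $\dim \langle \mathfrak{B}_N\rangle \geq \mathrm{Card}(\mathfrak{M}_{\generic}^N)$. Its mechanism is that the projection $\pi$ maps $\langle \mathfrak{B}_N\rangle$ onto $\mathcal{M}_{\generic}^N$, because $\pi \nabla_{v_1} v_2 = \pi(v_1) D\pi(v_2)$ and $\mathfrak{M}_{\generic}$ is generated from $z_{(\generic,0)}$ by the operations $(v,w)\mapsto vDw$ via $\Psi$ and grafting. Here one should check that "larger than" is meant in the non-strict sense. One should also check that the grading by $N$ is respected: a grafting decomposition of an element with $N$ noises only involves pieces whose noise counts sum to $N$, which matches the index splitting in \eqref{eq:def_Bn}.

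Finally, Proposition~\ref{upper_bound_dimension} gives $\dim V_{\geo}^N \le \mathrm{Card}(\mathfrak{M}_{\generic}^N)$. Chaining the bounds yields
\begin{equation*}
\mathrm{Card}(\mathfrak{M}_{\generic}^N) \le \dim\langle \mathfrak{B}_N\rangle \le \dim V_{\geo}^N \le \mathrm{Card}(\mathfrak{M}_{\generic}^N),
\end{equation*}
so all three quantities are equal. An inclusion of finite-dimensional spaces of equal dimension is an equality, so $\langle \mathfrak{B}_N\rangle = V_{\geo}^N$. Finite-dimensionality holds because the population condition together with a fixed noise number $N$ leaves only finitely many multi-indices.

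The only nontrivial ingredient is the upper bound of Proposition~\ref{upper_bound_dimension}, which is assumed here. If it had to be proved, I would show that $\pi$ restricted to $V_{\geo}^N$ is injective, that is, that an element of the kernel of $\hat{\phi}_\geo$ is determined by its thick-free part. I would argue by induction on the number of $z_{(\derivatives,k)}$ factors. The term $-2z_{(\diffh,k+2)}$ in $\phi_\geo(z_{(\derivatives,k)})$ is the only source of monomials containing $z_{(\diffh,k+2)}$ with one fewer $z_{(\derivatives,\cdot)}$. Comparing coefficients in $\hat{\phi}_\geo(v)=0$ on these monomials should then express the coefficients of terms with $m$ thick factors through those with $m-1$. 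The main obstacle is this triangularity argument.
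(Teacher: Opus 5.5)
Your proposal is correct and follows the paper's own argument. The paper obtains Theorem~\ref{dim_geo} exactly by chaining $\langle\mathfrak{B}_N\rangle\subset V_{\geo}^N$ with the lower bound of Proposition~\ref{lower_bound_dimension}, read non-strictly as you note, and the upper bound of Proposition~\ref{upper_bound_dimension}. Your sketch of the upper bound also matches the paper's outline: the only thick-count-lowering term in $\phi_\geo$ makes the system triangular, the leading terms carry the maximal number of thick variables, and the thick-free multi-indices are the free variables, which is precisely injectivity of $\pi$ on $V_{\geo}^N$.
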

		
		The full proof of Proposition \ref{upper_bound_dimension} is given in \cite[Proposition 3.13]{CBB}. We provide below the main idea and illustrate it for the case $N=4$. One considers the pre-image under $ \hat{\varphi}_{\geo} $ which boils down to looking at the pre-image of multi-indices of the form $  z_{ (\begin{tikzpicture}[scale=0.2,baseline=-2]
				\coordinate (root) at (0,0);
				\node[diff] (rootnode) at (root) {};
			\end{tikzpicture},k)} z^{\eta} $. From Definition \ref{def:injectivity_Upsilon}, a variable of the form $ z_{ (\begin{tikzpicture}[scale=0.2,baseline=-2]
				\coordinate (root) at (0,0);
				\node[diff] (rootnode) at (root) {};
			\end{tikzpicture},k)} $ comes either from  $z_{ (\generic,\ell)}$ or $ z_{(\derivatives,\ell)}$. Then, one builds a triangular system for which it is easy to compute a lower bound on the dimension of its space of its solutions.
		The triangularity comes from the fact that one gets only one leading term with a maximum number of variables $ z_{(\derivatives,k)} $ in each equation of the system. The multi-indices without such a factor are the free variables of the system. 
		One has for $ v \in V_{\geo}^4 $ 
		\begin{equation*}
			-2	 \,  \langle z_{( \derivatives,2)}   z_{(\generic,0)}^4, v  \rangle  +  \, \langle z_{(\generic,0)}^3z_{(\generic,3)}, v \rangle = 0.
		\end{equation*}
		where $ <\cdot,\cdot> $ is an inner product on multi-indices given by
		\begin{equation*}
			< z^{\beta} , z^{\bar{\beta}} > = \delta_{\beta, \bar{\beta}}.
		\end{equation*}
		One gets such an equation from the following computations:  
		\begin{equation*}
			\begin{aligned}
				\phi_\geo(	z_{ (\generic,3)})  &= D^{3}  [	z_{ (\generic,0 )}, z_{ (\begin{tikzpicture}[scale=0.2,baseline=-2]
						\coordinate (root) at (0,0);
						\node[diff] (rootnode) at (root) {};
					\end{tikzpicture},0)}]  = 	z_{ (\generic,0 )}  z_{ (\begin{tikzpicture}[scale=0.2,baseline=-2]
						\coordinate (root) at (0,0);
						\node[diff] (rootnode) at (root) {};
					\end{tikzpicture},4)} + \cdots \\
				\phi_\geo( z_{( \derivatives,2)} ) & = - D^{3} \left( 	z_{( \derivatives,0)} z_{ (\begin{tikzpicture}[scale=0.2,baseline=-2]
						\coordinate (root) at (0,0);
						\node[diff] (rootnode) at (root) {};
					\end{tikzpicture},0)} 
				\right) - 2  z_{ (\begin{tikzpicture}[scale=0.2,baseline=-2]
						\coordinate (root) at (0,0);
						\node[diff] (rootnode) at (root) {};
					\end{tikzpicture},4) } = - 2  z_{ (\begin{tikzpicture}[scale=0.2,baseline=-2]
						\coordinate (root) at (0,0);
						\node[diff] (rootnode) at (root) {};
					\end{tikzpicture},4) } + \cdots
			\end{aligned}
		\end{equation*}
		In this case, one looks at the pre-image of $  z_{ (\begin{tikzpicture}[scale=0.2,baseline=-2]
				\coordinate (root) at (0,0);
				\node[diff] (rootnode) at (root) {};
			\end{tikzpicture},4) } z^{\eta} $ with  $ z^{\eta} = z_{(\generic,0)}^4 $. One gets only two possibilities that correspond to the two multi-indices in the equation above. If one considers $ z_{ (\begin{tikzpicture}[scale=0.2,baseline=-2]
				\coordinate (root) at (0,0);
				\node[diff] (rootnode) at (root) {};
			\end{tikzpicture},3)} z^{\eta}$, then one has two possible choices for the cofactor $z^{\eta}=z_{ (\generic,0)}^4z_{( \derivatives,0)} $,  $z^{\eta}= z_{ (\generic,0)}^3 z_{ (\generic,1)}$. One gets
		\begin{equation*}
			\begin{aligned}
				-2 \	\langle z_{ (\generic,0)}^4z_{( \derivatives,0)}z_{( \derivatives,1)} , v \rangle - \langle z_{( \derivatives,2)}   z_{(\generic,0)}^4, v\rangle + \langle z_{ (\generic,0)}^3z_{ (\generic,1)}z_{( \derivatives,0)}, v\rangle &= 0\,,
				\\
				-2 \	\langle  z_{ (\generic,0)}^3 z_{ (\generic,1)}z_{( \derivatives,1)}, v \rangle +  2\langle z_{ (\generic,0)}^3z_{ (\generic,3)} ,v \rangle + \langle z_{ (\generic,0)}^2z_{ (\generic,1)}z_{ (\generic,2)}, v \rangle  &= 0\, .
			\end{aligned}
		\end{equation*}
		In the last case $ z_{ (\begin{tikzpicture}[scale=0.2,baseline=-2]
				\coordinate (root) at (0,0);
				\node[diff] (rootnode) at (root) {};
			\end{tikzpicture},2)} z^{\eta} $, one has four possible choices for the cofactor $z^{\eta}=  z_{ (\generic,0)}^4$,   $z^{\eta}=z_{(\generic,0)}^2 z_{(\generic,1)}^2$,  $z^{\eta}= z_{ (\generic,0)}^3z_{ (\generic,1)}$,  $z^{\eta}= z_{ (\generic,0)}^4z_{(\derivatives,1)}$,  which imply the four  equations 
		\begin{equation*}
			\begin{aligned}
				&	-6 \	\langle z_{( \derivatives,0)}^3   z_{(\generic,0)}^4, v \rangle + \ \langle z_{(\generic,0)}^3 z_{(\generic,1)}, v \rangle  = 0\,,
				\\&
				-2\	\langle z_{( \derivatives,0)}   z_{(\generic,0)}^2 z_{(\generic,1)}^2, v \rangle + 3\ \langle z_{(\generic,0)} z_{(\generic,1)}^3, v \rangle +\ \langle z_{(\generic,0)} z_{(\generic,1)}z_{(\generic,2)}, v \rangle = 0\,,\\&
				-2\langle z_{ (\generic,0)}^3z_{ (\generic,1)}z_{( \derivatives,0)}, v\rangle -4 \	\langle z_{( \derivatives,0)}^2   z_{(\generic,0)}^3 z_{(\generic,1)}, v \rangle + 2\ \langle z_{(\generic,0)}^2 z_{(\generic,1)}^2, v \rangle =0\,,\\&
				- 2\	\langle z_{ (\generic,0)}^4z_{(\derivatives,1)}z_{(\derivatives,0)} v \rangle + \langle z_{(\generic,0)}^3 z_{(\generic,1)}z_{(\derivatives,1)},v \rangle - 3\langle z_{( \derivatives,2)}   z_{(\generic,0)}^4,v \rangle = 0\,.
			\end{aligned}
		\end{equation*}
		One notices that the system is triangular with three free variables $ \ \langle z_{(\generic,0)}^3 z_{(\generic,1)}, v \rangle $, $ \langle z_{(\generic,0)} z_{(\generic,1)}^3, v \rangle $ and  $\ \langle z_{(\generic,0)} z_{(\generic,1)}z_{(\generic,2)}, v \rangle$.
		The proof of Proposition~\ref{upper_bound_dimension} is inspired by a proof by Franck Gabriel  that was part of a preliminary draft of \cite{BGHZ}. As with multi-indices, one can conduct the same analysis on the pre-images of the map $ \hat{\varphi}_{\geo} $. It is more involved as the definition of $  \hat{\varphi}_{\geo}$ on decorated trees is more complex. Unfortunately the system obtained is not triangular and one has to compute the orbits by hand. These orbits arise from the following observation:
		\begin{equation*}
			\begin{aligned}
				\hat{\phi}_\geo \left( \begin{tikzpicture}[scale=0.4,baseline=-2]
					\coordinate (root) at (0,-0.4);
					\coordinate (t1) at (-0.3,0.5);
					\coordinate (t2) at (-1.1,0.5);
					\coordinate (t3) at (1.1,0.5);
					\coordinate (t4) at (0.3,0.5);
					\coordinate (tau1) at (1,0.6);
					\draw[kernels2] (t1) -- (root);
					\draw[kernels2] (t2) -- (root);
					\draw[] (t3) -- (root);
					\draw[] (t4) -- (root);
					\node[not] (rootnode) at (root) {};
					\draw (-1.2,0.7) node[] {\tiny$\tau_1$};
					\node[not] (rootnode) at (root) {};
					\draw (1.3,0.7) node[] {\tiny$\tau_4$};
					\draw (-0.4,0.7) node[] {\tiny$\tau_{\tiny{2}}$};
					\draw (0.4,0.7) node[] {\tiny$\tau_3$};
				\end{tikzpicture}  \right)& = - 2 \, \begin{tikzpicture}[scale=0.4,baseline=-2]
					\coordinate (root) at (0,-0.4);
					\coordinate (t1) at (-0.3,0.5);
					\coordinate (t2) at (-1.1,0.5);
					\coordinate (t3) at (1.1,0.5);
					\coordinate (t4) at (0.3,0.5);
					\coordinate (tau1) at (1,0.6);
					\draw[] (t1) -- (root);
					\draw[] (t2) -- (root);
					\draw[] (t3) -- (root);
					\draw[] (t4) -- (root);
					\node[not] (rootnode) at (root) {};
					\draw (-1.2,0.7) node[] {\tiny$\tau_1$};
					\node[diff] (rootnode) at (root) {};
					\draw (1.3,0.7) node[] {\tiny$\tau_4$};
					\draw (-0.4,0.7) node[] {\tiny$\tau_{\tiny{2}}$};
					\draw (0.4,0.7) node[] {\tiny$\tau_3$};
				\end{tikzpicture} + (\cdots)
				\\
				\hat{\phi}_\geo \left( \begin{tikzpicture}[scale=0.4,baseline=-2]
					\coordinate (root) at (0,-0.4);
					\coordinate (t1) at (-0.3,0.5);
					\coordinate (t2) at (-1.1,0.5);
					\coordinate (t3) at (1.1,0.5);
					\coordinate (t4) at (0.3,0.5);
					\coordinate (tau1) at (1,0.6);
					\draw[kernels2] (t1) -- (root);
					\draw[kernels2] (t2) -- (root);
					\draw[] (t3) -- (root);
					\draw[] (t4) -- (root);
					\node[not] (rootnode) at (root) {};
					\draw (-1.2,0.7) node[] {\tiny$\tau_1$};
					\node[not] (rootnode) at (root) {};
					\draw (1.3,0.7) node[] {\tiny$\tau_4$};
					\draw (-0.4,0.7) node[] {\tiny$\tau_{\tiny{3}}$};
					\draw (0.4,0.7) node[] {\tiny$\tau_2$};
				\end{tikzpicture}  \right) & = - 2 \, \begin{tikzpicture}[scale=0.4,baseline=-2]
					\coordinate (root) at (0,-0.4);
					\coordinate (t1) at (-0.3,0.5);
					\coordinate (t2) at (-1.1,0.5);
					\coordinate (t3) at (1.1,0.5);
					\coordinate (t4) at (0.3,0.5);
					\coordinate (tau1) at (1,0.6);
					\draw[] (t1) -- (root);
					\draw[] (t2) -- (root);
					\draw[] (t3) -- (root);
					\draw[] (t4) -- (root);
					\node[not] (rootnode) at (root) {};
					\draw (-1.2,0.7) node[] {\tiny $\tau_1$};
					\node[diff] (rootnode) at (root) {};
					\draw (1.3,0.7) node[] {\tiny $\tau_4$};
					\draw (-0.4,0.7) node[] {\tiny $\tau_{2}$};
					\draw (0.4,0.7) node[] {\tiny $\tau_3$};
				\end{tikzpicture} + (\cdots).
			\end{aligned}
		\end{equation*}
		where the remainder terms in $(\cdots)$ are not equal. This identity is computed
		from the definition of $ \hat{\phi}_\geo $ on decorated given in \cite[Section 6.1]{BGHZ}.
		If the two trees $\tau_2$ and $\tau_3$ are distinct then they produce two variables in the system with the same number of thick brown edges. This is where one must compute the orbits. 
		While it is possible to perform this computation by hand for small $N$, it becomes intractable in general without sophisticated techniques. Proofs by hand were performed in \cite{BGHZ} up to four noises. Therefore, one needs to change perspective by considering tools coming from
		operadic theory and homological algebra as used in \cite{BD24}. We finish this section by summarising the main arguments of \cite{BD24}.
		
		When one considers the geometric KPZ for sufficiently large $n,d$, the multi-indices are not sufficient to encode the elementary differentials and one must use an extension of the set $ \mathcal{T}_{\text{\tiny{Chr}}} $ to obtain the injectivity of Theorem \ref{injectivity_Upsilon}. It is obtained by adding decorations to the noise type edges. But in fact, one works with a smaller set for the counter-terms in the equation. Indeed, if one considers the following stochastic iterated integrals: 
		\begin{equation*}
			\begin{aligned}
				&	(\partial_x K * \xi^{\varepsilon}_i) \, \partial_x K* ( \xi^{\varepsilon}_j  K * (\xi^{\varepsilon}_k	 K * \xi^{\varepsilon}_{\ell}))
				= \int \partial_x K(\cdot - z_1) 
				\partial_x K(\cdot - z_2) \\ &  K(z_2 - z_3)
				K(z_3 - z_4) \xi^{\varepsilon}_i(z_1) \xi^{\varepsilon}_j(z_2) \xi^{\varepsilon}_k(z_3)
				\xi^{\varepsilon}_{\ell}(z_4) d z_1 ... d z_4. 
			\end{aligned}
		\end{equation*}
		Then, taking the expectation and using 
		Isserlis's theorem \eqref{Isserles}, one has
		\begin{equation*}
			\begin{aligned}
				&	\mathbb{E}( (\partial_x K * \xi^{\varepsilon}_i) \, \partial_x K* ( \xi^{\varepsilon}_j  K * (\xi^{\varepsilon}_k	 K * \xi^{\varepsilon}_{\ell})) )
				= \int \partial_x K(\cdot - z_1) 
				\partial_x K(\cdot - z_2) \\ &  K(z_2 - z_3)
				K(z_3 - z_4) \mathbb{E}(\xi^{\varepsilon}_i(z_1) \xi^{\varepsilon}_i(z_2)) \mathbb{E}(\xi^{\varepsilon}_{\ell}(z_3)
				\xi^{\varepsilon}_{\ell}(z_4)) d z_1 ... d z_4 + (\cdots)
			\end{aligned}
		\end{equation*}
	Since the noises $ \xi_i^{\varepsilon} $ are  i.i.d, one can identify the indices inside the expectation ( $i=j$ and $k = \ell$). One does not need to specify the labels on the noises but only their partition into pairs which significantly reduces  the coding. For example, at the level of elementary differentials, one has 
		\begin{equation*}
			\big( F_{\Gamma, \sigma}(\IiXiiiiacc)\big)^{\alpha}(u)
			= \sum_{i,j = 1}^n 2   \Gamma^\alpha_{\beta\gamma}(u)\,   \sigma_i^\beta(u)\,\partial_{\eta}\sigma_i^\gamma(u)\,\partial_{\zeta}\sigma_j^\eta(u)\, \sigma_j^\zeta(u)\;.
		\end{equation*}
		The elementary differentials looks similar to those with multi-indices in terms of monomials in $ \Gamma, \sigma$ and their derivatives. The main difference lies in the indices that encode the tree structure via the connectivity of the various nodes. One natural generating set consists of the iterated combinatorial derivatives which are defined on these decorated trees by
		\begin{equation} \label{covariant_derivative_semi_linear}
			\nabla_{\tau_1}\tau_2 = 
			\begin{tikzpicture}[scale=0.2,baseline=2]
				\draw[symbols]  (-.5,2.5) -- (0,0) ;
				\draw[tinydots] (0,0)  -- (0,-1.3);
				\node[var] (root) at (0,-0.1) {\tiny{$ \tau_1 $ }};
				\node[var] (diff) at (-0.5,2.5) {\tiny{$ \tau_2 $ }};
				\node[blank] at (0.3,1.25) {\tiny{}};
			\end{tikzpicture} \, +  \frac{1}{2}\;
			\begin{tikzpicture}[scale=0.2,baseline=2]
				\coordinate (root) at (0,0);
				\coordinate (t1) at (-1,2);
				\coordinate (t2) at (1,2);
				\draw[tinydots] (root)  -- +(0,-0.8);
				\draw[kernels2] (t1) -- (root);
				\draw[kernels2] (t2) -- (root);
				\node[not] (rootnode) at (root) {};
				\node[var] (t1) at (t1) {\tiny{$ \tau_1 $}};
				\node[var] (t1) at (t2) {\tiny{$ \tau_2 $}};
				\node[blank] at (-1.2,0.6) {\tiny{$$}};
				\node[blank] at (1.2,0.6) {\tiny{$$}};
			\end{tikzpicture}
		\end{equation}
		where the $ \begin{tikzpicture}[scale=0.2,baseline=2]
			\draw[symbols]  (-.5,2.5) -- (0,0) ;
			\draw[tinydots] (0,0)  -- (0,-1.3);
			\node[var] (root) at (0,-0.1) {\tiny{$ \tau_1 $ }};
			\node[var] (diff) at (-0.5,2.5) {\tiny{$ \tau_2 $ }};
			\node[blank] at (0.3,1.25) {\tiny{}};
		\end{tikzpicture} $  are all  the decorated trees obtained by  grafting  $ \tau_2 $ onto $ \tau_1 $ (connecting the root of $ \tau_2 $ to a node of $ \tau_1 $ via a thin edge  and summing over all the possibilities). Of course, in the end, we retain only those iterated covariant derivatives that respect the fact that the noises must appear in pairs. Below, we list the  $16$ elements containing two and four noises
		\begin{equation*}
			\begin{aligned}
				&
				\big\{ \nabla_{\generic}\generic,  \nabla_{\genericb}\nabla_{\generic}\nabla_{\genericb}\generic, 
				\nabla_{\generic}\nabla_{\genericb}\nabla_{\genericb}\generic,
				\nabla_{\genericb}\nabla_{\genericb}\nabla_{\generic}\generic, 
				\nabla_{\genericb}\nabla_{\nabla_{\genericb}\generic}\generic, 
				\nabla_{\nabla_{\genericb}\generic} \nabla_{\genericb}\generic, \\ & 
				\nabla_{\nabla_{\generic}\genericb} \nabla_{\genericb}\generic, 
				\nabla_{\nabla_{\genericb}\genericb}\nabla_{\generic}\generic, 
				\nabla_{\nabla_{\genericb}\nabla_{\genericb}\generic}\generic,
				\nabla_{\nabla_{\genericb}\nabla_{\generic}\generic}\genericb,
				\nabla_{\nabla_{\nabla_{\genericb}\generic}\generic}\genericb, \\ & 
				\nabla_{\nabla_{\nabla_{\genericb}\generic}\genericb}\generic, 
				\nabla_{\generic}\nabla_{\nabla_{\genericb}\generic}\genericb,
				\nabla_{\nabla_{\generic}\nabla_{\genericb}\generic}\genericb,
				\nabla_{\nabla_{\nabla_{\generic}\generic}\genericb}\genericb,
				\nabla_{\genericb}\nabla_{\nabla_{\generic}\generic}\genericb\ 
				\big\}.
			\end{aligned}
		\end{equation*}
		Then, one wants to check that this is actually a generating set for the chain rule and to compute its dimension. It turns out that this is quite tricky with such representation. Also, one can replace Gaussian noises by non-Gaussian ones. In this case, one obtains an analogue of Isserlis's theorem which considers general partitions rather than only pairs. Hence, one wants to remove the generators (pairs of noises in the Gaussian case) and carry out the proof in this more general context. This is where Species (introduced by Joyal in \cite{J81}) and Operad theory (see \cite{LV12} for an overview) enter the game. One works with decorated trees from $ \mathcal{T}_{\text{\tiny{Chr}}} $ but where the noises are replaced with labelled nodes. For example, one has
		\begin{equation*}
			\Xii \longrightarrow 	\begin{tikzpicture}[scale=0.2,baseline=-5]
				\coordinate (root) at (0,-1);
				\coordinate (center) at (0,1);
				\draw[] (root) -- (center);
				\node[var1] (rootnode) at (center) {\tiny{$ 2 $}};
				\node[var1] (rootnode) at (root) { \tiny{$  1 $}};
			\end{tikzpicture}, \quad \IXitwo \longrightarrow \begin{tikzpicture}[scale=0.2,baseline=-5]
				\coordinate (root) at (0,-1);
				\coordinate (right) at (1,1);
				\coordinate (left) at (-1,1);
				\draw[kernels2] (root) -- (left);
				\draw[kernels2] (root) -- (right);
				\node[var1] (rootnode) at (left) {\tiny{$ 1 $}};
				\node[var1] (rootnode) at (right) {\tiny{$ 2 $}};
				\node[not] (rootnode) at (root) {};
			\end{tikzpicture}, \quad	\IiXiiiiacc \longrightarrow \begin{tikzpicture}[scale=0.2,baseline=-5]
				\coordinate (root) at (0,-1);
				\coordinate (right) at (1,1);
				\coordinate (left) at (-1,1);
				\coordinate (leftv) at (-0,3);
				\coordinate (leftvv) at (-1,5);
				\draw[] (left) -- (leftv);
				\draw[] (leftv) -- (leftvv);
				\draw[kernels2] (root) -- (left);
				\draw[kernels2] (root) -- (right);
				\node[var1] (rootnode) at (left) {\tiny{$ 1 $}};
				\node[var1] (rootnode) at (right) {\tiny{$ 2 $}};
				\node[not] (rootnode) at (root) {};
				\node[var1] (rootnode) at (leftv) { \tiny{$  3 $}};
				\node[var1] (rootnode) at (leftvv) { \tiny{$  4 $}};
			\end{tikzpicture}.
		\end{equation*}
		The labels serve to distinguish the nodes. The \emph{species of Christoffel trees} assigns to every finite set $S$ the vector space $\NT(S)$, which is spanned by the basis of all Christoffel trees $\mathcal{T}_{\text{\tiny{Chr}}}$ whose noise nodes are placed in a bijection with $S$. From this species, one can define an operad by considering the partial compositions defined below:
		\begin{equation*}
			\begin{tikzpicture}[scale=0.2,baseline=-5]
				\coordinate (root) at (0,-1);
				\coordinate (center) at (0,1);
				\draw[] (root) -- (center);
				\node[var1] (rootnode) at (center) {\tiny{$ 2 $}};
				\node[var1] (rootnode) at (root) { \tiny{$  1 $}};
			\end{tikzpicture}\,\circ_{\tiny{1}} \begin{tikzpicture}[scale=0.2,baseline=-5]
				\coordinate (root) at (0,-1);
				\coordinate (right) at (1,1);
				\coordinate (left) at (-1,1);
				\draw[kernels2] (root) -- (left);
				\draw[kernels2] (root) -- (right);
				\node[var1] (rootnode) at (left) {\tiny{$ a $}};
				\node[var1] (rootnode) at (right) {\tiny{$ b $}};
				\node[not] (rootnode) at (root) {};
			\end{tikzpicture}=
			\begin{tikzpicture}[scale=0.2,baseline=-5]
				\coordinate (root) at (0,-1);
				\coordinate (right) at (1,1);
				\coordinate (left) at (-1,1);
				\coordinate (leftv) at (-1,3);
				\draw[] (left) -- (leftv);
				\draw[kernels2] (root) -- (left);
				\draw[kernels2] (root) -- (right);
				\node[var1] (rootnode) at (left) {\tiny{$ a $}};
				\node[var1] (rootnode) at (right) {\tiny{$ b $}};
				\node[not] (rootnode) at (root) {};
				\node[var1] (rootnode) at (leftv) { \tiny{$  2 $}};
			\end{tikzpicture}+
			\begin{tikzpicture}[scale=0.2,baseline=-5]
				\coordinate (root) at (0,-1);
				\coordinate (right) at (1,1);
				\coordinate (rightv) at (1,3);
				\coordinate (left) at (-1,1);
				\draw[] (right) -- (rightv);
				\draw[kernels2] (root) -- (left);
				\draw[kernels2] (root) -- (right);
				\node[var1] (rootnode) at (left) {\tiny{$ a $}};
				\node[var1] (rootnode) at (right) {\tiny{$ b $}};
				\node[not] (rootnode) at (root) {};
				\node[var1] (rootnode) at (rightv) { \tiny{$  2 $}};
			\end{tikzpicture}+
			\begin{tikzpicture}[scale=0.2,baseline=-5]
				\coordinate (root) at (0,-1);
				\coordinate (right) at (2,1);
				\coordinate (left) at (0,1);
				\coordinate (newv) at (-2,1);
				\draw[] (root) -- (newv);
				\draw[kernels2] (root) -- (left);
				\draw[kernels2] (root) -- (right);
				\node[var1] (rootnode) at (left) {\tiny{$ a $}};
				\node[var1] (rootnode) at (right) {\tiny{$ b $}};
				\node[not] (rootnode) at (root) {};
				\node[var1] (rootnode) at (newv) { \tiny{$  2 $}};
			\end{tikzpicture}
			,\quad
			\begin{tikzpicture}[scale=0.2,baseline=-5]
				\coordinate (root) at (0,-1);
				\coordinate (center) at (0,1);
				\draw[] (root) -- (center);
				\node[var1] (rootnode) at (center) {\tiny{$ 2 $}};
				\node[var1] (rootnode) at (root) { \tiny{$  1 $}};
			\end{tikzpicture}\, \circ_2 \begin{tikzpicture}[scale=0.2,baseline=-5]
				\coordinate (root) at (0,-1);
				\coordinate (right) at (1,1);
				\coordinate (left) at (-1,1);
				\draw[kernels2] (root) -- (left);
				\draw[kernels2] (root) -- (right);
				\node[var1] (rootnode) at (left) {\tiny{$ a $}};
				\node[var1] (rootnode) at (right) {\tiny{$ b $}};
				\node[not] (rootnode) at (root) {};
			\end{tikzpicture}=
			\begin{tikzpicture}[scale=0.2,baseline=-5]
				\coordinate (root) at (0,-1);
				\coordinate (vert) at (0,1);
				\coordinate (right) at (1,3);
				\coordinate (left) at (-1,3);
				\draw[] (root) -- (vert);
				\draw[kernels2] (vert) -- (left);
				\draw[kernels2] (vert) -- (right);
				\node[var1] (rootnode) at (left) {\tiny{$ a $}};
				\node[var1] (rootnode) at (right) {\tiny{$ b $}};
				\node[not] (rootnode) at (vert) {};
				\node[var1] (rootnode) at (root) { \tiny{$  1 $}};
			\end{tikzpicture}
			.
		\end{equation*}
		
		One inserts the labelled tree into the node with the corresponding label and connects the edges previously attached to this node to the nodes of the inserted tree in all possible ways.
		In the first computation, one inserts the tree at the node labelled $ 1 $ and has three possibilities for reconnecting the edge attached to that node. To form an operad, the partial composition must satisfy natural properties. First, one has the identity axiom via the existence of an element $ \mathrm{e} $ of arity one such that 
		\begin{equation*}
			\tau \circ_i\mathrm{e}  =\mathrm{e}\circ_1 \tau=\tau.
			\end{equation*}
			The second axiom is the sequential composition axiom
			\begin{equation*}
				(\tau_1 \circ_i \tau_2) \circ_j \tau_3  = \tau_1 \circ_i (\tau_2 \circ_j \tau_3), 
				\end{equation*}
				where $i$ is a label of $ \tau_1 $ and $ j $ is a label of $\tau_2$.
				The third axiom is the parallel composition axiom given by 
					\begin{equation*}
					(\tau_1 \circ_i \tau_2) \circ_j \tau_3  = (\tau_1 \circ_j \tau_3) \circ_i \tau_2, 
				\end{equation*}
			where $i, j$ are labels of $ \tau_1 $.
		
		\begin{proposition} \label{operad_christoffel}
			The partial compositions $\circ_v$ make the linear species $\NT$ into an operad. That operad is generated by its binary operations given below
			\begin{equation} \label{generators}
				\begin{tikzpicture}[scale=0.2,baseline=-5]
					\coordinate (root) at (0,-1);
					\coordinate (center) at (0,1);
					\draw[] (root) -- (center);
					\node[var1] (rootnode) at (center) {\tiny{$ 2 $}};
					\node[var1] (rootnode) at (root) { \tiny{$  1 $}};
				\end{tikzpicture} \, , \quad \begin{tikzpicture}[scale=0.2,baseline=-5]
					\coordinate (root) at (0,-1);
					\coordinate (center) at (0,1);
					\draw[] (root) -- (center);
					\node[var1] (rootnode) at (center) {\tiny{$ 1 $}};
					\node[var1] (rootnode) at (root) { \tiny{$  2 $}};
				\end{tikzpicture} \, , \quad \begin{tikzpicture}[scale=0.2,baseline=-5]
					\coordinate (root) at (0,-1);
					\coordinate (right) at (1,1);
					\coordinate (left) at (-1,1);
					\draw[kernels2] (root) -- (left);
					\draw[kernels2] (root) -- (right);
					\node[var1] (rootnode) at (left) {\tiny{$ 1 $}};
					\node[var1] (rootnode) at (right) {\tiny{$ 2 $}};
					\node[not] (rootnode) at (root) {};
				\end{tikzpicture}.
			\end{equation}
			Moreover, that operad is isomorphic to the coproduct of the operads pre-Lie denoted by $\PL$ and the operad  Commutative-Magmatic denoted by $\ComMag$. This coproduct is denoted by  $\PL \vee \ComMag$.
		\end{proposition}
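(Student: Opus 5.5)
The proof has three parts. First I check the operad axioms directly on the combinatorial definition of $\circ_v$. Then I show that the three binary trees \eqref{generators} generate. Finally I identify the operad with $\PL\vee\ComMag$ by matching a surjection from the free side against a basis count.

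\textbf{Operad axioms.} The unit $\mathrm{e}$ is the one-node tree with a single labelled node. For the sequential axiom $(\tau_1\circ_i\tau_2)\circ_j\tau_3=\tau_1\circ_i(\tau_2\circ_j\tau_3)$, I expand both sides as sums over reconnection data. Each side is a sum over functions assigning every edge incident to the replaced nodes (from the parent side or from children) to a node of the inserted tree. These functions biject in the usual way, exactly as for the pre-Lie operad of rooted trees (Chapoton--Livernet). The one new ingredient is the thick-edge nodes, which are unlabelled and never substituted into. They behave as inert internal vertices, so the bijection is unaffected. The parallel axiom holds because insertions at distinct labels $i\neq j$ touch disjoint sets of edges, so the two orders commute.

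\textbf{Generation.} I argue by induction on the number of labelled nodes, following the argument of Proposition~\ref{lower_bound_dimension}. The element $a\curvearrowright b$ is the two-node thin tree, and $\circ$-substituting trees into it realises the grafting product $\sigma\curvearrowright\tau$. The thick binary generator $\mu(a,b)$ realises the root node with two thick edges and no thin ones. A general Christoffel tree has a root that is either a labelled node with children $\tau_1,\dots,\tau_n$, or a thick node with thick children $\tau_1,\tau_2$ and thin children $\tau_3,\dots,\tau_m$. In both cases I peel off thin children using the identity \eqref{identity_graft}, written as $\tau_1\curvearrowright(\text{rest})$ minus terms with fewer root edges. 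This reduces to a root with no thin children, which is either a single label or $\mu(\tau_1,\tau_2)$, and both are generated by induction.

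\textbf{Isomorphism.} The map $\PL\vee\ComMag\to\NT$ sends the pre-Lie generator to $a\curvearrowright b$ and the commutative magmatic generator to the thick tree. It is well defined because the pre-Lie relation holds for grafting, since the associator $(a\curvearrowright b)\curvearrowright c-a\curvearrowright(b\curvearrowright c)$ grafts $a$ and $b$ at distinct nodes of $c$ and is therefore symmetric in $a,b$. It needs no relation from $\ComMag$, because the thick tree is symmetric by construction. Generation shows the map is surjective.

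For injectivity I compare dimensions arity by arity. A basis of the coproduct $\PL\vee\ComMag$ is given by alternating trees: rooted trees whose labelled vertices are grouped into ``pre-Lie blobs'' (rooted trees on labels) connected through binary commutative-magmatic vertices. I then exhibit a bijection between these and Christoffel trees on $S$. A thick node with its two thick children corresponds to a $\ComMag$ vertex. Its thin children, together with the thin subtrees attached to labelled nodes, give the pre-Lie part. The constraint $m\ge 2$ is exactly what makes thick nodes binary with extra pre-Lie grafting allowed.

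The main obstacle is setting up this normal form precisely. Thin edges can enter a thick node, which has no label, so the pre-Lie component is not simply a tree on labels. I would handle this by treating each thick node as a new ``label'' carrying a $\ComMag$ operation and proving the bijection via generating series: I would write the species equation $\NT=X\cdot E(\NT)+E_{\ge0}(\NT)\cdot E_2(\NT)$ and check that it matches the species of $\PL\vee\ComMag$, computed as the free $\PL$-algebra over the free $\ComMag$-algebra, iterated in the alternating fashion. If the species identity turns out awkward, the fallback is a rewriting argument. I would orient the pre-Lie relation as a rewriting rule into a confluent system, which gives a PBW-type basis of the coproduct, and then match that basis to the trees directly.
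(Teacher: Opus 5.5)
Your argument is correct. It goes further than the notes, which quote this proposition from \cite[Proposition 4.6]{BD24} without proving it. The notes only record why a morphism $\PL\vee\ComMag\to\NT$ exists: the two thin trees compose to grafting, which satisfies \eqref{pre_lie_ident}, and the thick tree is symmetric with no further identity imposed. They add the heuristic that a coproduct ``concatenates generators''. That sketch does not rule out additional relations in $\NT$. Your proposal supplies the missing steps: the operad axioms via the Chapoton--Livernet bijection (thick vertices only serve as targets for reattached thin edges), surjectivity via \eqref{identity_graft}, and injectivity by counting.

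The step you flag as the main obstacle closes at once, so you do not need the rewriting fallback. $\PL$ and $\ComMag$ are connected (identity in arity one, nothing in arity zero). Hence the coproduct has the alternating description $A=X+A_P+A_Q$, with $A_P=\overline{\PL}\circ(X+A_Q)$ and $A_Q=\overline{\ComMag}\circ(X+A_P)$, where the bar denotes the part of arity at least two. From $\overline{\ComMag}=E_2(\ComMag)$ you get $A_Q=E_2(\ComMag\circ(X+A_P))=E_2(A)$. From $\PL=X\cdot E(\PL)$ you get $A=\PL\circ(X+A_Q)=(X+E_2(A))\cdot E(A)$. This is exactly your equation for Christoffel trees. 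Concretely, a Christoffel tree is a thin rooted tree whose vertices are labels or thick vertices, each thick vertex carrying an unordered pair of Christoffel trees. Both sides are linearised set species vanishing on the empty set, so this recursion gives equal dimensions arity by arity, and surjectivity finishes the proof.

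Three small slips to correct.
\begin{enumerate}
\item $(a\curvearrowright b)\curvearrowright c-a\curvearrowright(b\curvearrowright c)$ equals minus the sum over grafting $a$ and $b$ onto arbitrary, not necessarily distinct, nodes of $c$. It is still symmetric in $a,b$.
\item In the parallel axiom, the two insertions can share an edge when $j$ is a child of $i$. The right justification is that they modify opposite ends of that edge, not that they touch disjoint edges.
\item Generation needs a double induction: first on the number of labels, then on the number of thin edges at the root. The correction terms in \eqref{identity_graft} have the same labels, so induction on labels alone does not reach them.
\end{enumerate}
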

		This proposition is taken from \cite[Proposition 4.6]{BD24}. The first two generators in \eqref{generators} are the generators of the pre-Lie operad $ \PL $. Indeed, if we compute the following partial compositions, one gets 
		\begin{equation*}
			(\begin{tikzpicture}[scale=0.2,baseline=-5]
				\coordinate (root) at (0,-1);
				\coordinate (center) at (0,1);
				\draw[] (root) -- (center);
				\node[var1] (rootnode) at (center) {\tiny{$ 2 $}};
				\node[var1] (rootnode) at (root) { \tiny{$  1 $}};
			\end{tikzpicture} \circ_1 \tau_1) \circ_2 \tau_2 = \tau_2 \curvearrowright  \tau_1
		\end{equation*}
		where $ \curvearrowright $ is the grafting product that we already introduced in the proof of Proposition \ref{lower_bound_dimension}. This product is pre-Lie as its satisfies the following identity
		\begin{equation} \label{pre_lie_ident}
			(\tau_1  \curvearrowright \tau_2 )   \curvearrowright \tau_3  - \tau_1  \curvearrowright (\tau_2   \curvearrowright \tau_3) = 	(\tau_2  \curvearrowright \tau_1 )   \curvearrowright \tau_3  - \tau_2  \curvearrowright (\tau_1   \curvearrowright \tau_3).
		\end{equation}
		The last generator of \eqref{generators} corresponds to the generator of $ \ComMag $. It is commuttative as one has 
		\begin{equation*}
			(\begin{tikzpicture}[scale=0.2,baseline=-5]
				\coordinate (root) at (0,-1);
				\coordinate (right) at (1,1);
				\coordinate (left) at (-1,1);
				\draw[kernels2] (root) -- (left);
				\draw[kernels2] (root) -- (right);
				\node[var1] (rootnode) at (left) {\tiny{$ 1 $}};
				\node[var1] (rootnode) at (right) {\tiny{$ 2 $}};
				\node[not] (rootnode) at (root) {};
			\end{tikzpicture} \circ_1 \tau_1) \circ_2 \tau_2  = 	(\begin{tikzpicture}[scale=0.2,baseline=-5]
				\coordinate (root) at (0,-1);
				\coordinate (right) at (1,1);
				\coordinate (left) at (-1,1);
				\draw[kernels2] (root) -- (left);
				\draw[kernels2] (root) -- (right);
				\node[var1] (rootnode) at (left) {\tiny{$ 1 $}};
				\node[var1] (rootnode) at (right) {\tiny{$ 2 $}};
				\node[not] (rootnode) at (root) {};
			\end{tikzpicture} \circ_1 \tau_2) \circ_2 \tau_1 
		\end{equation*}
		and is magmatic as we do not assume any additional identities for this product. Then, the coproduct of operads can be viewed as a concatenation of the lists of the generators. Using this formalism, one lifts the map $ \hat{\varphi}_{\geo} $ to  a map of species 
		\[
		\hat \Phi_{\geo}\colon \NT\to \NT\vee\mathbb{K} \, \begin{tikzpicture}[scale=0.2,baseline=-2]
			\coordinate (root) at (0,0);
			\node[diff] (rootnode) at (root) {};
		\end{tikzpicture}
		\]
		such that
		\begin{gather*}
			\hat \Phi_\geo \left( \begin{tikzpicture}[scale=0.2,baseline=-5]
				\coordinate (root) at (0,-1);
				\coordinate (center) at (0,1);
				\draw[] (root) -- (center);
				\node[var1] (rootnode) at (center) {\tiny{$ 2 $}};
				\node[var1] (rootnode) at (root) { \tiny{$  1 $}};
			\end{tikzpicture} \right) = \hat \Phi_\geo \left(  \begin{tikzpicture}[scale=0.2,baseline=-5]
				\coordinate (root) at (0,-1);
				\coordinate (center) at (0,1);
				\draw[] (root) -- (center);
				\node[var1] (rootnode) at (center) {\tiny{$ 1 $}};
				\node[var1] (rootnode) at (root) { \tiny{$  2 $}};
			\end{tikzpicture} \right) =
			\begin{tikzpicture}[scale=0.2,baseline=-5]
				\coordinate (root) at (0,-1);
				\coordinate (right) at (1,1);
				\coordinate (left) at (-1,1);
				\draw[] (root) -- (left);
				\draw[] (right) -- (root);
				\node[var1] (rootnode) at (left) {\tiny{$ 1 $}};
				\node[var1] (rootnode) at (right) {\tiny{$ 2 $}};
				\node[diff] (rootnode) at (root) {};
			\end{tikzpicture} ,\quad 
			\hat \Phi_\geo \left(\begin{tikzpicture}[scale=0.2,baseline=-5]
				\coordinate (root) at (0,-1);
				\coordinate (right) at (1,1);
				\coordinate (left) at (-1,1);
				\draw[kernels2] (root) -- (left);
				\draw[kernels2] (root) -- (right);
				\node[var1] (rootnode) at (left) {\tiny{$ 1 $}};
				\node[var1] (rootnode) at (right) {\tiny{$ 2 $}};
				\node[not] (rootnode) at (root) {};
			\end{tikzpicture} \right) = -2\begin{tikzpicture}[scale=0.2,baseline=-5]
				\coordinate (root) at (0,-1);
				\coordinate (right) at (1,1);
				\coordinate (left) at (-1,1);
				\draw[] (root) -- (left);
				\draw[] (right) -- (root);
				\node[var1] (rootnode) at (left) {\tiny{$ 1 $}};
				\node[var1] (rootnode) at (right) {\tiny{$ 2 $}};
				\node[diff] (rootnode) at (root) {};
			\end{tikzpicture}.
		\end{gather*} 
		The map $\hat \Phi_\geo$ can be viewed as a deformation of a natural derivation that appears in the operadic twisting. This procedure originates in the work of Willwacher \cite[Appendix~I]{W15} who introduced it in the context of Kontsevich's graph complexes. The work \cite{dotsenko2021homotopical} gives a precise description of  the operadic twisting $\Tw(\PL) = (\PL \vee \mathbb{K} \, \begin{tikzpicture}[scale=0.2,baseline=-2]
			\coordinate (root) at (0,0);
			\node[diff] (rootnode) at (root) {};
		\end{tikzpicture}, d_{\Tw} )$ of $ \PL $. It is a differential graded operad and $ \begin{tikzpicture}[scale=0.2,baseline=-2]
			\coordinate (root) at (0,0);
			\node[diff] (rootnode) at (root) {};
		\end{tikzpicture} $ is a Maurer-Cartan element. In the sense that one can split the derivation $  d_{\Tw} $ into 
		\begin{equation*}
			d_{\Tw}=d_{\MC}+\mathrm{ad}_{\ell_1^{\begin{tikzpicture}[scale=0.2,baseline=-2]
						\coordinate (root) at (0,0);
						\node[diff] (rootnode) at (root) {};
			\end{tikzpicture}}}.
		\end{equation*}
		The generator $ \begin{tikzpicture}[scale=0.2,baseline=-2]
			\coordinate (root) at (0,0);
			\node[diff] (rootnode) at (root) {};
		\end{tikzpicture} $ is a new operation of arity~$0$ and homological degree~$-1$. 
		The differential $d_{\MC}$ vanishes on $\PL$ and satisfies \begin{equation*}
			d_{\MC}(\begin{tikzpicture}[scale=0.2,baseline=-2]
				\coordinate (root) at (0,0);
				\node[diff] (rootnode) at (root) {};
			\end{tikzpicture})=-\frac12[\begin{tikzpicture}[scale=0.2,baseline=-2]
				\coordinate (root) at (0,0);
				\node[diff] (rootnode) at (root) {};
			\end{tikzpicture},\begin{tikzpicture}[scale=0.2,baseline=-2]
				\coordinate (root) at (0,0);
				\node[diff] (rootnode) at (root) {};
			\end{tikzpicture}]  := -\frac12 \begin{tikzpicture}[scale=0.2,baseline=-5]
				\coordinate (root) at (0,-1);
				\coordinate (center) at (0,1);
				\draw[] (root) -- (center);
				\node[diff] (rootnode) at (center) {\tiny{$ $}};
				\node[diff] (rootnode) at (root) { \tiny{$  $}};
			\end{tikzpicture}.
		\end{equation*} 
		The Lie bracket $ [\cdot, \cdot] $ is given on other elements by
		\begin{equation*}
			[\tau_1, \tau_2] = 	\begin{tikzpicture}[scale=0.2,baseline=-5]
				\coordinate (root) at (0,-1);
				\coordinate (center) at (0,1.5);
				\draw[] (root) -- (center);
				\node[var1] (rootnode) at (center) {\tiny{$ \tau_2 $}};
				\node[var1] (rootnode) at (root) { \tiny{$  \tau_1 $}};
			\end{tikzpicture} -
			\begin{tikzpicture}[scale=0.2,baseline=-5]
				\coordinate (root) at (0,-1);
				\coordinate (center) at (0,1.5);
				\draw[] (root) -- (center);
				\node[var1] (rootnode) at (center) {\tiny{$ \tau_1 $}};
				\node[var1] (rootnode) at (root) { \tiny{$  \tau_2 $}};
			\end{tikzpicture}, 
		\end{equation*} 
		which is simply the antisymmetrisation of the pre-Lie product. Identity  \eqref{pre_lie_ident} implies the Jacobi identity for the bracket $ [\cdot, \cdot] $.
		The element $\ell_1^{\alpha}$ is defined by
		\begin{equation*}
			\ell_1^{\begin{tikzpicture}[scale=0.2,baseline=-2]
					\coordinate (root) at (0,0);
					\node[diff] (rootnode) at (root) {};
			\end{tikzpicture}}(a_1)=[\begin{tikzpicture}[scale=0.2,baseline=-2]
				\coordinate (root) at (0,0);
				\node[diff] (rootnode) at (root) {};
			\end{tikzpicture},a_1] =  \begin{tikzpicture}[scale=0.2,baseline=-5]
				\coordinate (root) at (0,-1);
				\coordinate (center) at (0,1);
				\draw[] (root) -- (center);
				\node[diff] (rootnode) at (center) {\tiny{$ $}};
				\node[var1] (rootnode) at (root) { \tiny{\tiny{$  1 $}}};
			\end{tikzpicture} -
			\begin{tikzpicture}[scale=0.2,baseline=-5]
				\coordinate (root) at (0,-1);
				\coordinate (center) at (0,1);
				\draw[] (root) -- (center);
				\node[var1] (rootnode) at (center) {\tiny{\tiny{$ 1 $}}};
				\node[diff] (rootnode) at (root) { \tiny{$  $}};
			\end{tikzpicture}.
		\end{equation*}
		In the sequel, we will use the shorthand notation $ \ell_1^{\begin{tikzpicture}[scale=0.2,baseline=-2]
				\coordinate (root) at (0,0);
				\node[diff] (rootnode) at (root) {};
		\end{tikzpicture}} $ instead of $ \ell_1^{\begin{tikzpicture}[scale=0.2,baseline=-2]
				\coordinate (root) at (0,0);
				\node[diff] (rootnode) at (root) {};
		\end{tikzpicture}}(a_1) $.
		Then, one sets \begin{equation*}
			\mathrm{ad}_{\ell_1^{\begin{tikzpicture}[scale=0.2,baseline=-2]
						\coordinate (root) at (0,0);
						\node[diff] (rootnode) at (root) {};
			\end{tikzpicture}}}(\mu)=\ell_1^{\begin{tikzpicture}[scale=0.2,baseline=-2]
					\coordinate (root) at (0,0);
					\node[diff] (rootnode) at (root) {};
			\end{tikzpicture}}\circ_1\mu-(-1)^{|\mu|}\sum_i\mu\circ_i\ell_1^{\begin{tikzpicture}[scale=0.2,baseline=-2]
					\coordinate (root) at (0,0);
					\node[diff] (rootnode) at (root) {};
			\end{tikzpicture}}.
		\end{equation*} 
		where $ |\mu| $ is the homological degree of the element $\mu$.
		If we apply the differential $ 	d_{\Tw} $ to the generators of $ \PL $, one gets
		\begin{equation*}
			\begin{aligned}
				d_{\Tw} \left( \,   \begin{tikzpicture}[scale=0.2,baseline=-5]
					\coordinate (root) at (0,-1);
					\coordinate (center) at (0,1);
					\draw[] (root) -- (center);
					\node[var1] (rootnode) at (center) {\tiny{$ 2 $}};
					\node[var1] (rootnode) at (root) { \tiny{$  1 $}};
				\end{tikzpicture} \, \right)  & =
				\mathrm{ad}_{\ell_1^{\begin{tikzpicture}[scale=0.2,baseline=-2]
							\coordinate (root) at (0,0);
							\node[diff] (rootnode) at (root) {};
				\end{tikzpicture}}}\left( \begin{tikzpicture}[scale=0.2,baseline=-5]
					\coordinate (root) at (0,-1);
					\coordinate (center) at (0,1);
					\draw[] (root) -- (center);
					\node[var1] (rootnode) at (center) {\tiny{$ 2 $}};
					\node[var1] (rootnode) at (root) { \tiny{$  1 $}};
				\end{tikzpicture} \right)
				= \ell_1^{\begin{tikzpicture}[scale=0.2,baseline=-2]
						\coordinate (root) at (0,0);
						\node[diff] (rootnode) at (root) {};
				\end{tikzpicture}}\circ_1 \begin{tikzpicture}[scale=0.2,baseline=-5]
					\coordinate (root) at (0,-1);
					\coordinate (center) at (0,1);
					\draw[] (root) -- (center);
					\node[var1] (rootnode) at (center) {\tiny{$ 2 $}};
					\node[var1] (rootnode) at (root) { \tiny{$  1 $}};
				\end{tikzpicture}-(-1)^{0}\sum_i \begin{tikzpicture}[scale=0.2,baseline=-5]
					\coordinate (root) at (0,-1);
					\coordinate (center) at (0,1);
					\draw[] (root) -- (center);
					\node[var1] (rootnode) at (center) {\tiny{$ 2 $}};
					\node[var1] (rootnode) at (root) { \tiny{$  1 $}};
				\end{tikzpicture} \circ_i\ell_1^{\begin{tikzpicture}[scale=0.2,baseline=-2]
						\coordinate (root) at (0,0);
						\node[diff] (rootnode) at (root) {};
				\end{tikzpicture}}
				=
				\, \begin{tikzpicture}[scale=0.2,baseline=-5]
					\coordinate (root) at (0,-1);
					\coordinate (right) at (1,1);
					\coordinate (left) at (-1,1);
					\draw[] (root) -- (left);
					\draw[] (right) -- (root);
					\node[var1] (rootnode) at (left) {\tiny{$ 1 $}};
					\node[var1] (rootnode) at (right) {\tiny{$ 2 $}};
					\node[diff] (rootnode) at (root) {$  $};
				\end{tikzpicture}.
			\end{aligned}
		\end{equation*}
		This is due to the following computations
		\begin{equation*}
			\begin{aligned}
				\ell_1^{\begin{tikzpicture}[scale=0.2,baseline=-2]
						\coordinate (root) at (0,0);
						\node[diff] (rootnode) at (root) {};
				\end{tikzpicture}}\circ_1 \begin{tikzpicture}[scale=0.2,baseline=-5]
					\coordinate (root) at (0,-1);
					\coordinate (center) at (0,1);
					\draw[] (root) -- (center);
					\node[var1] (rootnode) at (center) {\tiny{$ 2 $}};
					\node[var1] (rootnode) at (root) { \tiny{$  1 $}};
				\end{tikzpicture} & =
				(\begin{tikzpicture}[scale=0.2,baseline=-5]
					\coordinate (root) at (0,-1);
					\coordinate (center) at (0,1);
					\draw[] (root) -- (center);
					\node[diff] (rootnode) at (center) {\tiny{$ $}};
					\node[var1] (rootnode) at (root) { \tiny{\tiny{$  1 $}}};
				\end{tikzpicture} -
				\begin{tikzpicture}[scale=0.2,baseline=-5]
					\coordinate (root) at (0,-1);
					\coordinate (center) at (0,1);
					\draw[] (root) -- (center);
					\node[var1] (rootnode) at (center) {\tiny{\tiny{$ 1 $}}};
					\node[diff] (rootnode) at (root) { \tiny{$  $}};
				\end{tikzpicture}) \circ_1 \begin{tikzpicture}[scale=0.2,baseline=-5]
					\coordinate (root) at (0,-1);
					\coordinate (center) at (0,1);
					\draw[] (root) -- (center);
					\node[var1] (rootnode) at (center) {\tiny{$ 2 $}};
					\node[var1] (rootnode) at (root) { \tiny{$  1 $}};
				\end{tikzpicture}
				=	 \begin{tikzpicture}[scale=0.2,baseline=-5]
					\coordinate (root) at (0,-1);
					\coordinate (center) at (0,1);
					\coordinate (centerc) at (0,3);
					\draw[] (root) -- (center);
					\draw[] (center) -- (centerc);
					\node[var1] (rootnode) at (center) {\tiny{$ 2 $}};
					\node[var1] (rootnode) at (root) { \tiny{$  1 $}};
					\node[diff] (rootnode) at (centerc) { \tiny{$  $}};
				\end{tikzpicture} + \begin{tikzpicture}[scale=0.2,baseline=-5]
					\coordinate (root) at (0,-1);
					\coordinate (center) at (1,1);
					\coordinate (centerc) at (-1,1);
					\draw[] (root) -- (center);
					\draw[] (root) -- (centerc);
					\node[var1] (rootnode) at (center) {\tiny{$ 2 $}};
					\node[var1] (rootnode) at (root) { \tiny{$  1 $}};
					\node[diff] (rootnode) at (centerc) { \tiny{$  $}};
				\end{tikzpicture} - \begin{tikzpicture}[scale=0.2,baseline=-5]
					\coordinate (root) at (0,1);
					\coordinate (center) at (0,3);
					\coordinate (centerc) at (0,-1);
					\draw[] (root) -- (center);
					\draw[] (center) -- (centerc);
					\node[var1] (rootnode) at (center) {\tiny{$ 2 $}};
					\node[var1] (rootnode) at (root) { \tiny{$  1 $}};
					\node[diff] (rootnode) at (centerc) { \tiny{$  $}};
				\end{tikzpicture}, \quad |\begin{tikzpicture}[scale=0.2,baseline=-5]
					\coordinate (root) at (0,-1);
					\coordinate (center) at (0,1);
					\draw[] (root) -- (center);
					\node[var1] (rootnode) at (center) {\tiny{$ 2 $}};
					\node[var1] (rootnode) at (root) { \tiny{$  1 $}};
				\end{tikzpicture}| = 0, \\ \begin{tikzpicture}[scale=0.2,baseline=-5]
					\coordinate (root) at (0,-1);
					\coordinate (center) at (0,1);
					\draw[] (root) -- (center);
					\node[var1] (rootnode) at (center) {\tiny{$ 2 $}};
					\node[var1] (rootnode) at (root) { \tiny{$  1 $}};
				\end{tikzpicture} \circ_1 \ell_1^{\begin{tikzpicture}[scale=0.2,baseline=-2]
						\coordinate (root) at (0,0);
						\node[diff] (rootnode) at (root) {};
				\end{tikzpicture}} & = 
				\begin{tikzpicture}[scale=0.2,baseline=-5]
					\coordinate (root) at (0,-1);
					\coordinate (center) at (0,1);
					\draw[] (root) -- (center);
					\node[var1] (rootnode) at (center) {\tiny{$ 2 $}};
					\node[var1] (rootnode) at (root) { \tiny{$  1 $}};
				\end{tikzpicture} \circ_1		(\begin{tikzpicture}[scale=0.2,baseline=-5]
					\coordinate (root) at (0,-1);
					\coordinate (center) at (0,1);
					\draw[] (root) -- (center);
					\node[diff] (rootnode) at (center) {\tiny{$ $}};
					\node[var1] (rootnode) at (root) { \tiny{\tiny{$  1 $}}};
				\end{tikzpicture} -
				\begin{tikzpicture}[scale=0.2,baseline=-5]
					\coordinate (root) at (0,-1);
					\coordinate (center) at (0,1);
					\draw[] (root) -- (center);
					\node[var1] (rootnode) at (center) {\tiny{\tiny{$ 1 $}}};
					\node[diff] (rootnode) at (root) { \tiny{$  $}};
				\end{tikzpicture})
				=
				\begin{tikzpicture}[scale=0.2,baseline=-5]
					\coordinate (root) at (0,-1);
					\coordinate (center) at (0,1);
					\coordinate (centerc) at (0,3);
					\draw[] (root) -- (center);
					\draw[] (center) -- (centerc);
					\node[var1] (rootnode) at (centerc) {\tiny{$ 2 $}};
					\node[var1] (rootnode) at (root) { \tiny{$  1 $}};
					\node[diff] (rootnode) at (center) { \tiny{$  $}};
				\end{tikzpicture} + \begin{tikzpicture}[scale=0.2,baseline=-5]
					\coordinate (root) at (0,-1);
					\coordinate (center) at (1,1);
					\coordinate (centerc) at (-1,1);
					\draw[] (root) -- (center);
					\draw[] (root) -- (centerc);
					\node[var1] (rootnode) at (center) {\tiny{$ 2 $}};
					\node[var1] (rootnode) at (root) { \tiny{$  1 $}};
					\node[diff] (rootnode) at (centerc) { \tiny{$  $}};
				\end{tikzpicture} - \begin{tikzpicture}[scale=0.2,baseline=-5]
					\coordinate (root) at (0,-1);
					\coordinate (center) at (0,1);
					\coordinate (centerc) at (0,3);
					\draw[] (root) -- (center);
					\draw[] (center) -- (centerc);
					\node[var1] (rootnode) at (centerc) {\tiny{$ 2 $}};
					\node[var1] (rootnode) at (center) { \tiny{$  1 $}};
					\node[diff] (rootnode) at (root) { \tiny{$  $}};
				\end{tikzpicture} - \begin{tikzpicture}[scale=0.2,baseline=-5]
					\coordinate (root) at (0,-1);
					\coordinate (center) at (1,1);
					\coordinate (centerc) at (-1,1);
					\draw[] (root) -- (center);
					\draw[] (root) -- (centerc);
					\node[var1] (rootnode) at (center) {\tiny{$ 2 $}};
					\node[var1] (rootnode) at (centerc) { \tiny{$  1 $}};
					\node[diff] (rootnode) at (root) { \tiny{$  $}};
				\end{tikzpicture}, \\ \begin{tikzpicture}[scale=0.2,baseline=-5]
					\coordinate (root) at (0,-1);
					\coordinate (center) at (0,1);
					\draw[] (root) -- (center);
					\node[var1] (rootnode) at (center) {\tiny{$ 2 $}};
					\node[var1] (rootnode) at (root) { \tiny{$  1 $}};
				\end{tikzpicture} \circ_2 \ell_1^{\begin{tikzpicture}[scale=0.2,baseline=-2]
						\coordinate (root) at (0,0);
						\node[diff] (rootnode) at (root) {};
				\end{tikzpicture}} & = 	\begin{tikzpicture}[scale=0.2,baseline=-5]
					\coordinate (root) at (0,-1);
					\coordinate (center) at (0,1);
					\draw[] (root) -- (center);
					\node[var1] (rootnode) at (center) {\tiny{$ 2 $}};
					\node[var1] (rootnode) at (root) { \tiny{$  1 $}};
				\end{tikzpicture} \circ_2		(\begin{tikzpicture}[scale=0.2,baseline=-5]
					\coordinate (root) at (0,-1);
					\coordinate (center) at (0,1);
					\draw[] (root) -- (center);
					\node[diff] (rootnode) at (center) {\tiny{$ $}};
					\node[var1] (rootnode) at (root) { \tiny{\tiny{$  1 $}}};
				\end{tikzpicture} -
				\begin{tikzpicture}[scale=0.2,baseline=-5]
					\coordinate (root) at (0,-1);
					\coordinate (center) at (0,1);
					\draw[] (root) -- (center);
					\node[var1] (rootnode) at (center) {\tiny{\tiny{$ 1 $}}};
					\node[diff] (rootnode) at (root) { \tiny{$  $}};
				\end{tikzpicture}) = \begin{tikzpicture}[scale=0.2,baseline=-5]
					\coordinate (root) at (0,-1);
					\coordinate (center) at (0,1);
					\coordinate (centerc) at (0,3);
					\draw[] (root) -- (center);
					\draw[] (center) -- (centerc);
					\node[var1] (rootnode) at (center) {\tiny{$ 2 $}};
					\node[var1] (rootnode) at (root) { \tiny{$  1 $}};
					\node[diff] (rootnode) at (centerc) { \tiny{$  $}};
				\end{tikzpicture} -	\begin{tikzpicture}[scale=0.2,baseline=-5]
					\coordinate (root) at (0,-1);
					\coordinate (center) at (0,1);
					\coordinate (centerc) at (0,3);
					\draw[] (root) -- (center);
					\draw[] (center) -- (centerc);
					\node[var1] (rootnode) at (centerc) {\tiny{$ 2 $}};
					\node[var1] (rootnode) at (root) { \tiny{$  1 $}};
					\node[diff] (rootnode) at (center) { \tiny{$  $}};
				\end{tikzpicture}.
			\end{aligned}
		\end{equation*}
		In the computation above, we have relabelled the nodes after the partial composition.
		We consider the derivation $d_0$ of the coproduct of operads $\PL\vee\ComMag\vee\mathbb{K} \, \begin{tikzpicture}[scale=0.2,baseline=-2]
			\coordinate (root) at (0,0);
			\node[diff] (rootnode) at (root) {};
		\end{tikzpicture}$ defined by
		\begin{gather*}
			d_0 \left( \begin{tikzpicture}[scale=0.2,baseline=-5]
				\coordinate (root) at (0,-1);
				\coordinate (center) at (0,1);
				\draw[] (root) -- (center);
				\node[var1] (rootnode) at (center) {\tiny{$ 2 $}};
				\node[var1] (rootnode) at (root) { \tiny{$  1 $}};
			\end{tikzpicture} \right) = d_0 \left(  \begin{tikzpicture}[scale=0.2,baseline=-5]
				\coordinate (root) at (0,-1);
				\coordinate (center) at (0,1);
				\draw[] (root) -- (center);
				\node[var1] (rootnode) at (center) {\tiny{$ 1 $}};
				\node[var1] (rootnode) at (root) { \tiny{$  2 $}};
			\end{tikzpicture} \right) =d_0(\begin{tikzpicture}[scale=0.2,baseline=-2]
				\coordinate (root) at (0,0);
				\node[diff] (rootnode) at (root) {};
			\end{tikzpicture})=
			0 ,\quad 
			d_0 \left(\begin{tikzpicture}[scale=0.2,baseline=-5]
				\coordinate (root) at (0,-1);
				\coordinate (right) at (1,1);
				\coordinate (left) at (-1,1);
				\draw[kernels2] (root) -- (left);
				\draw[kernels2] (root) -- (right);
				\node[var1] (rootnode) at (left) {\tiny{$ 1 $}};
				\node[var1] (rootnode) at (right) {\tiny{$ 2 $}};
				\node[not] (rootnode) at (root) {};
			\end{tikzpicture} \right) = -2\begin{tikzpicture}[scale=0.2,baseline=-5]
				\coordinate (root) at (0,-1);
				\coordinate (right) at (1,1);
				\coordinate (left) at (-1,1);
				\draw[] (root) -- (left);
				\draw[] (right) -- (root);
				\node[var1] (rootnode) at (left) {\tiny{$ 1 $}};
				\node[var1] (rootnode) at (right) {\tiny{$ 2 $}};
				\node[diff] (rootnode) at (root) {};
			\end{tikzpicture}.
		\end{gather*} 
		\begin{proposition} \label{correspondence_ker_phi_0}
			The map $\mathrm{d}= d_{\Tw} +d_0$ turns $\PL\vee\ComMag\vee\mathbb{K} \, \begin{tikzpicture}[scale=0.2,baseline=-2]
				\coordinate (root) at (0,0);
				\node[diff] (rootnode) at (root) {};
			\end{tikzpicture}$ into a differential graded operad. The degree zero homology of that operad is naturally isomorphic to $\ker\hat{\Phi}_{\geo}$.
		\end{proposition}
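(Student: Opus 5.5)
Two things have to be shown. First, $\mathrm{d}=d_{\Tw}+d_0$ is a differential: a derivation of degree $-1$ with $\mathrm{d}^2=0$. Second, the degree zero homology is isomorphic to $\ker\hat\Phi_\geo$.

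\emph{Differential.} The operad $\PL\vee\ComMag\vee\mathbb{K}\,\diffh$ is free modulo the pre-Lie relation and the commutativity of the $\ComMag$ generator. A derivation is therefore determined by its values on the three binary generators and on $\diffh$, provided it respects these relations. We already know from \cite{dotsenko2021homotopical} that $d_{\Tw}$ is a derivation with $d_{\Tw}^2=0$ on $\Tw(\PL)$. We extend $d_{\Tw}$ to the $\ComMag$ generator by the same formula $\mathrm{ad}_{\ell_1^{\diffh}}$. Since $\ComMag$ imposes no relation other than commutativity, which $\mathrm{ad}_{\ell_1^{\diffh}}$ visibly preserves, this extension is automatically compatible. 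The map $d_0$ kills the pre-Lie generators and $\diffh$, and is symmetric on the $\ComMag$ generator, so it also respects the relations.

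Expanding $\mathrm{d}^2=d_{\Tw}^2+(d_{\Tw}d_0+d_0d_{\Tw})+d_0^2$, it suffices to check vanishing on generators, because a square of an odd derivation is again a derivation.
\begin{itemize}
\item $d_0^2=0$: the image of $d_0$ consists of pre-Lie compositions with $\diffh$, and $d_0$ vanishes on those.
\item $d_{\Tw}^2=0$ on the pre-Lie generators and on $\diffh$: this is \cite{dotsenko2021homotopical}.
\item $d_{\Tw}^2$ on the $\ComMag$ generator: it equals $\mathrm{ad}_{d_{\MC}\ell_1^{\diffh}+\frac12[\ell_1^{\diffh},\ell_1^{\diffh}]}$, which vanishes by the Maurer--Cartan equation for $\diffh$.
\item The anticommutator on the $\ComMag$ generator: $d_{\Tw}d_0(\text{gen})=-2\,d_{\Tw}(\diffh\text{ with two thin children})$, and $d_0d_{\Tw}(\text{gen})=d_0(\mathrm{ad}_{\ell_1^{\diffh}}(\text{gen}))=\mathrm{ad}_{\ell_1^{\diffh}}(d_0\,\text{gen})$. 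These are equal up to sign and cancel by oddness. To see this, note that $d_0$ commutes with grafting $\ell_1^{\diffh}$, since $d_0$ vanishes on $\PL$ and on $\diffh$. The remaining term comes from $d_{\MC}$ acting on the corolla at $\diffh$; I would verify by hand that it equals $0$, using the explicit formula $d_{\MC}\diffh=-\frac12[\diffh,\diffh]$.
\end{itemize}
This last sign and coefficient check (the $-2$ against $\frac12$) is the fiddly step.

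\emph{Homology.} Grade by the number of $\diffh$ symbols, which is minus the homological degree. Degree $0$ is $\NT$, using Proposition~\ref{operad_christoffel}. Degree $-1$ is the part with exactly one $\diffh$, which is the target of $\hat\Phi_\geo$. There is nothing in positive degree, so $H_0=\ker(\mathrm{d}\colon \text{deg }0\to\text{deg }{-1})$. It thus remains to show that $\mathrm{d}$ restricted to arity-$n$ elements of $\NT$ coincides with $\hat\Phi_\geo$ under the identification. Both maps are derivations of the operadic structure: $\mathrm{d}$ by the first part, and $\hat\Phi_\geo$ by its construction in \cite{BD24} as the lift of $\hat\varphi_\geo$. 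They agree on the generators \eqref{generators}: on the pre-Lie generators $d_{\Tw}$ gives the computed corolla and $d_0$ gives $0$; on the $\ComMag$ generator $d_{\Tw}$ contributes terms that must be matched to the definition of $\hat\Phi_\geo$, and $d_0$ gives $-2$ times the corolla. Hence they agree everywhere.

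The main obstacle is this last identification. One must confirm that $\hat\Phi_\geo$, as defined via the change-of-coordinate derivative in \cite[Section 6.1]{BGHZ}, really is an operadic derivation whose value on the $\ComMag$ generator includes the $\mathrm{ad}_{\ell_1^{\diffh}}$ part, rather than only the $-2$ corolla displayed. I would derive this from the fact that $\hat\varphi_\geo=\phi_\geo-[\,\cdot\,,z_{(\diffh,0)}]$ splits the same way, as a twisting term plus a $d_0$-type term, as in the proof of Theorem~\ref{geo_chain_rule}.
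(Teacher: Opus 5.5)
Your overall plan matches the paper's. You check $\mathrm{d}^2=0$ on generators, note that nothing lives in positive degree so that $H_0$ is the kernel of $\mathrm{d}$ on the degree-zero part $\PL\vee\ComMag\cong\NT$, and identify that restriction with $\hat\Phi_\geo$ via Proposition~\ref{operad_christoffel}.

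\textbf{The gap: $d_{\Tw}$ on the $\ComMag$ generator.} Write $\alpha$ for the arity-zero triangle, $p$ for the pre-Lie generator, $m$ for the thick-edged generator, and $c$ for the corolla with root $\alpha$ and two thin leaves. Then $d_{\Tw}(p)=c$ and $d_0(m)=-2c$.

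You set $d_{\Tw}(m)=\mathrm{ad}_{\ell_1^{\alpha}}(m)$, i.e.\ you twist all of $\PL\vee\ComMag$. The intended $d_{\Tw}$ is the twisting differential of $\PL$ extended by \emph{zero} on $m$. This is why the paper writes $(\PL\vee\ComMag\vee\mathbb{K}\alpha,d_{\Tw})\cong\Tw(\PL)\vee\ComMag$ after Theorem~\ref{th:twisting}. With your choice, both steps you flagged fail.

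\emph{$\mathrm{d}^2(m)\neq 0$.} Since $c=d_{\Tw}(p)$, you get $d_{\Tw}d_0(m)=-2d_{\Tw}^2(p)=0$. But $d_0$ kills $\ell_1^{\alpha}$, hence anticommutes with $\mathrm{ad}_{\ell_1^{\alpha}}$, so $d_0d_{\Tw}(m)=-\mathrm{ad}_{\ell_1^{\alpha}}(d_0m)=2\,\mathrm{ad}_{\ell_1^{\alpha}}(c)=-2\,d_{\MC}(c)$. This is nonzero: it substitutes the two-vertex tree $d_{\MC}(\alpha)$ for the root of $c$ and redistributes the leaves, giving four distinct trees, none killed by the odd symmetry of the two $\alpha$'s. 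So the two cross terms are not equal up to sign: one is zero and the other is not. The ``remaining $d_{\MC}$ term'' you planned to check by hand is exactly what survives.

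\emph{$\mathrm{d}$ does not restrict to $\hat\Phi_\geo$.} $\mathrm{ad}_{\ell_1^{\alpha}}(m)$ is a nonzero sum of trees in which $\alpha$ is grafted on the root of $m$, placed below it, or inserted on a thick edge. None of these occurs in $\hat\Phi_\geo(m)=-2c$.

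\textbf{Your suggested remedy is false.} You propose that $\hat\Phi_\geo(m)$ secretly contains an $\mathrm{ad}$ part; the multi-index picture refutes this directly from Definition~\ref{def:injectivity_Upsilon}. Writing $z_{(\alpha,k)}$ for the triangle variables, one computes
$\hat\varphi_\geo(v_1Dv_2)=\hat\varphi_\geo(v_1)Dv_2+v_1D\hat\varphi_\geo(v_2)+z_{(\alpha,2)}v_1v_2$
and
$\hat\varphi_\geo(z_{(\derivatives,0)}v_1v_2)=z_{(\derivatives,0)}\hat\varphi_\geo(v_1)v_2+z_{(\derivatives,0)}v_1\hat\varphi_\geo(v_2)-2z_{(\alpha,2)}v_1v_2$.
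For example, $\hat\varphi_\geo(z_{(\generic,0)}^2z_{(\derivatives,0)})=-2z_{(\generic,0)}^2z_{(\alpha,2)}$, with no further terms. So $\hat\Phi_\geo$ is the derivation with values $c,c,-2c$ on the three generators.

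\textbf{The fix.} Setting $d_{\Tw}(m)=0$ repairs everything at once. $\mathrm{d}$ then takes the same values $c,c,-2c$ on the generators, so the paper's square commutes. And $\mathrm{d}^2=0$ becomes immediate on every generator, e.g.\ $\mathrm{d}^2(m)=-2\,d_{\Tw}(c)-2\,d_0(c)=-2\,d_{\Tw}^2(p)=0$, with no $d_{\MC}$ bookkeeping at all.
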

		
		We briefly explain what one needs to prove for the previous proposition.
		One first shows that $\mathrm{d}^2=0$. It is sufficient to prove it on the generators of the operad. We recall that the homological degree of $
		\begin{tikzpicture}[scale=0.2,baseline=-2]
			\coordinate (root) at (0,0);
			\node[diff] (rootnode) at (root) {};
		\end{tikzpicture}$ is $-1$, therefore the degree zero part of $\PL\vee\ComMag\vee\mathbb{K}\,\begin{tikzpicture}[scale=0.2,baseline=-2]
			\coordinate (root) at (0,0);
			\node[diff] (rootnode) at (root) {};
		\end{tikzpicture}$ is precisely $\PL\vee\ComMag$, and one has a commutative diagram 
		\[
		\begin{tikzcd}
			\NT \arrow[r, "\hat{\Phi}_{\geo}"] \arrow[d]
			& \NT\vee \mathbb{K}\,\begin{tikzpicture}[scale=0.2,baseline=-2]
				\coordinate (root) at (0,0);
				\node[diff] (rootnode) at (root) {};
			\end{tikzpicture}  \arrow[d] \\
			\PL\vee\ComMag \arrow[r, "\mathrm{d}"]
			& \PL\vee\ComMag\vee\mathbb{K}\,\begin{tikzpicture}[scale=0.2,baseline=-2]
				\coordinate (root) at (0,0);
				\node[diff] (rootnode) at (root) {};
			\end{tikzpicture}
		\end{tikzcd} ,
		\]
		with vertical arrows being isomorphisms given by Proposition~\ref{operad_christoffel}.  Before computing the homology of the previous differential graded operad, we recall the definition of 
		the operad $\LA$ of Lie-admissible algebras. It can be factorised as $ \Lie\vee\ComMag $ where $ \Lie $ is the operad of Lie algebras. Its precise  description can be found in \cite[Section 13.2]{LV12}. A basis is given by binary planar trees where the labels are on the leaves.
		\begin{theorem}\label{th:twisting}
			The homology of the operad 
			\[
			(\PL\vee\ComMag\vee \mathbb{K}\, \begin{tikzpicture}[scale=0.2,baseline=-2]
				\coordinate (root) at (0,0);
				\node[diff] (rootnode) at (root) {};
			\end{tikzpicture}, d_{\Tw}+d_0)
			\] 
			is concentrated in homological degree zero and is isomorphic to the operad $\LA$ of Lie-admissible algebras. 
		\end{theorem}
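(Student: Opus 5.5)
The plan is to separate the two pieces of the differential with a filtration and reduce to two known computations. The first is the homology of the operadic twisting $\Tw(\PL)$ from \cite{dotsenko2021homotopical}: $\PL$ is twistable, and the twisted operad has homology $\Lie$, concentrated in degree zero, via the natural map $\Lie\to\PL\to\Tw(\PL)$. The second is an elementary acyclicity statement for the extra piece $d_0$, which pairs the $\ComMag$ generator with the arity-two corolla decorated by $\diffh$. I would first record that $\mathrm{d}^2=0$, which the paper has already reduced to a check on generators. I would also note that $\mathrm{d}$ preserves arity and the total number of noise labels.

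Next, filter $\PL\vee\ComMag\vee\mathbb{K}\,\diffh$ by the number of $\ComMag$ generators (thick-edge pairs). The differential $d_{\Tw}$ does not create or destroy $\ComMag$ vertices. It only acts on pre-Lie structure and $\diffh$, except where $\mathrm{ad}_{\ell_1}$ hits a $\ComMag$ vertex, which inserts $\diffh$ nodes around it while keeping the vertex. The term $d_0$ strictly lowers the number of $\ComMag$ vertices by one. So $\mathrm{d}$ respects this increasing filtration. In each arity the filtration is bounded, so the spectral sequence converges. The associated graded differential is $d_{\Tw}$ acting on the free product of $\Tw(\PL)$ with the free operad on one commutative binary generator. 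Thinking of an element as a $\ComMag$-tree whose vertices and leaves are filled with elements of $\Tw(\PL)$, I would use a Künneth-type argument for coproducts of dg operads (valid in characteristic zero) to get a first page of the form $\Lie\vee\ComMag'$. Here $\ComMag'$ records the $\ComMag$ generators together with the $\diffh$-corollas that survive.

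I expect this step to be the main obstacle. The twisting differential does not act only on the $\Tw(\PL)$ factor, because $\ell_1$ also acts on the inputs and output of $\ComMag$ vertices. So the graded piece is not literally a coproduct of complexes. To handle this, I would first change variables by a filtered automorphism that conjugates away $\mathrm{ad}_{\ell_1}$ on the $\ComMag$ generator. Concretely, I would replace the $\ComMag$ generator $c$ by $c'=c+(\text{correction involving }\diffh)$ chosen so that $d_{\Tw}(c')=0$. Since $d_{\Tw}(c)=\mathrm{ad}_{\ell_1}(c)$ lies in the image of $d_{\Tw}$ in the twisting of the free operad, this correction should exist by the same argument that shows twistable operads are acyclic away from their Lie part.

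After that, on $E_1$ the remaining differential induced by $d_0$ sends $c'$ to $-2$ times the $\diffh$-decorated corolla class. I would check that this class is nonzero in $H(\Tw(\PL))$-with-inputs. Then in every homological degree other than $0$ the pairs cancel, leaving the free operad on the Lie bracket and one commutative magmatic product, which is $\Lie\vee\ComMag=\LA$. Because everything is concentrated in degree zero at this page, the spectral sequence degenerates. A final comparison, mapping $\LA$ into the degree-zero cycles through the covariant-derivative-type generators, identifies the answer as an operad and not only as a species.
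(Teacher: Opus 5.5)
Your skeleton matches the paper's: filter by the number of $\ComMag$ vertices (the paper counts thick edges, which $d_0$ lowers by two), treat $d_0$ as a perturbation of $d_{\Tw}$, and use the K\"unneth isomorphism $H_\bullet(\Tw(\PL)\vee\ComMag)\cong H_\bullet(\Tw(\PL))\vee\ComMag$ together with $H_\bullet(\Tw(\PL))\cong\Lie$. However, two steps are wrong as written. Write $c$ for the $\ComMag$ generator, $p$ for the pre-Lie generator with $1$ at the root and $2$ grafted on it, and $p'$ for the other pre-Lie generator. Write $\omega$ for the arity-zero generator of degree $-1$, and $\kappa$ for the arity-two corolla rooted at $\omega$.

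\textbf{The main obstacle does not exist.} On the coproduct, $d_{\Tw}$ is the derivation that equals the twisting differential on $\PL\vee\mathbb{K}\,\omega$ and vanishes on $c$. The paper says so explicitly, and it is forced by $\hat{\Phi}_{\geo}(c)=-2\kappa=d_0(c)$. So the associated graded is literally $\Tw(\PL)\vee\ComMag$, and no change of variables is needed. Under your reading, the fix would not even be available. The element $c$ has degree $0$, while anything containing $\omega$ has negative degree, so no homogeneous correction involving $\omega$ exists. Moreover, in arity two the only degree-zero element with one $\ComMag$ vertex is $c$ itself.

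\textbf{Your final step has the wrong mechanism and contradicts your own conclusion.} The paper computes $d_{\Tw}(p)=d_{\Tw}(p')=\kappa$, so $\kappa$ is a boundary and its class is zero. It must be zero: $H_\bullet(\Tw(\PL))$ is concentrated in degree $0$, and $\kappa$ has degree $-1$. Your planned check that this class is nonzero therefore fails. If it succeeded, $d_1$ would kill $[c]$, no $\ComMag$ factor would survive, and you would not obtain $\LA$.

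What actually happens is simpler. $E_1\cong\Lie\vee\ComMag$ sits entirely in degree $0$, and every $d_r$ lowers homological degree by one. Hence the spectral sequence degenerates at $E_1$. It converges because the filtration is finite in each fixed arity \emph{and} degree; in a fixed arity alone it is not, since $\omega$ can be inserted arbitrarily often. The class of $c$ survives and lifts to the genuine cycle $c+p+p'$, because $\mathrm{d}(c)=-2\kappa$.

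To identify the answer as an operad: nothing has positive degree, so the homology is $\ker\mathrm{d}$ in degree $0$, which is a suboperad. Consider the operad map $\LA\to\ker\mathrm{d}$ sending the Lie-admissible generator to the covariant derivative $p+\tfrac12 c$. Its antisymmetrisation is $p-p'$ and its symmetrisation is $p+p'+c$. This map induces the K\"unneth isomorphism on associated graded pieces, hence is an isomorphism. This is the content of the paper's remark that adding $d_0$ does not change the homology.
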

		The main observation of the proof is to notice that $ d_{\Tw}+d_0 $ is a deformation of $ d_{\Tw} $. Indeed, the derivation $ d_0 $ decreases the number of thick edges by two. Adding   $ d_0 $ does not change the homology. It remains to compute the homology with $  d_{\Tw} $.
		One has  \[	(\PL\vee\ComMag\vee\mathbb{K}\,\begin{tikzpicture}[scale=0.2,baseline=-2]
			\coordinate (root) at (0,0);
			\node[diff] (rootnode) at (root) {};
		\end{tikzpicture}, d_{\Tw})\cong \Tw(\PL)\vee\ComMag,
		\]
		which is due to the fact that the generator of $\ComMag$ is sent to zero by the derivation $ d_{\Tw} $.
		Then, if we denote the homological group of $\Tw(\PL)$ by	$H_\bullet(\Tw(\PL))$, one has
		\begin{equation*}
			\begin{aligned}
				\ker\hat{\Phi}_{\geo} \cong	H_\bullet(\Tw(\PL)\vee\ComMag)&\cong H_\bullet(\Tw(\PL))\vee\ComMag\\& \cong \Lie\vee\ComMag\cong\LA.
			\end{aligned}
		\end{equation*}
		where one have used  $H_\bullet(\Tw(\PL))\cong \Lie$ established in \cite{dotsenko2021homotopical}. One wants to identify generating binary operations for  $\ker\hat{\Phi}_{\geo}$.
		An operadic version of the covariant derivative \eqref{covariant_derivative_semi_linear} is given by 
		\begin{equation}
			\label{operad_covariant}
			\begin{tikzpicture}[scale=0.2,baseline=-5]
				\coordinate (root) at (0,-1);
				\coordinate (center) at (0,1);
				\draw[] (root) -- (center);
				\node[var1] (rootnode) at (center) {\tiny{$ 2 $}};
				\node[var1] (rootnode) at (root) { \tiny{$  1 $}};
			\end{tikzpicture} + \frac{1}{2} \begin{tikzpicture}[scale=0.2,baseline=-5]
				\coordinate (root) at (0,-1);
				\coordinate (right) at (1,1);
				\coordinate (left) at (-1,1);
				\draw[kernels2] (root) -- (left);
				\draw[kernels2] (root) -- (right);
				\node[var1] (rootnode) at (left) {\tiny{$ 1 $}};
				\node[var1] (rootnode) at (right) {\tiny{$ 2 $}};
				\node[not] (rootnode) at (root) {};
			\end{tikzpicture}.
		\end{equation}
		Indeed, one has
		\begin{equation*}
			\nabla_{\tau_1}\tau_2 = (( \begin{tikzpicture}[scale=0.2,baseline=-5]
				\coordinate (root) at (0,-1);
				\coordinate (center) at (0,1);
				\draw[] (root) -- (center);
				\node[var1] (rootnode) at (center) {\tiny{$ 2 $}};
				\node[var1] (rootnode) at (root) { \tiny{$  1 $}};
			\end{tikzpicture} + \frac{1}{2} \begin{tikzpicture}[scale=0.2,baseline=-5]
				\coordinate (root) at (0,-1);
				\coordinate (right) at (1,1);
				\coordinate (left) at (-1,1);
				\draw[kernels2] (root) -- (left);
				\draw[kernels2] (root) -- (right);
				\node[var1] (rootnode) at (left) {\tiny{$ 1 $}};
				\node[var1] (rootnode) at (right) {\tiny{$ 2 $}};
				\node[not] (rootnode) at (root) {};
			\end{tikzpicture}  ) \circ_1 \tau_1) \circ_2 \tau_2
		\end{equation*}
		where the order of the compositions $ \circ_1, \circ_2 $ does not matter.
		We have
		\begin{equation*}
			\mathrm{d} \left( \begin{tikzpicture}[scale=0.2,baseline=-5]
				\coordinate (root) at (0,-1);
				\coordinate (center) at (0,1);
				\draw[] (root) -- (center);
				\node[var1] (rootnode) at (center) {\tiny{$ 2 $}};
				\node[var1] (rootnode) at (root) { \tiny{$  1 $}};
			\end{tikzpicture} \right) = \mathrm{d} \left(  \begin{tikzpicture}[scale=0.2,baseline=-5]
				\coordinate (root) at (0,-1);
				\coordinate (center) at (0,1);
				\draw[] (root) -- (center);
				\node[var1] (rootnode) at (center) {\tiny{$ 1 $}};
				\node[var1] (rootnode) at (root) { \tiny{$  2 $}};
			\end{tikzpicture} \right) =  \mathrm{d} \left( -\frac{1}{2} \begin{tikzpicture}[scale=0.2,baseline=-5]
				\coordinate (root) at (0,-1);
				\coordinate (right) at (1,1);
				\coordinate (left) at (-1,1);
				\draw[kernels2] (root) -- (left);
				\draw[kernels2] (root) -- (right);
				\node[var1] (rootnode) at (left) {\tiny{$ 1 $}};
				\node[var1] (rootnode) at (right) {\tiny{$ 2 $}};
				\node[not] (rootnode) at (root) {};
			\end{tikzpicture} \right) = \begin{tikzpicture}[scale=0.2,baseline=-5]
				\coordinate (root) at (0,-1);
				\coordinate (right) at (1,1);
				\coordinate (left) at (-1,1);
				\draw[] (root) -- (left);
				\draw[] (right) -- (root);
				\node[var1] (rootnode) at (left) {\tiny{$ 1 $}};
				\node[var1] (rootnode) at (right) {\tiny{$ 2 $}};
				\node[diff] (rootnode) at (root) {$  $};
			\end{tikzpicture}.
		\end{equation*} 
		so the element \eqref{operad_covariant}
		is in the kernel of $\mathrm{d}$. Since we know that the homology operad is concentrated in degree zero and is generated by binary operations, this element is precisely the generator of $\LA$. So all elements of $\LA$  are obtained as linear combinations of iterations of covariant derivatives.
		Then, one obtains the main result of \cite[Theorem 1.6]{BD24}
		\begin{theorem}\label{th:LA-operadic}
			We have an operad isomorphism $ \ker\hat{\Phi}_{\geo} \cong \LA$. Moreover, that isomorphism identifies $\ker\hat{\Phi}_{\geo}$ as the linear span of iterations of covariant derivatives. 
		\end{theorem}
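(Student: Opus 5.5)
The plan is to assemble the theorem from Proposition~\ref{operad_christoffel}, Proposition~\ref{correspondence_ker_phi_0} and Theorem~\ref{th:twisting}, and then pin down the generator by the explicit computation with \eqref{operad_covariant}.

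\emph{Step 1: the isomorphism.} By Proposition~\ref{operad_christoffel}, $\NT \cong \PL\vee\ComMag$, and under this identification $\hat\Phi_\geo$ is the restriction of $\mathrm{d}=d_{\Tw}+d_0$ to the degree zero part. Since $\mathbb{K}\,\diffh$ has homological degree $-1$, the degree zero part of $\PL\vee\ComMag\vee\mathbb{K}\,\diffh$ is exactly $\PL\vee\ComMag$. It contains no degree $+1$ elements, so there are no boundaries in degree zero. Hence $H_0 = \ker \mathrm{d}|_{\deg 0}$, and Proposition~\ref{correspondence_ker_phi_0} gives $\ker\hat\Phi_\geo\cong H_0$. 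Theorem~\ref{th:twisting} then gives $H_0\cong \LA$.

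I also need $\ker\hat\Phi_\geo$ to be a suboperad, so that the isomorphism is one of operads. This follows because $\mathrm{d}$ is an operadic derivation: if $\mathrm{d}\mu=\mathrm{d}\nu=0$, then $\mathrm{d}(\mu\circ_i\nu)=0$. The resulting isomorphism is compatible with the operad structure because homology of a dg operad is an operad.

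\emph{Step 2: the generator.} $\LA\cong\Lie\vee\ComMag$ is generated by a single binary operation without symmetry; equivalently, one nonsymmetric binary product whose antisymmetrisation is Lie. The arity-two component of $\ker\hat\Phi_\geo$ has dimension $2$, as for $\LA$. The earlier computation shows that the element \eqref{operad_covariant} satisfies $\mathrm{d}(\cdot)=0$. Its image under the symmetric group action, namely the transposed covariant derivative, is also a cycle, and the two are linearly independent: one has the thin edge oriented $1\to 2$ and the other $2\to1$. So they span the arity-two kernel.

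I then need to check that \eqref{operad_covariant} corresponds to a free generator of $\LA$, not just some nonzero binary element. Its antisymmetrisation equals the antisymmetrised pre-Lie product, since the $\ComMag$ part cancels, so it satisfies Jacobi by \eqref{pre_lie_ident}. Its symmetrisation involves the $\ComMag$ generator, which is free of relations. In $\LA$, any nonsymmetric binary element whose symmetric and antisymmetric parts are both nonzero generates the whole operad: the symmetric part recovers the $\ComMag$ generator and the antisymmetric part recovers the $\Lie$ bracket, up to an invertible change of generators.

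\emph{Step 3: spanning.} Since $\LA$ is generated by binary operations and $\ker\hat\Phi_\geo\cong\LA$ as operads, every element of $\ker\hat\Phi_\geo$ is a linear combination of iterated partial compositions of \eqref{operad_covariant} and its transpose. Via $\nabla_{\tau_1}\tau_2 = ((\,\cdot\,)\circ_1\tau_1)\circ_2\tau_2$, these compositions are exactly iterated covariant derivatives.

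The main obstacle is Step 2. One must make sure the identification $H_0\cong\LA$ coming from Theorem~\ref{th:twisting} sends \eqref{operad_covariant} to an actual generator and not to a degenerate combination. I would settle this by the explicit arity-two dimension count together with the symmetric/antisymmetric decomposition argument above.
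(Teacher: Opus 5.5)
Your proposal is correct and follows essentially the same route as the paper. You identify $\ker\hat\Phi_\geo$ with the degree-zero homology via Proposition~\ref{correspondence_ker_phi_0} (there are no boundaries in degree zero), invoke Theorem~\ref{th:twisting} to get $\LA$, and use that \eqref{operad_covariant} is a $\mathrm{d}$-cycle to identify it as the generator. Your arity-two count makes precise the paper's one-line claim that this element ``is precisely the generator'': the cycle and its transpose span the two-dimensional $\ker\hat\Phi_\geo(2)\cong\LA(2)$, and this component generates $\LA$ because there are no relations in arity two, so the extra symmetric/antisymmetric discussion is harmless but not needed.
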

		One can compute the dimension of this geometric space using the generating series in \cite[Section 4.4]{BD24} for both the Gaussian and the non-Gaussian cases. Let us mention that with this operadic and homological approach, one recovers the result on multi-indices (see \cite[Remark 4.11]{BD24}). An important application of Theorem \ref{th:LA-operadic} arises in the context of quasi-linear SPDEs where it is easy to verify on iterated covariant derivatives that some counter-terms are local (see \cite[Sections 3.3, 3.4]{BGN24})

		We finish these lecture notes by briefly mentioning an important property one wants on the renormalisation constants for the chain rule symmetry. The counter-terms of the geometric KPZ equation are given by
		\begin{equation*}
			F_{\Gamma,\sigma}(C_{\varepsilon}^g)(u_{\varepsilon}), \quad 	C_{\varepsilon}^g = 	\sum_{\tau \in \mathcal{T}_{\text{\tiny{Chr}}}^g} C_{\varepsilon}(\tau) \tau
		\end{equation*}
		where $ \mathcal{T}_{\text{\tiny{Chr}}}^g $ are the Chirstoffel trees with negative degree and with pairings over the noises. Then, one decomposes this counter-term into
		\begin{equation*}
			C_{\varepsilon}^g = C_{\varepsilon}^{\geo} + C_{\varepsilon}^{\ngeo}
		\end{equation*}
		with $C_{\varepsilon}^{\geo} \in V_{\geo}$ and $C_{\varepsilon}^{\ngeo} \in V_{\geo}^{\perp}$. Then, one wants $ C_{\varepsilon}^{\ngeo} $ to converge to a finite limit independent of the mollifier $\varrho$. One can find elements in the renormalisation group parametrised by the $ C_{\varepsilon} $ such that
		the counter-terms take values only in $V_{\geo}$. This is proved in \cite[Proposition 3.9]{BGHZ} with the following main idea. One proceeds via the absurd assuming that $ C_{\varepsilon}^{\ngeo} $ is not bounded. One can then introduce a  sequence $(r_{\varepsilon})_{\varepsilon \geq 0}$ converging to zero such that  $ r_{\varepsilon}^k C_{\varepsilon}^{\ngeo,k} $ converges to a finite limit. Here, $k$ denotes the number of noises and one has $ C_{\varepsilon}^{\ngeo} = \sum_{k \in \mathbb{N}} C_{\varepsilon}^{\ngeo,k} $
		\begin{itemize}
			\item One multiplies the space-white noise with the sequence $(r_{\varepsilon})_{\varepsilon \geq 0}$. This turns out to change $ \sigma$ into $ r_{\varepsilon} \sigma $. Then 
			\begin{equation*}
				F_{\Gamma, r_{\varepsilon} \sigma}(C_{\varepsilon}^{\ngeo}) =  \sum_{k \in \mathbb{N}} 	F_{\Gamma,  \sigma}(r_{\varepsilon}^kC_{\varepsilon}^{\ngeo,k} )
			\end{equation*}
			converges to a finite limit.
			\item One passes to the limit on $\varepsilon$ going to the deterministic dynamic which is geometric. The remaining counter-terms in $ V_{\geo}^{\perp}$ are force to be zero via an injectivity in law of the solution (see \cite[Theorem 3.5]{BGHZ}).
		\end{itemize}
		


\end{document}